\newcounter{protocol}
\newcommand{\pushright}[1]{\ifmeasuring@#1\else\omit\hfill$\displaystyle#1$\fi\ignorespaces}
\newcommand{\pushleft}[1]{\ifmeasuring@#1\else\omit$\displaystyle#1$\hfill\fi\ignorespaces}
\providecommand{\keywords}[1]
{
\textbf{{Keywords:}} #1
}
\providecommand{\AMSclass}[1]
{
\textbf{{Mathematics Subject Classification:}} #1
}
\title{Restarts subject to approximate sharpness:\\A parameter-free and optimal scheme for first-order methods}
\author{Ben Adcock, Matthew J. Colbrook, Maksym Neyra-Nesterenko}
\date{}
\begin{document}

\maketitle

\footnotetext[2]{\textit{Corresponding author:} m.colbrook@damtp.cam.ac.uk\\\indent{}DAMTP, Centre for Mathematical Sciences, University of Cambridge, UK}

\begin{abstract}
Sharpness is an almost generic assumption in continuous optimization that bounds the distance from minima by objective function suboptimality. It facilitates the acceleration of first-order methods through \textit{restarts}. However, sharpness involves problem-specific constants that are typically unknown, and restart schemes typically reduce convergence rates. Moreover, these schemes are challenging to apply in the presence of noise or with approximate model classes (e.g., in compressive imaging or learning problems), and they generally assume that the first-order method used produces feasible iterates. We consider the assumption of \textit{approximate sharpness}, a generalization of sharpness that incorporates an unknown constant perturbation to the objective function error. This constant offers greater robustness (e.g., with respect to noise or relaxation of model classes) for finding approximate minimizers. By employing a new type of search over the unknown constants, we design a restart scheme that applies to general first-order methods and does not require the first-order method to produce feasible iterates. Our scheme maintains the same convergence rate as when the constants are known. The convergence rates we achieve for various first-order methods match the optimal rates or improve on previously established rates for a wide range of problems. We showcase our restart scheme in several examples and highlight potential future applications and developments of our framework and theory.
\end{abstract}

\vspace{2mm}

\keywords{First-order methods, Restarting and acceleration, Approximate sharpness, Convex optimization, Convergence rates, Inverse problems}

\AMSclass{65K0, 65B99, 68Q25, 90C25, 90C60}

\section{Introduction}

First-order methods are the workhorse of much of modern continuous optimization \cite{chambolle2016introduction,beck2017first,aster2018parameter,nesterov2018lectures}. These methods are widely used to solve large-scale problems due to their excellent scalability and ease of implementation. However, standard first-order methods often converge slowly, for instance, when applied to non-smooth objective functions or functions lacking strong convexity. This limitation has motivated extensive research aimed at speeding up these methods \cite{nesterov1983method,nesterov2003introductory,beck2009fast,kim2016optimized,renegar2016efficient,renegar2019accelerated,d2021acceleration}.

Recently, there has been significant interest in using \textit{restarts} to accelerate the convergence of first-order methods \cite{becker2011templates,nesterov2013gradient,iouditski2014primal,lin2014adaptive,su2014differential,giselsson2014monotonicity,o2015adaptive,fercoq2016restarting,freund2018new,necoara2019linear,fercoq2019adaptive,kerdreux2019restarting,roulet2020sharpness,roulet2020computational,renegar2021simple,colbrook2021warpd,adcock2022efficient,neyra-nesterenko2022nestanets,kerdreux2022restarting}. A restart scheme involves repeatedly using the output of an optimization algorithm as the initial point for a new instance, or ``restart''. Before executing this new instance, the scheme may also adjust the algorithm's parameters. Under the right conditions, the objective function error and the feasibility gap decay faster with the restarted scheme than with the standard (non-restarted) first-order method.

In this work, we relax previous assumptions and introduce a new restart scheme with optimal rates. This scheme applies to a broad class of convex optimization problems, generalizes and improves upon various existing schemes, and achieves optimal complexity bounds for a wide range of problems. Moreover, our scheme is parameter-free, up to the parameters used by the first-order method employed in our scheme.

\subsection{The problem}
\label{sec:prob_setup_intro}

We consider the general convex optimization problem
\ea{
\min_{x \in Q} f(x), \label{eqn:cvx-problem}
}
where $f : D \rightarrow \bbR$ is a proper, closed convex function with non-empty effective domain $D \subseteq \bbC^n$, and $Q \subseteq \bbC^n$ is a closed, convex set with $Q\subset D$. Let $\hat{f}$ denote the optimal value of \eqref{eqn:cvx-problem} and $\widehat{X}$ denote the set of minimizers of $f$ over $Q$, where we assume that $\widehat{X}$ is non-empty.

Our key assumption is that $f$ satisfies the following \textit{approximate sharpness condition}
\ea{
d(x, \widehat{X}) \leq \left( \frac{f(x) - \hat{f} + g_Q(x) + \eta}{\alpha} \right)^{1/\beta} \qquad \forall x \in D, \label{eqn:sharpness}
}
for a metric $d$ on $\bbC^n$ and some constants $\alpha > 0$, $\beta \geq 1$, $\eta \geq 0$.  We slightly abuse notation by defining $d(x, S) := \inf_{z \in S} d(x, z)$ for a set $S \subseteq \bbC^n$. Here, $g_Q : D \rightarrow \bbR_+$ is a function with
\eas{
    g_Q(x) = 0 \iff x \in Q
}
such that if a sequence $\{x_m\} \subset D$ has $\lim_{m\rightarrow\infty}d(x_m, Q) = 0$, then $\lim_{m\rightarrow\infty}g_{Q}(x_m) =0$. We assume that the function $g_{Q}$ is known but that suitable constants $\eta$, $\alpha$, and $\beta$ (or a subset thereof) are unknown. We refer to $g_Q$ as the \textit{feasibility gap} function and $f - \hat{f}$ as the \textit{objective (function) error}. 

To develop a restart scheme that accelerates an optimization algorithm for solving \eqref{eqn:cvx-problem}, we assume that $f$ satisfies \eqref{eqn:sharpness}. We also assume access to an optimization algorithm, $\Gamma : \bbR_{++} \times \bbR_{++} \times D \rightarrow D$, which maps $(\delta, \epsilon,  x_0)$ to $x$ such that
\be{
\label{Gamma-delta-to-eps}
    d(x_0, \widehat{X}) \leq \delta \quad \implies \quad f(x) - \hat{f} + g_Q(x) \leq \epsilon,\ \text{where }x = \Gamma(\delta,\epsilon,x_0).
}
In essence, if the initial value $x_0$ is within a distance $\delta$ of an optimal solution, the algorithm outputs an $x$ that is $\epsilon$-suboptimal, meaning $f(x) - \hat{f} \leq \epsilon$, and $\epsilon$-feasible, meaning $g_Q(x) \leq \epsilon$, for \eqref{eqn:cvx-problem}. The assumption \eqref{Gamma-delta-to-eps} is a standard condition found in typical convergence analyses of first-order methods. The algorithms $\Gamma$ considered in this paper are iterative. We define the \textit{cost function} 
\bes{
\cC_\Gamma : \bbR_{++}  \times \bbR_{++} \rightarrow \bbN,
}
where $\cC_\Gamma (\delta, \epsilon)$ represents an upper bound on the number of iterations required by $\Gamma$ to compute $x = \Gamma(\delta, \epsilon, x_0)$ for any initial value $x_0$ satisfying $d(x_0,\widehat{X}) \leq \delta$. This framework can be extended to include cost in terms of floating-point operations or other measures of time complexity. It is assumed that $\cC_\Gamma$ is non-decreasing in its first argument and non-increasing with respect to its second. In~\cref{sec:FOM_examples}, we describe various examples of first-order optimization methods that satisfy \cref{Gamma-delta-to-eps} and analyze their cost functions. See also \cite{renegar2021simple}.

Our restart scheme decreases the sum of the objective and feasibility gap functions after each restart. Moreover, we only use \R{eqn:sharpness} in our analysis each time we restart. As a result, \R{eqn:sharpness} can be relaxed and we only need \R{eqn:sharpness} to hold within the sublevel set \textit{sublevel} set
$
\{ x \in D : f(x) + g_Q(x) \leq f(x_0) + g_Q(x_0) \}
$
for a starting vector $x_0 \in D$.

\subsection{Motivations} \label{sec:motivations}

The assumption \eqref{eqn:sharpness} is considerably weaker than typical assumptions for acceleration, such as strong convexity. It can be considered an \textit{approximate} version of the sharpness condition considered in \cite{roulet2020sharpness} (see \eqref{loja}). We discuss its connections to other error bounds in \cref{sec:prev_work}. There are two principal distinctions between \eqref{eqn:sharpness} and sharpness. First, we do not assume that the sharpness condition is exact; instead, we include an additional term $\eta\geq 0$ that controls the approximation. This adjustment is crucial in many applications and dealing with noisy data. It also provides greater \textit{robustness} of our results. For instance, in the context of sparse recovery, \eqref{eqn:sharpness} covers both \textit{noisy} measurements and \textit{approximately} sparse vectors \cite{colbrook2021warpd}, which is more realistic than exact sparse recovery from noiseless measurements. We further discuss this problem in \cref{sec:sparse-recovery}. Second, our approach does not require that the iterates of our algorithm be feasible, thanks to the additional feasibility gap function $g_Q$. This provides added flexibility and efficiency in selecting the first-order method for the restart scheme, such as the primal-dual algorithm considered in \cref{sec:PD_constrained_alg}.

The other key motivation for this work is that we do not assume prior knowledge of the constants $\alpha$, $\beta$, and $\eta$. When these parameters are known, deriving a restart scheme is relatively straightforward. However, these constants are rarely known in practice. For instance, although sharpness is established for general subanalytic convex functions \cite{bolte2007lojasiewicz}, the proof relies on topological arguments that are not constructive. Another example can be seen in the sparse recovery problem, which we will discuss next. In some cases, approximate bounds for one or more of these constants may be available. However, if these bounds are loose -- particularly global ones, which can be overly pessimistic near minimizers -- they can lead to inefficient schemes. Our method eliminates the necessity for such precise bounds but still accommodates the inclusion of prior information about the constants (e.g., exact values or ranges) if available.

\subsection{Example: sparse recovery}\label{sec:sparse-recovery}

To illustrate these motivations, consider the classical sparse recovery problem of reconstructing an approximately sparse vector $x \in \bbR^n$ from a small collection of noisy linear measurements $y = A x + e \in \bbR^m$, where $m \ll n$. As discussed in \cref{sec:motivations}, in practice, $x$ is not exactly sparse, but \textit{approximately sparse}. This is usually quantified by the best $s$-term approximation error
$$
\sigma_s(x)_{\ell^1} = \min \{ \nmu{u - x}_{\ell^1} : u \in \bbR^n, \text{$u$ $s$-sparse} \},
$$
where $1 \leq s \leq n$ is the sparsity.
It is also typical to consider noisy measurements, where the noise vector $e$ satisfies $\nmu{e}_{\ell^2} \leq \varsigma$ for some noise level $0 < \varsigma \ll \infty$. In this practical setting of approximate sparsity and noisy measurements, it is generally impossible to recover $x$ exactly from its measurements $y$. Rather, the goal is to recovery $x$ \textit{accurately} and \textit{stably}, i.e., up to an error scaling linearly in $\sigma_{s}(x)_{\ell^1}$ and $\varsigma$ \cite{adcock2021compressive,foucart2013invitation}. A standard means to do this involves solving the Quadratically-Constrained Basis Pursuit (QCBP) problem
\ea{
\min_{z \in \bbR^n} \nmu{z}_{\ell^1} \ \text{subject to} \ \nmu{Az-y}_{\ell^2} \leq \varsigma.
\label{eqn:QCBP}
}
The objective of compressed sensing theory is then to derive conditions on $A$ (in terms of $m$ and $s$) that ensure any minimizer $\hat{x}$ of \eqref{eqn:QCBP} recovers $x$ accurately and stably.
One such condition is the \textit{robust Null Space Property (rNSP)} in Definition \ref{def:rNSP} (this condition is implied by, and is, therefore, weaker than, the well-known \textit{Restricted Isometry Property (RIP)}). Later, in Proposition \ref{prop:sharpness_of_QCBP}, we show that the rNSP implies accurate and stable recovery and approximate sharpness for \eqref{eqn:QCBP}. If $A$ has the rNSP of order $s$ with constants $0 < \rho < 1$ and $\gamma > 0$, then, firstly, any minimizer $\hat{x}$ of \eqref{eqn:QCBP} satisfies
\be{
\label{acc-stable-rec}
\nm{x - \hat{x}}_{\ell^2} \leq \hat{c}_1 \frac{\sigma_s(x)_{\ell^1}}{\sqrt{s}} + \hat{c}_2 \varsigma,
}
(see, e.g., \cite[Thm.\ 5.17]{adcock2021compressive})
and secondly, the approximate sharpness condition \eqref{eqn:sharpness} holds for \eqref{eqn:QCBP} with
$$
g_{Q}(z ) = \sqrt{s} \max \{ \nm{A z - y}_{\ell^2} - \varsigma , 0 \}, \quad \alpha = \hat{c}_3\sqrt{s}, \quad \beta = 1, \quad \eta = \hat{c}_4 \sigma_s(x)_{\ell^1} + \hat{c}_5 \varsigma\sqrt{s} ,
$$
where $\hat{c}_1,\hat{c}_2,\hat{c}_3,\hat{c}_4,\hat{c}_5 > 0$ are constants depending on $\rho$ and $\gamma$ only (see \cite[Theorem 3.3]{colbrook2021warpd}, as well as \cref{prop:sharpness_of_QCBP}).

This simple example illustrates the main motivations for this paper. First, \eqref{eqn:QCBP} satisfies approximate sharpness under \textit{exactly the same} conditions that imply accurate and stable recovery \eqref{acc-stable-rec} via its minimizers. Second, the approximate sharpness parameter $\eta$ is the same (up to possible constants) as the error bound \eqref{acc-stable-rec}. Therefore, it is acceptable to solve \eqref{eqn:QCBP} only down to an error proportional to $\eta$. Third, and most crucially, the approximate sharpness constants are typically unknown. In this example, $\alpha$ and $\eta$ depend on the rNSP constants $\rho$ and $\gamma$, amongst other factors. However, in general, given $s$ and $A$, computing $\rho$ and $\gamma$ is well-known to be NP-hard. The constant $\eta$ also depends on $\sigma_{s}(x)_{\ell^1}$, which is generally also unknown.

\subsection{Contributions}

Our main contribution is a restart scheme, detailed in \cref{alg:restart-unknown-constsC}, which operates under the approximate sharpness condition \eqref{eqn:sharpness} when the constants $\alpha$, $\beta$ and $\eta$ (or any subset thereof) are unknown. In the most general case where all three constants are unknown, our approach relies on several parameters: \textit{bases} $a,b > 1$, a \textit{scale factor} $r \in (0,1)$, \textit{estimates} $\alpha_0 > 0$, $\beta_0 \geq 1$, and a so-called \textit{schedule criterion function}. The method employs a logarithmic grid search over $\alpha$ and $\beta$ using the bases $a,b$, estimates $\alpha_0,\beta_0$, and the schedule criterion function to determine the order through which the grid is searched.  The scale factor $r$ is used to adjust the parameters of the first-order method at each restart, ensuring rapid convergence. We will provide full details of this scheme in \cref{sec:restart-unknown-constants}, but now let us present our main results for it.

\begin{theorem}
\label{thm:MAIN}
Suppose that $f$ satisfies \eqref{eqn:sharpness} for some unknown constants $\alpha$, $\beta$ and $\eta$. 
Consider \cref{alg:restart-unknown-constsC} for fixed $a, b > 1$, $0 < r<1$, $\alpha_0 > 0$, $\beta_0 \geq 1$ and schedule criterion function as in \cref{rad_ord_cor1} (unknown $\alpha$ and $\beta$), \cref{known_alpha} (known $\alpha$, unknown $\beta$) or \cref{known_beta} (unknown $\alpha$, known $\beta$). Then running \cref{alg:restart-unknown-constsC} with
$$
t \gtrsim K(\varepsilon),\quad \varepsilon \rightarrow 0^{+},
$$
(total inner) iterations, where $K(\varepsilon)$ is given in \eqref{L-eps-def},
implies that
$$
f(x^{(t)}) - \hat{f} + g_Q(x^{(t)}) \leq \max \{ \eta , \varepsilon \}.
$$
Let $\beta_*=b^{\lceil \log_b(\beta/\beta_0) \rceil}\beta_0$. If, in addition, $\cC_\Gamma$ satisfies
\begin{equation}
\label{known_const_C_assump}
\cC_\Gamma(\delta, \epsilon) \leq {C \delta^{d_1}}/{\epsilon^{d_2}}+1, \qquad C,d_1,d_2 > 0,
\end{equation}
for all $\delta, \epsilon > 0$, then
\begin{equation}
\label{sum_asy_rat}
K(\varepsilon)\leq\hat{C}
\begin{cases}
\epsilon_0^{d_1/\beta_*-d_2}\left\lceil{\log(\epsilon_0/\varepsilon)} \right\rceil,\quad &\text{if $d_2\leq d_1/\beta_*$},\\
\varepsilon^{d_1/\beta_*-d_2}\left\lceil{\log(\epsilon_0/\varepsilon)} \right\rceil,\quad &\text{if $d_2>d_1/\beta_*$},
\end{cases}
\end{equation}
where $\hat{C}$ is independent of $\varepsilon$ (but depends on $r,a,b,\alpha,\beta_*,\alpha_0,\beta_0,d_1$ and $d_2$). Explicit forms for $\hat{C}$ in~\eqref{sum_asy_rat} are given in \cref{sec:restart-unknown-constants}.
\end{theorem}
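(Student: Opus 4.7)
The plan is to split the argument into an \emph{oracle} analysis, in which the grid-rounded values $\alpha_\ast = a^{\lceil \log_a(\alpha/\alpha_0)\rceil}\alpha_0$ and $\beta_\ast$ are assumed known in advance, and a \emph{search} analysis showing that the schedule criterion function from \cref{rad_ord_cor1}, \cref{known_alpha} or \cref{known_beta} inflates the total inner-iteration count by at most a constant factor.

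For the oracle step, since $\beta_\ast \geq \beta$ and the analogous grid rounding holds for $\alpha$, the approximate sharpness inequality \eqref{eqn:sharpness} yields, on the sublevel set reached by the iterates, a usable upper bound on $d(x,\widehat{X})$ in terms of $(f(x) - \hat f + g_Q(x) + \eta)/\alpha_\ast$ raised to the power $1/\beta_\ast$. The standard restart induction then applies: set $\epsilon_k := r^k \epsilon_0$, choose the input radius $\delta_k := (\epsilon_k/\alpha_\ast)^{1/\beta_\ast}$, and call $\Gamma(\delta_k, \epsilon_{k+1}, x_k)$; by sharpness and \eqref{Gamma-delta-to-eps}, $f(x_{k+1}) - \hat f + g_Q(x_{k+1}) \leq \epsilon_{k+1}$ as long as $\epsilon_k \geq \eta$. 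The induction terminates once $\epsilon_k \leq \max\{\eta,\varepsilon\}$, which needs at most $\lceil \log(\epsilon_0/\varepsilon)/\log(1/r)\rceil$ restarts. Substituting $(\delta_k, \epsilon_{k+1})$ into \eqref{known_const_C_assump} gives a per-restart cost of order $\epsilon_k^{d_1/\beta_\ast - d_2}$ plus an additive $1$. Summing over $k$ yields the two cases of \eqref{sum_asy_rat}: the geometric series is dominated by its first (largest) term when $d_2 \leq d_1/\beta_\ast$, and by its last (smallest) term when $d_2 > d_1/\beta_\ast$; the additive $+1$ per restart accumulates into the $\lceil \log(\epsilon_0/\varepsilon)\rceil$ factor.

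For the search step, when the current guess $(\alpha',\beta')$ is incompatible with the true sharpness, the target geometric progress $\epsilon_{k+1} = r\epsilon_k$ cannot be realized by $\Gamma$, and the schedule criterion function detects this failure through an observable test on the output of $\Gamma$, discarding the guess after a controlled number of inner iterations. The traversal orders prescribed by \cref{rad_ord_cor1}, \cref{known_alpha} and \cref{known_beta} are designed so that the costs of all visited (discarded) nodes sum geometrically and are dominated by a constant multiple of the oracle cost at $(\alpha_\ast,\beta_\ast)$. Composing with the oracle bound gives \eqref{sum_asy_rat}, with the explicit form of $\hat C$ as derived in \cref{sec:restart-unknown-constants}.

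The main obstacle is the search step. A ``bad'' grid node can a priori consume many inner iterations before its inconsistency with \eqref{eqn:sharpness} becomes detectable, and the two-dimensional grid over $(\alpha,\beta)$ amplifies this danger; moreover, the non-monotonicity of $x \mapsto x^{1/\beta}$ in $\beta$ (depending on whether $x < 1$ or $x \geq 1$) means the ``safe'' direction on the $\beta$-axis is not immediate and depends on the current error regime. Designing the criterion function so that (a) no ``good'' node is discarded prematurely, (b) every ``bad'' node is caught within a budget commensurate with the oracle cost, and (c) the traversal order makes the cumulative overhead sum geometrically rather than polynomially in the number of grid points visited, is the real content of \cref{rad_ord_cor1} and its partial-information analogues. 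Once those scheduling facts are available, combining them with the oracle estimate to obtain \eqref{sum_asy_rat} is routine bookkeeping, and the only remaining care is in tracking how $a, b, r, \alpha_0, \beta_0, d_1, d_2$ enter $\hat C$.
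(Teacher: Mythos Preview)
Your oracle step is roughly right in spirit, though two details are off: the paper rounds $\alpha$ \emph{down} via $I=\lfloor \log_a(\alpha/\alpha_0)\rfloor$ so that $\alpha_\ast\le\alpha$ (otherwise the sharpness bound with $\alpha_\ast$ in the denominator fails), and the radius $\delta$ is not simply $(\epsilon/\alpha_\ast)^{1/\beta_\ast}$ but the two-case formula in lines~6--10 of \cref{alg:restart-unknown-constsC}, because when $2\epsilon/\alpha_\ast>1$ the exponent $1/\beta_\ast$ undershoots $1/\beta$ and one must use $\min\{b/\beta_\ast,1/\beta_0\}$ instead. These are fixable.

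The real gap is your search step. You describe the schedule criterion as an \emph{adaptive} device that ``detects failure through an observable test on the output of~$\Gamma$'' and ``discards'' bad guesses. That is not what happens. The schedule criterion $h$ is a fixed function of indices only; it never looks at iterates, never tests anything, and never discards a node. The algorithm allocates inner iterations to every pair $(i,j)$---good or bad---according to the deterministic ordering induced by $h$ (via the $h$-assignment $\phi$), while tracking the overall $\arg\min$ of $f+g_Q$. The analysis (\cref{thm:restart-unknown-consts-err}) is a pure counting argument: once the counter $k$ for the ``correct'' pair $(I,J)$ reaches $K(\varepsilon)$, that pair has performed enough restarts, and the total work so far equals the cardinality of the sublevel set $\{(i',j',k'):h(|i'|,j',k')\le h(|I|,J,K(\varepsilon))\}$. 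The specific polynomial form $h(x_1,x_2,x_3)=(x_1+1)^{c_1}(x_2+1)^{c_2}x_3$ makes this cardinality at most a constant times $K(\varepsilon)$, by the elementary sum $\sum_{n\ge1}n^{-c}\le c/(c-1)$ (\cref{rad_ord_cor1}). There is no geometric summation over discarded nodes and no notion of a node being ``caught''; the work at wrong $(i,j)$ is simply budgeted by the schedule, not detected.
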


\cref{sec:expansion_functions} contains the proof of this theorem.
A few comments are warranted:
\begin{itemize}
	\item \textbf{The role of $\varepsilon$:} Note that $\varepsilon$ is not an actual parameter of the algorithm but is used to describe the algorithm’s behavior as the number of iterations increases.
	\item \textbf{Non-uniqueness of constants:} It is possible for a problem \eqref{eqn:cvx-problem} to satisfy the approximate sharpness condition \eqref{eqn:sharpness} with different values for $\alpha, \beta,$ and $\eta$. This can lead to varied convergence rates and the constant $\hat{C}$ in \eqref{sum_asy_rat}. In such cases, \cref{thm:MAIN} says that for a given accuracy threshold $\varepsilon \geq \eta$, we can take the best rate of convergence/iteration bound over different approximate sharpness constants. Hence, non-uniqueness is beneficial. Furthermore, it is possible to take advantage of local approximate sharpness near minimizers.
	\item \textbf{Convergence to order $\eta$:} \cref{thm:MAIN} does not guarantee a decrease of the objective function error below $\eta$ as $\varepsilon \rightarrow 0^{+}$. This is reasonable in practice. For example, in the case of sparse recovery, $\eta$ is a combination of the noise level and the best $s$-term approximation error (recall \cref{sec:sparse-recovery}). Therefore, there is little benefit in decreasing the objective function error below $\eta$ since the error in the recovered vector will generally be at least $\eta$ in magnitude.
	\item \textbf{Assumption on convergence rates:} The assumption in \eqref{known_const_C_assump} is generic for convergence rates of first-order methods. Examples of these rates are provided in \cref{sec:FOM_examples}. The $+1$ term is included in \eqref{known_const_C_assump} since we often have a bound of the form
$
\cC_\Gamma(\delta, \epsilon) \leq \lceil{C \delta^{d_1}}/{\epsilon^{d_2}}\rceil.
$
\item \textbf{Initial estimates and scale factor:} The parameters $\alpha_0 > 0$ and $\beta_0 \geq 1$ in \cref{alg:restart-unknown-constsC} are estimates for the true $\alpha$, $\beta$. Setting $\alpha_0 = \beta_0 = 1$ is advisable if no prior estimates are available. Regarding the scale factor $r$, as discussed in \cref{sec:bestr}, a good choice is $r = \E^{-1/d_2}$.
\end{itemize}

As mentioned, our scheme conducts a grid search over the parameters $\alpha$, $\beta$ using the bases $a,b > 1$ and estimates $\alpha_0,\beta_0$. The order of this search is determined by a schedule criterion function, which is detailed in \cref{def:grid-search-bijection} and discussed subsequently. This new idea offers flexibility depending on which parameters are known or unknown and facilitates a unified framework for proving convergence results (e.g., using \cref{thm:restart-unknown-consts-err}). We will provide further details in \cref{sec:restart-unknown-constants}. Notably, this framework enables us to conduct searches over a nonuniform grid (see \cref{rad_ord_cor1}) that searches more in iteration than parameter index space. This approach is crucial for developing a search method for unknown parameters that does not suffer from reduced convergence rates.

\begin{table}[t]
\centering
\begin{tabular}{|c|rc|c|}
\hline
\multirow{2}{*}{Objective function class/structure}& \multicolumn{2}{c|}{\multirow{2}{*}{Asymptotic bound for $K(\varepsilon)$}} & \multirow{2}{*}{Example method}\\
&\multicolumn{2}{c|}{}&\\
\hline
\hline
\multirow{4}{*}{\thead{$L-$smooth \\ See \cref{L_smooth} \\(NB: must have $\beta\geq 2$)}}&
\multirow{2}{*}{$\beta=2$:\!\!\!\!}&
\multirow{2}{*}{$\sqrt{L/\alpha}\cdot\log(1/ \varepsilon)$}&
\multirow{4}{*}{\thead{Nesterov's method\\$d_1=1$, $d_2=1/2$\\See \cref{sec:alg_NESTA_smoothing}}}\\
&&&\\\cdashline{2-3}
&\multirow{2}{*}{$\beta>2$:\!\!\!\!}&
\multirow{2}{*}{$\frac{\sqrt{L}}{\alpha^{1/\beta_*}}\cdot\frac{1}{\varepsilon^{1/2-1/\beta_*}}$}&\\
&&&\\\hline
\multirow{4}{*}{\thead{$(u,v)-$smoothable\\See \cref{def:uv-smoothable}}}&
\multirow{2}{*}{$\beta=1$:\!\!\!\!}&
\multirow{2}{*}{$\frac{\sqrt{ab}}{\alpha}\cdot\log(1/ \varepsilon)$}&
\multirow{4}{*}{\thead{Nesterov's method\\with smoothing\\$d_1=1$, $d_2=1$\\See \cref{Nesta_smooth_sec}}}\\
&&&\\\cdashline{2-3}
&\multirow{2}{*}{$\beta>1$:\!\!\!\!}&
\multirow{2}{*}{$\frac{\sqrt{ab}}{\alpha^{1/\beta_*}}\cdot\frac{1}{\varepsilon^{1-1/\beta_*}}$}&\\
&&&\\\hline
\multirow{6}{*}{\thead{H\"older smooth, parameter $\nu\in[0,1]$\\
See \cref{def:holder-smooth22}\\(NB: must have $\beta\geq 1+\nu$)
}}&
\multirow{3}{*}{$\beta=1\!+\!\nu$:\!\!\!\!}&
\multirow{3}{*}{$\frac{M_\nu^{\frac{2}{1+3\nu}}}{\alpha^{\frac{2}{(1+3\nu)}}}\cdot\log(1/ \varepsilon)$}&
\multirow{6}{*}{\thead{Universal fast\\gradient method\\$d_1=(2+2\nu)/(1+3\nu)$\\$d_2=2/(1+3\nu)$\\
See \cref{sec:FGM}}}\\
&&&\\
&&&\\\cdashline{2-3}
&\multirow{3}{*}{$\beta>1\!+\!\nu$:\!\!\!\!}&
\multirow{3}{*}{$\frac{M_\nu^{\frac{2}{1+3\nu}}}{\alpha^{\frac{2+2\nu}{\beta_*(1+3\nu)}}}\cdot\frac{1}{\varepsilon^{\frac{2(\beta_*-1-\nu)}{\beta_*(1+3\nu)}}}$}&\\
&&&\\
&&&\\\hline
\multirow{4}{*}{\thead{$f(x)\!=\!q(x)\! +\! g(x)\! +\! h(Bx)$, $q$ is $L_q-$smooth,\\$\sup_{z\in \mathrm{dom}(h)}\inf_{y\in\partial h(z)}\nm{y}\leq L_h$,\\$\|B\|\leq L_B$}}&
\multirow{2}{*}{$\beta=1$:\!\!\!\!}&
\multirow{2}{*}{$\frac{L_BL_h+L_q}{\alpha}\cdot{\log(1/ \varepsilon)}$}&
\multirow{4}{*}{\thead{Primal-dual algorithm\\$d_1=1$, $d_2=1$\\See \cref{sec:PD_unconstrained_desc}}}\\
&&&\\\cdashline{2-3}
&\multirow{2}{*}{$\beta>1$:\!\!\!\!}&
\multirow{2}{*}{$\frac{L_BL_h+L_q}{\alpha^{1/\beta_*}}\cdot\frac{1}{\varepsilon^{1-1/\beta_*}}$}&\\
&&&\\\hline
\multirow{4}{*}{\thead{$f(x)\!=\! q(x)\! +\! g(x)\! +\! h(Bx)$, $q$ is $L_q-$smooth,\\
$\sup_{z\in \mathrm{dom}(h)}\inf_{y\in\partial h(z)}\nm{y}\leq L_h$,\\$\|A\|\leq L_A$, $\|B\|\leq L_B$,\\
$Q\!=\!\{x:Ax\in C\},$ $g_Q(x)\!=\!\kappa\inf_{z\in C} \!\nm{Ax-z}$}}&
\multirow{2}{*}{$\beta=1$:\!\!\!\!}&
\multirow{2}{*}{$\frac{\kappa L_A+L_BL_h+L_q}{\alpha}\cdot{\log(1/ \varepsilon)}$}&
\multirow{4}{*}{\thead{Primal-dual algorithm\\with constraints\\$d_1=1$, $d_2=1$\\See \cref{sec:PD_constrained_alg}}}\\
&&&\\\cdashline{2-3}
&\multirow{2}{*}{$\beta>1$:\!\!\!\!}&
\multirow{2}{*}{$\frac{\kappa L_A+L_BL_h+L_q}{\alpha^{1/\beta_*}}\cdot\frac{1}{\varepsilon^{1-1/\beta_*}}$}&\\
&&&\\\hline
\end{tabular}
\caption{Asymptotic cost bounds (as $\varepsilon\downarrow0$ for $\eta\lesssim\varepsilon$) and suitable first-order methods for Algorithm 2 when applied to different classes of objective functions. Note that whenever the bound is a polynomial in $\log(1/\varepsilon)$, we have $\beta_*=\beta$.\label{rates_table}}
\end{table}

\subsection{Complexity bounds}\label{sec:complexity-bounds}

Suppose now that $\eta \lesssim \varepsilon$. Note that this case includes $\eta = 0$, in which case sharpness holds. When \cref{alg:restart-unknown-constsC} is applied with a suitable first-order method, it leads to near-optimal\footnote{By optimal, we mean optimal in the number of oracle calls to $f$, its gradient (where appropriate) or suitable proximal maps. For the first-order methods we discuss, this number will always be bounded by a small multiple of the number of iterations.} complexity bounds for a wide range of different convex optimization problems, \textit{without knowledge of $\alpha$ and $\beta$}. \cref{rates_table} summarizes some of these bounds, and the following correspond to an example for each row:

\begin{itemize}
	\item For $L$-smooth functions (\cref{L_smooth}) with $\beta = 2$, a well-known lower bound for the subclass of strongly convex smooth functions is $\mathcal{O}(\sqrt{L/\alpha}\log(1/\varepsilon))$ \cite{nemirovskij1983problem}. If $\beta> 2$ then the optimal lower bound is $\mathcal{O}(\sqrt{L}\alpha^{-1/\beta}/\varepsilon^{1/2-1/\beta})$ \cite[page 26]{nemirovskii1985optimal}. In both cases, we achieve these optimal bounds with our algorithm (provided $\beta_* = \beta$; see below) using, for example, Nesterov's method.
	\item Suppose that the objective function $f$ is $L_f$-Lipschitz and has linear growth. Such functions are $(1,L_f^2/2)$-smoothable (\cref{def:uv-smoothable}). When $\beta = 1$, the combination of our algorithm and Nesterov's method with smoothing has complexity $\ord{\log(1/\varepsilon)}$.
	\item For H\"older smooth functions (see \cref{def:holder-smooth22}), the bound in \cref{rates_table} matches (again, provided $\beta_* = \beta$) the optimal bound from \cite[page 26]{nemirovskii1985optimal}.
	\item There is not much work on optimal rates for saddle point problems, a challenge being that there are different measures of error (see \cite{optimal_saddle}). Hence, we cannot claim that the final two rows of \cref{rates_table} yield optimal rates. Nevertheless, they yield significantly faster convergence rates than non-restarted first-order methods for saddle point problems.
\end{itemize}

The above optimality depends on $\beta=\beta_*$ (which occurs whenever $\beta$ lies on the grid). If this does not hold, then there is an additional algebraic factor in \cref{rates_table}, making it slightly suboptimal. This can be overcome by a choice of $b$ that depends logarithmically on $\varepsilon$ at the expense of an additional logarithmic term. This point and its relation to other methods is discussed in \cref{sec:getridofalgebraic}.

\subsection{Connections with previous work}
\label{sec:prev_work}

There is a large amount of recent work on adaptive first-order methods \cite{o2015adaptive,freund2018new,necoara2019linear,fercoq2016restarting,su2014differential,giselsson2014monotonicity,fercoq2019adaptive}. Adaptive methods seek to learn when to restart a first-order method by trying various values for the method's parameters and observing consequences over several iterations. Nesterov provided a catalyst for this body of work in \cite{nesterov2013gradient}, where he designed an accelerated (line search) method for $L$-smooth objective functions $f$ (see \cref{sec:alg_NESTA_smoothing}) with an optimal convergence rate $\mathcal{O}(\sqrt{L/\varepsilon})$ without needing $L$ as an input. In the same paper, Nesterov considered strongly convex objective functions with a grid search to approximate the strong convexity parameter. By narrowing the class of objective functions, this led to an adaptive method with a dramatically improved convergence rate ($\mathcal{O}(\log(1/\varepsilon))$, compared to $\mathcal{O}(1/\sqrt{\varepsilon})$), even without having to know the Lipschitz constant or strong convexity parameter.

The complexity of first-order methods is usually controlled by smoothness assumptions on the objective function, such as Lipschitz continuity of its gradient. Additional assumptions on the objective function, such as strong and uniform convexity, provide respectively, linear and faster polynomial rates of convergence \cite{nesterov2003introductory}. Restart schemes for strongly convex or uniformly convex functions have been studied in \cite{nemirovskii1985optimal,nesterov2013gradient,iouditski2014primal,lin2014adaptive}. However, strong or uniform convexity is often too restrictive an assumption in many applications.

An assumption more general than strong or uniform convexity is sharpness:
\begin{equation}
d(x, \widehat{X}) \leq \left( \frac{f(x) - \hat{f}}{\alpha} \right)^{1/\beta} \qquad \forall x \in Q, \label{loja}
\end{equation}
also known as a H\"olderian growth/error bound or a \L{}ojasiewicz-type inequality. For example, Nemirovskii and Nesterov \cite{nemirovskii1985optimal} linked a ``strict minimum'' condition similar to \eqref{loja} (with known constants) with faster convergence rates using restart schemes for smooth objective functions. For further use of \L{}ojasiewicz-type inequalities for first-order methods, see \cite{bolte2014proximal,bolte2017error,attouch2010proximal,frankel2015splitting,karimi2016linear}. H\"olderian error bounds were first introduced by Hoffman \cite{hoffman1952approximate} to study systems of linear inequalities, and extended to convex optimization in \cite{robinson1975application,mangasarian1985condition,auslender1988global,burke1993weak,burke2002weak}. \L{}ojasiewicz showed that \eqref{loja} holds generically for real analytic and subanalytic functions \cite{lojasiewicz1963propriete}, and Bolte, Daniilidis, and Lewis extended this result to non-smooth subanalytic convex functions \cite{bolte2007lojasiewicz}. However, the proofs of these results use non-constructive topological arguments. Hence, without further case-by-case analysis of problems and outside of some particular cases (e.g., strong convexity), we cannot assume that suitable constants in \eqref{loja} are known.

An example of \eqref{loja} for $\beta=1$ was considered in \cite{roulet2020computational} (see also \cite{benlectures}), where the authors use a restarted NESTA algorithm \cite{becker2011nesta} for the exact recovery of sparse vectors from noiseless measurements. The approximate sharpness condition \eqref{eqn:sharpness} was first considered in \cite{colbrook2021warpd} for the case of $\beta=1$, and known $\alpha$ and $\eta$, to allow the recovery of approximately sparse vectors from noisy measurements and further related examples. Here, the parameter $\eta>0$ is crucial, both in practice and to allow analysis. See also \cite{adcock2022efficient,neyra-nesterenko2022nestanets}. Though similar to the sharpness condition in \eqref{loja}, our more general assumption in \eqref{eqn:sharpness} differs in two essential ways, discussed above. First, we do not assume that the sharpness condition is exact ($\eta>0$), and second, we do not require iterates of our algorithm to be feasible (the function $g_Q$). It is also important to re-emphasize that, in this paper, we do not assume that the approximate sharpness constants are known.

The $\eta$ term in \eqref{eqn:sharpness} is expected and natural, for example, in the sparse recovery example of \cref{sec:sparse-recovery}. In \cite{colbrook2022difficulty}, it was shown that there are well-conditioned recovery problems for which stable and accurate neural networks exist, but no training algorithm can obtain them. The existence of a training algorithm depends on the amount/type of training data and the accuracy required. However, under certain conditions, one can train an appropriate neural network: \cite{colbrook2022difficulty} links trainability to a particular case of \eqref{eqn:sharpness}, and links the accuracy possible via training to the corresponding $\eta$ term. In the setting of inexact input, the noise parameter appears as a limitation on the ability of an algorithm \cite{opt_big}. These phenomena occur even if the algorithm is only expected to work on a restricted class of inputs that are `nice' or `natural' for the problem under consideration. The results of \cite{colbrook2022difficulty,opt_big} lead to the phenomenon of generalized hardness of approximation, where it is possible to obtain solutions up to some threshold, but beyond that threshold it becomes impossible. This threshold is strongly related to $\eta$ in the standard cases.

Most restart schemes are designed for a narrow family of first-order methods and typically rely on learning approximations of the parameter values characterizing functions in a particular class, e.g., learning the Lipschitz constant $L$ when $f$ is assumed to be $L$-smooth, or the constants $\alpha$ and $\beta$ in \eqref{loja} (e.g., see the discussion in \cite{renegar2021simple}). Two notable exceptions related to the present paper particularly inspired us.

First, Roulet and d'Aspremont \cite{roulet2020sharpness} consider all $f$ possessing sharpness and having H\"older continuous gradient with exponent $0 <\nu\leq 1$. The restart schemes of \cite{roulet2020sharpness} result in optimal complexity bounds when particular algorithms are employed in the schemes, assuming scheme parameters are set to appropriate values that, however, are generally unknown in practice. The limitations of a grid search for these parameters are explicitly discussed in Appendix C of \cite{roulet2020sharpness} due to an additional algebraic factor, similar in spirit to the case of $\beta\neq\beta_*$ in \cref{rates_table}. A critical difference between our scheme and that of \cite{roulet2020sharpness} is our use of a schedule criterion function (see \cref{sec:sch_defs}). For example, this flexibility leads us to overcome these algebraic losses, as outlined in \cref{sec:getridofalgebraic}, up to a logarithmic factor. However, for smooth $f$ (i.e., $\nu=1$), \cite{roulet2020sharpness} develops an adaptive grid search procedure for a certain condition number and sharpness parameters within the scheme to accurately approximate the required values, leading to an overall complexity that is optimal up to a squared logarithmic term. Some other differences between our scheme and that of \cite{roulet2020sharpness} is that we deal with approximate sharpness ($\eta>0$), allow infeasible iterates (captured by $g_Q$), and consider general classes of functions, some of which are listed in \cref{rates_table}.

Second, Renegar and Grimmer \cite{renegar2021simple} provide a simple scheme for restarting (generic) first-order methods. Multiple instances are run that communicate their improvements in objective value to one another, possibly triggering restarts. Their restart scheme only depends on how much the objective value has been decreased and does not attempt to learn parameter values. The scheme in \cite{renegar2021simple} leads to nearly optimal complexity bounds for quite general classes of functions. This method differs quite significantly from ours in that it does not assume an underlying sharpness condition \eqref{loja} (although such a condition is used in the analysis to obtain explicit complexity bounds). In contrast to \cite{renegar2021simple}, our method is independent of the total number of iterations, and we do not need to specify the total number of iterations in advance. Further, we also address the practical case of approximate sharpness and allow the case of infeasible iterates (the convergence analysis of \cite{renegar2021simple} relies on $\eta=0$ and that iterates are feasible). In \cref{sec:comparison_sec}, we compare our scheme to that of Renegar and Grimmer.

Restart schemes can also be employed for non-deterministic algorithms. For example, Fercoq and Qu \cite{fercoq2016restarting} study an objective function that can be written as a sum of differentiable and separable functions under the assumption of a local quadratic bound ($\beta=2$ in our notation). They introduce a sequence of variable restart periods that allows several restart periods to be tried. This allows them to force a decrease in function value when a restart occurs. This leads to a scheme with a nearly linear rate, even without knowledge of the local quadratic error bound.

\begin{table}[t]\label{notation_table}
\centering
\begin{tabular}{|l|l|}
\hline
Notation& Meaning\\
\hline
\hline
$f$ & Proper convex function \\
$D$ & Effective domain of $f$ \\
$Q$ & Closed, convex subset of $\bbR^n$ or $\bbC^n$\\
$g_Q$ & Sharpness feasibility gap function, identically zero on $Q$\\
$\hat{f}$ & Minimum value of objective function over $Q$\\
$\widehat{X}$ & Set of minimizers of $f$ \\
$d$ & Metric on $\bbR^n$ or $\bbC^n$\\
$\eta$ & Sharpness gap constant \\
$\alpha$ & Sharpness scaling constant \\
$\beta$ & Sharpness exponentiation constant \\
$\delta$ & Distance bound between initial point to optimum points \\
$\varepsilon$ & Bound on sum of objective function error and feasibility gap\\
$\epsilon_j$ & Sum of objective function error and feasibility gap at $j$th restart initial point \\
$\Gamma$ & Optimization algorithm \\
$\cC_\Gamma$ & Cost function that outputs the number of iterates \\
$\phi$ & Mapping of current algorithm step to parameter subscripts $(i,j,k)$ \\
$h$ & Function defining classes of maps $\phi$ as abstract execution order of restart scheme \\
$\chi_C$ & Indicator function of a set $C$ ($\chi_C(x)=0$ if $x\in C$, $\chi_C(x)=\infty$ otherwise) \\
$\nm{\cdot}$ & Unless otherwise stated, the Euclidean norm on $\bbC^n$ or the induced $2$-norm on $\bbC^{m \times n}$ \\
$\ip{\cdot}{\cdot}$ & Unless otherwise stated, the Euclidean inner product on $\bbC^n$ \\
$\ip{\cdot}{\cdot}_{\bbR}$ & Unless otherwise stated, $\ip{x}{y}_{\bbR} = \mathrm{Re}\left(\ip{x}{y}\right)$ for $x,y \in \bbC^n$ \\
$\bbR_+$ & Non-negative real numbers \\
$\bbR_{++}$ & Positive real numbers \\
$\bbN_{0}$ & Non-negative integers $\{0\}\cup\mathbb{N}$ \\
\hline
\end{tabular}
\caption{Notation used throughout the paper.}
\end{table}

\subsection{Notation and outline}\label{blah}

Table 2 outlines the notation used throughout the paper for ease of reference. The remainder of this paper is organized as follows. In \cref{sec:restart-known-constants}, we introduce a restart scheme where $\eta$ is unknown, but $\alpha$ and $\beta$ are known. This transpires to be significantly more straightforward than the general case. Next, in \cref{sec:restart-unknown-constants}, we introduce and analyze the complete restart scheme when all three constants are potentially unknown. In \cref{sec:FOM_examples}, we apply this restart scheme to different problems with various first-order methods, leading, in particular, to the results described in \cref{rates_table}. Next, in \cref{sec:num-exp}, we present a series of numerical experiments illustrating the restart schemes in different applications. Finally, we end in \cref{sec:conclusion} with conclusions and open problems.

\section{Restart scheme for unknown \texorpdfstring{$\eta$ but known $\alpha$ and $\beta$}{Lg}}
\label{sec:restart-known-constants}

To formulate a restart scheme within the setup of \cref{sec:prob_setup_intro}, observe that the approximate sharpness condition \eqref{eqn:sharpness} relates the distance $d(x,\widehat{X})$ to the objective function error $f(x) - \hat{f}$ and feasibility gap $g_Q(x)$. The upper bound in the approximate sharpness condition can be used as the input $\delta$ for the algorithm $\Gamma$. At the same time, $\epsilon$ is set as a rescaling of the previous sum of objective error and feasibility gap $f(x) - \hat{f} + g_Q(x)$ with rescaling parameter $r\in(0,1)$. However, in practical scenarios, the exact values of the objective error $f(x) - \hat{f}$ and feasibility gap $g_Q(x)$ might not be known. Instead, it is sufficient to have upper bounds for these quantities.

\begin{algorithm}[t]
\SetKwInOut{Input}{Input} 
\SetKwInOut{Output}{Output}
\SetKwComment{Comment}{// }{}
\Input{Optimization algorithm $\Gamma$ for \eqref{eqn:cvx-problem}, initial vector $x_0 \in D$, upper bound $\epsilon_0$ such that $f(x_0) - \hat{f} + g_Q(x_0) \leq \epsilon_0$, constants $\alpha > 0$ and $\beta \geq 1$ such that \eqref{eqn:sharpness} holds (for possibly unknown $\eta \geq 0$), $r \in (0,1)$, and number of restart iterations $t \in \bbN$.}
\Output{Final iterate $x_t$ approximating a solution to \eqref{eqn:cvx-problem}}
\For{$k = 0, 1, \dots, t-1$}{
$\epsilon_{k+1} \gets r \epsilon_k$ \;
$\delta_{k+1} \gets \left(\frac{2\epsilon_k}{\alpha} \right)^{1/\beta}$ \;
$z \gets \Gamma \left( \delta_{k+1} , \epsilon_{k+1}, x_k \right)$\;
$x_{k+1} \gets \argmin{} \left\{ f(x) + g_Q(x) : x = x_k \mbox{ or } x = z \right\}$\;
}
\caption{Restart scheme for unknown $\eta$.\label{alg:restart-known-consts}}
\end{algorithm}

As a warmup, we first consider the scenario where the constants $\alpha$ and $\beta$ are known, but $\eta$ remains unknown. The restart scheme for this case is outlined in \cref{alg:restart-known-consts} and is similar in spirit to other known-constant restart schemes but with the added accommodation of any general $\eta > 0$. For instance, in \cite[Sec. 2]{roulet2020sharpness}, the authors consider a restart scheme for H\"older smooth functions (with $\nu=1$), which reduces the objective function by a specific factor for functions satisfying \eqref{eqn:sharpness} with $\eta=0$. The more straightforward case with known $\alpha$ and $\beta$ in \cref{alg:restart-known-consts} provides insights into solving the more comprehensive problem addressed in \cref{sec:restart-unknown-constants}. A significant distinction of our method in \cref{sec:restart-unknown-constants} from previous approaches is our use of a general search method over the parameters $\alpha,\beta$, with a flexible choice of base. This method can be applied in any situation described by \eqref{eqn:sharpness} and is compatible with any first-order method that meets the conditions specified in \cref{sec:prob_setup_intro}.

Using the approximate sharpness condition \eqref{eqn:sharpness} and the algorithm $\Gamma$, we see inductively that for any $t$ with $\epsilon_t\geq \eta$, \cref{alg:restart-known-consts} produces iterates $x_0, x_1, \ldots, x_t \in D$ satisfying the following two bounds:
\begin{equation}
\label{large_error_regime}
\begin{split}
&f(x_k) - \hat{f} + g_Q(x_k)\leq \epsilon_k, \\
&d(x_k, \widehat{X}) \leq \left(\frac{f(x_k) - \hat{f} + g_Q(x_k) + \eta}{\alpha} \right)^{1/\beta} \leq \left(\frac{\epsilon_k + \eta}{\alpha} \right)^{1/\beta} \leq \left(\frac{2\epsilon_k}{\alpha} \right)^{1/\beta}, \qquad 0 \leq k \leq t.
\end{split}
\end{equation}
In addition, the total number of inner iterations used in \cref{alg:restart-known-consts} is at most
\eas{
\sum_{k=0}^{t-1} \cC_\Gamma \left( \left( \frac{2\epsilon_{k}}{\alpha} \right)^{1/\beta} , \epsilon_{k+1} \right).
}
Under further assumptions about the function $\cC_\Gamma$, the iterates produced by the restart scheme yield linear (if $d_2=d_1\beta$) or fast algebraic (if $d_2>d_1\beta$) decay of $f(x_k) - \hat{f} + g_Q(x_k)$ in $k$ down to a finite tolerance proportional to $\eta$. Hence, this property holds for both the objective error $f(x_k) - \hat{f}$ and feasibility gap $g_Q(x_k)$. We state and prove this in the following theorem.

\begin{theorem} \label{thm:restart-known-consts}
Consider \cref{alg:restart-known-consts} and its corresponding inputs. For any $\varepsilon \in(0,\epsilon_0)$, if we run \cref{alg:restart-known-consts} with $t \geq \lceil \log(\epsilon_0 / \varepsilon) / \log(1/r)\rceil$, then 
\ea{
f(x_t) - \hat{f} + g_Q(x_t) \leq \max \{ \eta , \varepsilon \}. \label{eqn:known-consts-eps-error}
}
Suppose, in addition, that for all $\delta, \epsilon > 0$, $\cC_\Gamma$ satisfies
$$
\cC_\Gamma(\delta, \epsilon) \leq {C \delta^{d_1}}/{\epsilon^{d_2}}+1, \qquad C,d_1,d_2 > 0.
$$
Then the total number of iterations of $\Gamma$ needed to compute an $x_t$ with \R{eqn:known-consts-eps-error} is at most
\begin{equation}
\left\lceil\frac{\log(\epsilon_0 /\varepsilon)}{\log(1/r)} \right\rceil+\frac{C2^{d_1/\beta}}{\alpha^{d_1/\beta} r^{d_2}} \cdot 
\begin{cases}
\frac{1-r^{\lceil{\log(\epsilon_0 /\varepsilon)}/{\log(1/r)} \rceil|d_2-d_1/\beta|}
}{1-r^{|d_2-d_1/\beta|}}\cdot \frac{1}{\epsilon_0^{d_2-d_1/\beta}},\quad &\text{if $d_2<d_1/\beta$},\\
\left\lceil\frac{\log(\epsilon_0 /\varepsilon)}{\log(1/r)} \right\rceil,\quad &\text{if $d_2=d_1/\beta$},\\
\frac{1-r^{\lceil{\log(\epsilon_0 /\varepsilon)}/{\log(1/r)} \rceil|d_2-d_1/\beta|}
}{1-r^{|d_2-d_1/\beta|}}\cdot\frac{1}{\varepsilon^{d_2-d_1/\beta}},\quad &\text{if $d_2>d_1/\beta$}.
\end{cases}\label{eqn:known-consts-total-iters}
\end{equation}
Asymptotically as $\varepsilon\downarrow 0$, these can be written as
$$
\sim \begin{cases}
\log(1/\epsilon),\quad &\text{if $d_2\leq d_1/\beta$},\\
\frac{1}{\varepsilon^{d_2-d_1/\beta}},\quad &\text{if $d_2>d_1/\beta$}.
\end{cases}
$$
\end{theorem}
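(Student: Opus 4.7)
\medskip

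\noindent\textbf{Proof proposal.} The plan is to split the argument in two. First, I would establish the functional bound \eqref{eqn:known-consts-eps-error} by induction on $k$, tracking the invariant $f(x_k) - \hat f + g_Q(x_k) \leq \max\{\eta,\epsilon_k\}$. The base case $k=0$ is the given bound on $x_0$. For the inductive step, I would distinguish two regimes. In the \emph{large-error regime} where $\epsilon_k \geq \eta$, the chain \eqref{large_error_regime} (which follows from the approximate sharpness \eqref{eqn:sharpness}) gives $d(x_k,\widehat X) \leq (2\epsilon_k/\alpha)^{1/\beta} = \delta_{k+1}$, so the specification \eqref{Gamma-delta-to-eps} of $\Gamma$ guarantees $f(z)-\hat f + g_Q(z) \leq \epsilon_{k+1}$ for the intermediate iterate $z$; the argmin step then propagates this bound to $x_{k+1}$. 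In the \emph{small-error regime} where $\epsilon_k < \eta$, I cannot invoke the $\Gamma$-specification, but the argmin step ensures the monotone inequality $f(x_{k+1}) + g_Q(x_{k+1}) \leq f(x_k) + g_Q(x_k)$, which preserves the bound $f(x_k)-\hat f + g_Q(x_k) \leq \eta$ from the first moment at which $\epsilon_k$ crosses below $\eta$. With $\epsilon_t = r^t \epsilon_0$, the choice $t \geq \lceil\log(\epsilon_0/\varepsilon)/\log(1/r)\rceil$ forces $\epsilon_t \leq \varepsilon$, and \eqref{eqn:known-consts-eps-error} follows.

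\medskip

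\noindent For the iteration count, I would sum $\cC_\Gamma(\delta_{k+1},\epsilon_{k+1})$ over $k=0,\dots,t-1$. Using the hypothesis $\cC_\Gamma(\delta,\epsilon) \leq C\delta^{d_1}/\epsilon^{d_2}+1$, substituting $\delta_{k+1} = (2\epsilon_k/\alpha)^{1/\beta}$ and $\epsilon_k = r^k \epsilon_0$, I get
\[
\sum_{k=0}^{t-1}\cC_\Gamma(\delta_{k+1},\epsilon_{k+1}) \leq t + \frac{C\,2^{d_1/\beta}}{\alpha^{d_1/\beta}r^{d_2}}\,\epsilon_0^{d_1/\beta-d_2}\sum_{k=0}^{t-1} r^{k(d_1/\beta-d_2)}.
\]
The three cases now correspond to whether the geometric ratio $r^{d_1/\beta-d_2}$ is less than, equal to, or greater than $1$. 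When $d_2 < d_1/\beta$, the exponent of $r$ is positive and the finite geometric series gives the first line of \eqref{eqn:known-consts-total-iters}. When $d_2 = d_1/\beta$, each term equals $1$ and the sum is $t$. When $d_2 > d_1/\beta$, the exponent is negative and the sum becomes a geometric series with ratio larger than one; rewriting using $|d_2-d_1/\beta|$ and absorbing the factor $\epsilon_0^{d_1/\beta-d_2}\,r^{-t|d_2-d_1/\beta|} \approx \varepsilon^{-|d_2-d_1/\beta|}$ (coming from the choice of $t$) produces the third line.

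\medskip

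\noindent The asymptotic statement as $\varepsilon\downarrow 0$ follows immediately by noting that $t \sim \log(1/\varepsilon)$ dominates in the first two cases, whereas in the third case the geometric sum grows like $r^{-t|d_2-d_1/\beta|} \sim \varepsilon^{-|d_2-d_1/\beta|}$, which dwarfs the $t$ term. I expect the main (though still modest) obstacle to be the bookkeeping in the small-error regime of part one: one must be careful that the $\Gamma$-specification is never invoked with a premise that could fail, and this is exactly the purpose of the argmin step, which must be emphasized. The summation in part two is routine algebra once the three regimes are correctly identified.
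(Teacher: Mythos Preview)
Your proposal is correct and follows essentially the same route as the paper. The only cosmetic difference is in how you dispatch the case $\varepsilon < \eta$: you track the invariant $f(x_k)-\hat f + g_Q(x_k) \leq \max\{\eta,\epsilon_k\}$ explicitly through both regimes, whereas the paper observes at the outset that the statement is unchanged if one assumes $\varepsilon \geq \eta$ (since the target is $\max\{\eta,\varepsilon\}$ and the iteration bound is monotone in $\varepsilon$), thereby avoiding the small-error regime altogether. Your geometric-series computation for the iteration count is identical to the paper's; just be sure to take $t = \lceil\log(\epsilon_0/\varepsilon)/\log(1/r)\rceil$ (the minimal value) when summing, so that the final bound matches \eqref{eqn:known-consts-total-iters} exactly.
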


Note that the cases in \eqref{eqn:known-consts-total-iters} match in the limit $d_2-d_1/\beta\rightarrow 0$.

\begin{proof}[Proof of \cref{thm:restart-known-consts}]
The theorem statement is unchanged if we assume that $\varepsilon\geq\eta$. Hence, we may assume without loss of generality that $\varepsilon\geq\eta$. Let $s=\lceil \log(\epsilon_0 / \varepsilon) / \log(1/r)\rceil$, then $\epsilon_{s-1}=r^{s-1}\epsilon_0\geq \varepsilon \geq \eta$. It follows that we are in the regime where \eqref{large_error_regime} holds, and hence
$$
f(x_{s-1}) - \hat{f} + g_Q(x_{s-1})\leq \epsilon_{s-1},\qquad d(x_{s-1}, \widehat{X}) \leq \left(\frac{2\epsilon_{s-1}}{\alpha} \right)^{1/\beta}.
$$
Then by line 4 of \cref{alg:restart-known-consts} and the choice of $s$, we have
$$
f(z) - \hat{f} + g_Q(z) \leq \epsilon_{s} \leq \varepsilon,\quad z = \Gamma(\delta_{s},\epsilon_s,x_{s-1}).
$$
Due to the $\mathrm{argmin}$ taken in \cref{alg:restart-known-consts},~\eqref{eqn:known-consts-eps-error} follows. The total number of iterations, $T$, needed to reach such an $x_s$ is bounded by
$$
T\leq\sum_{k=0}^{s-1} \cC_\Gamma \left( \left( \frac{2\epsilon_{k}}{\alpha} \right)^{1/\beta} , \epsilon_{k+1} \right)
 \leq s+C\sum_{k=0}^{s-1} \frac{(2\epsilon_{k})^{d_1/\beta}}{ \alpha^{d_1/\beta}\epsilon_{k+1}^{d_2}}
= s+ \frac{C2^{d_1/\beta}}{\alpha^{d_1/\beta} r^{d_2}} \sum_{k=0}^{s-1} \frac{1}{\epsilon_{k}^{d_2 - d_1/\beta}}.
$$
In the case that $d_2=d_1/\beta$, then $\epsilon_{k}^{d_2 - d_1/\beta}=1$ and we obtain
$$
T\leq s+ \frac{C2^{d_1/\beta}}{\alpha^{d_1/\beta} r^{d_2}} s=\left(1+\frac{C2^{d_1/\beta}}{\alpha^{d_1/\beta} r^{d_2}}\right)\left\lceil\frac{\log(\epsilon_0 /\varepsilon)}{\log(1/r)} \right\rceil.
$$
If $d_2\neq d_1/\beta$, we use that $\epsilon_k=r^k\epsilon_0$ and sum the geometric series to obtain
\begin{equation}
\label{new_casess}
T\leq \left\lceil\frac{\log(\epsilon_0 /\varepsilon)}{\log(1/r)} \right\rceil+\frac{C2^{d_1/\beta}}{\alpha^{d_1/\beta} r^{d_2}}\frac{1-r^{\lceil{\log(\epsilon_0 /\varepsilon)}/{\log(1/r)} \rceil(d_1/\beta-d_2)}
}{1-r^{d_1/\beta-d_2}}\frac{1}{\epsilon_0^{d_2-d_1/\beta}}.
\end{equation}
If $d_2>d_1/\beta$, then since $\epsilon_0 \geq \varepsilon/r^{s-1}$, we have $\epsilon_0^{d_2-d_1/\beta}\geq \varepsilon^{d_2-d_1/\beta}r^{d_2-d_1/\beta}/r^{s(d_2-d_1/\beta)}$. Substituting this into \eqref{new_casess} and rearranging yields
$$
T\leq \left\lceil\frac{\log(\epsilon_0 /\varepsilon)}{\log(1/r)} \right\rceil+ \frac{C 2^{d_1/\beta}}{\alpha^{d_1/\beta} r^{d_2}}   \frac{1-r^{\lceil{\log(\epsilon_0 /\varepsilon)}/{\log(1/r)} \rceil(d_2-d_1/\beta)}
}{1-r^{d_2-d_1/\beta}}\frac{1}{\varepsilon^{d_2 - d_1/\beta}}.
$$
The result follows by considering the three separate cases in \eqref{eqn:known-consts-total-iters}.
\end{proof}

\section{Restart scheme for unknown $\alpha$, $\beta$ and $\eta$}
\label{sec:restart-unknown-constants}

In the event that the constants $\alpha$, $\beta$ of \eqref{eqn:sharpness} are unknown, we introduce a logarithmic grid search for each of $\alpha$ and $\beta$, running multiple instances of \cref{alg:restart-known-consts}, and aggregating results that minimize the objective error and feasibility gap. Even if suitable \textit{global} $\alpha$ and $\beta$ are known, the following algorithm is useful since it also takes advantage of sharper versions of \eqref{eqn:sharpness} that only hold \textit{locally} around optimal points. Moreover, an objective function can satisfy the approximate sharpness condition in \eqref{eqn:sharpness} for multiple values of $\alpha,\beta$ and $\eta$. In such scenarios, our convergence theorems apply to the optimal values of these constants for any given $\epsilon$.

\subsection{Schedule criterion functions, $h$-assignments and grid searches}
\label{sec:sch_defs}

To introduce the algorithm, suppose that $\alpha$ and $\beta$ are both unknown and let $a,b > 1$ (the \textit{bases}). Our algorithm employs logarithmic search grids for $\alpha$ and $\beta$. Specifically, we consider the values $\alpha_i = a^i\alpha_0$ for $i \in \bbZ$ and $\beta_j = b^j\beta_0$ for $j \in \bbN_0$, where we assume that $\alpha_0,\beta_0$ are additional inputs with $\alpha_0>0$ and $\beta\geq \beta_0\geq 1$. Note that the lower bound $\beta_0$ in the definition of the $\beta_j$ is to capture additional knowledge that may be available (see, e.g., the examples in \cref{sec:FOM_examples}), and may be set to $1$ if no such knowledge is available. Similarly, the constant $\alpha_0$ centers the search grid for $\alpha$ and can be set to $1$ or a scaling factor that captures the magnitude of $f$.

Our algorithm applies the restart scheme described in \cref{alg:restart-known-consts} with the values $\alpha_i$ and $\beta_j$ for each $i$ and $j$. However, it does so according to a particular schedule or order. To capture this order, we make the following definition.

\begin{definition}\label{def:grid-search-bijection}
Consider an infinite subset $S \subseteq \bbZ \times \bbN_0 \times \bbN$. Let $h : \bbR_{+} \times \bbR_+ \times \bbR_{++} \rightarrow \bbR_{++}$ be a function that is non-decreasing in its first and second arguments, and strictly increasing in its third argument. We call such an $h$ a \textit{schedule criterion function}, or simply a \textit{schedule criterion}.
Given a schedule criterion $h$, an \textit{$h$-assignment over $S$} is a bijection $\phi : \bbN \rightarrow S$ satisfying
\ea{
  h(|i'|,j',k') \leq h(|i|,j,k) \quad \iff \quad \phi^{-1}(i',j',k') \leq \phi^{-1}(i,j,k),\label{key_h_prop}
}
for all $(i,j,k), (i',j',k') \in S$.\hfill$\blacktriangle$
\end{definition}

The schedule criterion $h$ and assignment $\phi$ together control the execution order of \cref{alg:restart-known-consts} instances for each triple $(i,j,k)$, where $k\in\bbN$ is a counter, which is an upper bound for the total number of iterations used by the algorithm for the parameter values $(i,j)$. The schedule criterion $h$ weights the importance of the indices $(i,j,k)$, and the assignment $\phi$ allows us to order the triples $(i,j,k)$ according to $h$. In the general case of unknown $\alpha$ and $\beta$, we take 
$$
S = \bbZ \times \bbN_0 \times \bbN,\qquad \text{($\alpha,\beta$ unknown).}
$$
Definition \ref{def:grid-search-bijection} also permits the case where either $\alpha$ or $\beta$ is known. Indeed, suppose that $\beta = \beta_0$ is known, but $\alpha$ is unknown. Then, we define the set $S$ as 
\be{
\label{S-alpha-unknown}
S = \bbZ \times \{0 \} \times \bbN,\qquad \text{($\alpha$ unknown, $\beta$ known)}
}
and let $\alpha_i = a^i \alpha_0$ as before. In this case, we may ignore the second set and consider the schedule criterion and $h$-assignment functions as mappings
\be{
\label{h-assignment-alpha-unknown}
h : \bbR_{+} \times \bbR_{++} \rightarrow \bbR_{++},\quad \phi : \bbN \rightarrow \bbZ \times \bbN,\qquad \text{($\alpha$ unknown, $\beta$ known)}.
}
Similarly, if $\alpha = \alpha_0$ is known and $\beta$ is unknown, then we let 
\be{
\label{S-beta-unknown}
S = \{ 0 \} \times \bbN_0 \times \bbN,\qquad \text{($\alpha$ known, $\beta$ unknown)}
}
and
\be{
\label{h-assignment-beta-unknown}
h : \bbR_{+} \times \bbR_{++} \rightarrow \bbR_{++}, \quad \phi : \bbN \rightarrow \bbN_0 \times \bbN,\qquad \text{($\alpha$ known, $\beta$ unknown)},
}
with $\beta_j = b^j \beta_0$. Finally, if both $\alpha$ and $\beta$ are known, then we set
\be{
\label{S-both-known}
S = \{ 0 \} \times \{ 0 \} \times \bbN,\qquad \text{($\alpha$, $\beta$ known)}
}
and
\be{
\label{h-assignment-known}
h :  \bbR_{++} \rightarrow \bbR_{++}, \quad \phi : \bbN \rightarrow \bbN,\qquad \text{($\alpha$, $\beta$ known)}.
}

For example, \cref{fig:schedule_examples} shows the level curves of the function $h$ defined by
\begin{align*}
h(x_1,x_2,x_3)&=(x_1+1)^2(x_2+1)^2x_3\qquad&&\text{($\alpha$ unknown, $\beta$ unknown)},\\
h(x_2,x_3)&=(x_2+1)^2x_3\qquad&&\text{($\alpha$ known, $\beta$ unknown)},\\
h(x_1,x_3)&=(x_1+1)^2x_3\qquad&&\text{($\alpha$ unknown, $\beta$ known)}.
\end{align*}
These specific choices are motivated by the theoretical results given later in Section \ref{sec:expansion_functions}.
The level curves in the figure describe the search order, and the algorithm performs instances of \cref{alg:restart-known-consts} according to the sublevel set of indices shown by the red dots.

\begin{figure}
\centering
\begin{minipage}[b]{1\textwidth}
\centering
\begin{overpic}[width=0.32\textwidth,trim={0mm 0mm 0mm 0mm},clip]{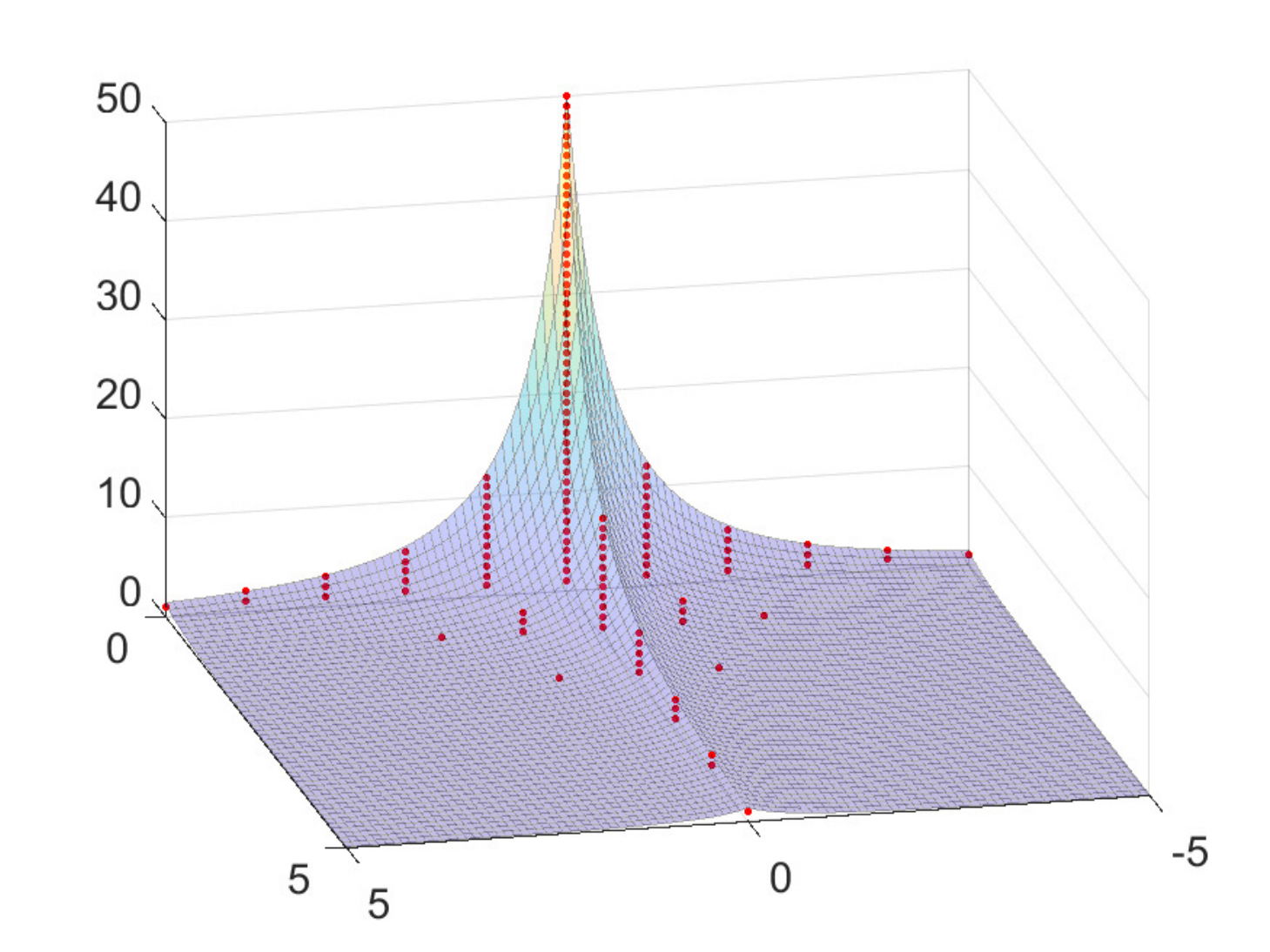}
\put (60,-1) {\small $i$}
\put (10,12) {\small $j$}
\put (0,60) {\small $k$}
\put (20,72) {\small Unknown $\alpha$ and $\beta$}
\end{overpic}
\begin{overpic}[width=0.32\textwidth,trim={0mm 0mm 0mm 0mm},clip]{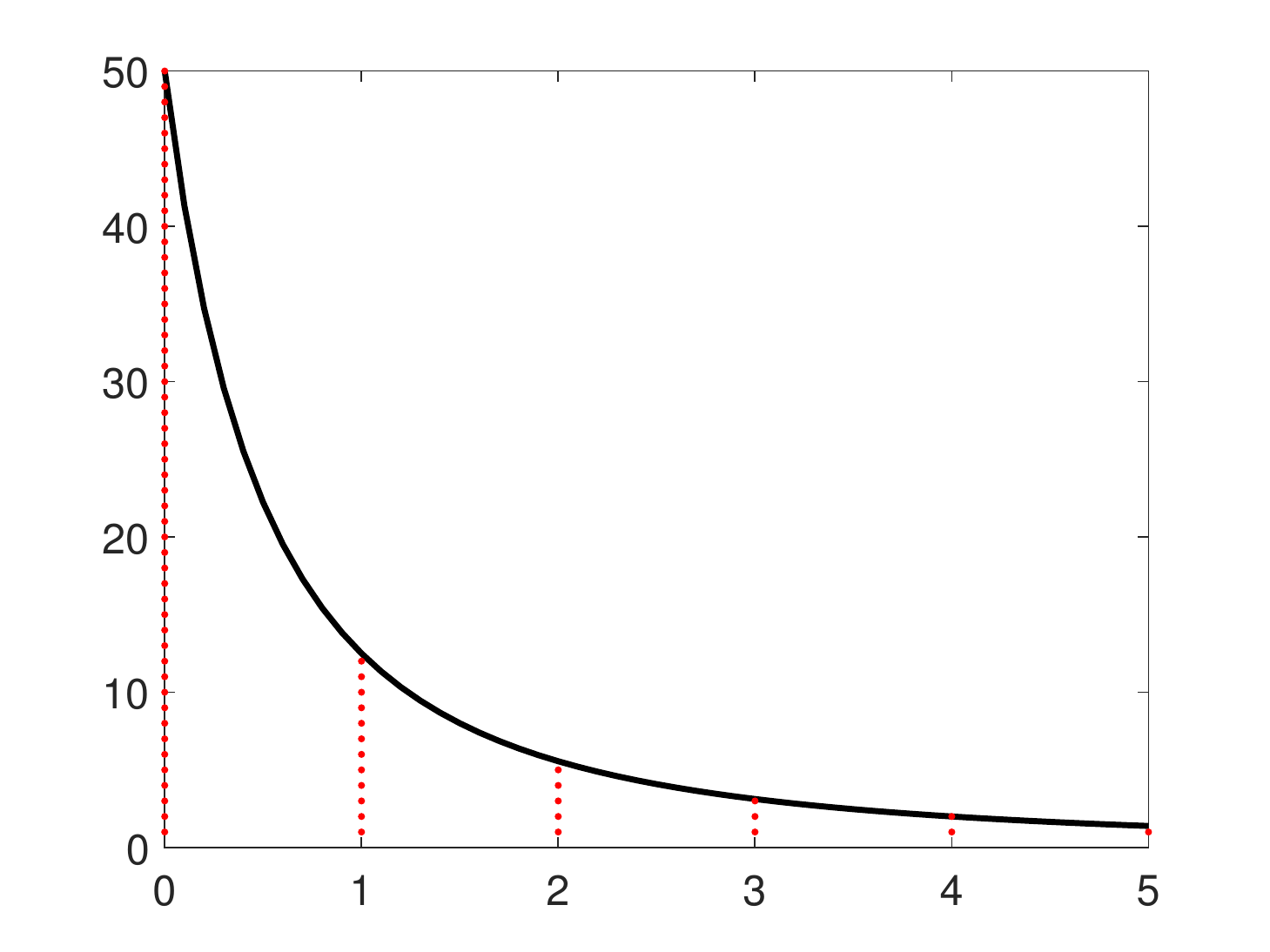}
\put (50,-2) {\small $j$}
\put (0,40) {\small $k$}
\put (40,72) {\small Known $\alpha$}
\end{overpic}
\begin{overpic}[width=0.32\textwidth,trim={0mm 0mm 0mm 0mm},clip]{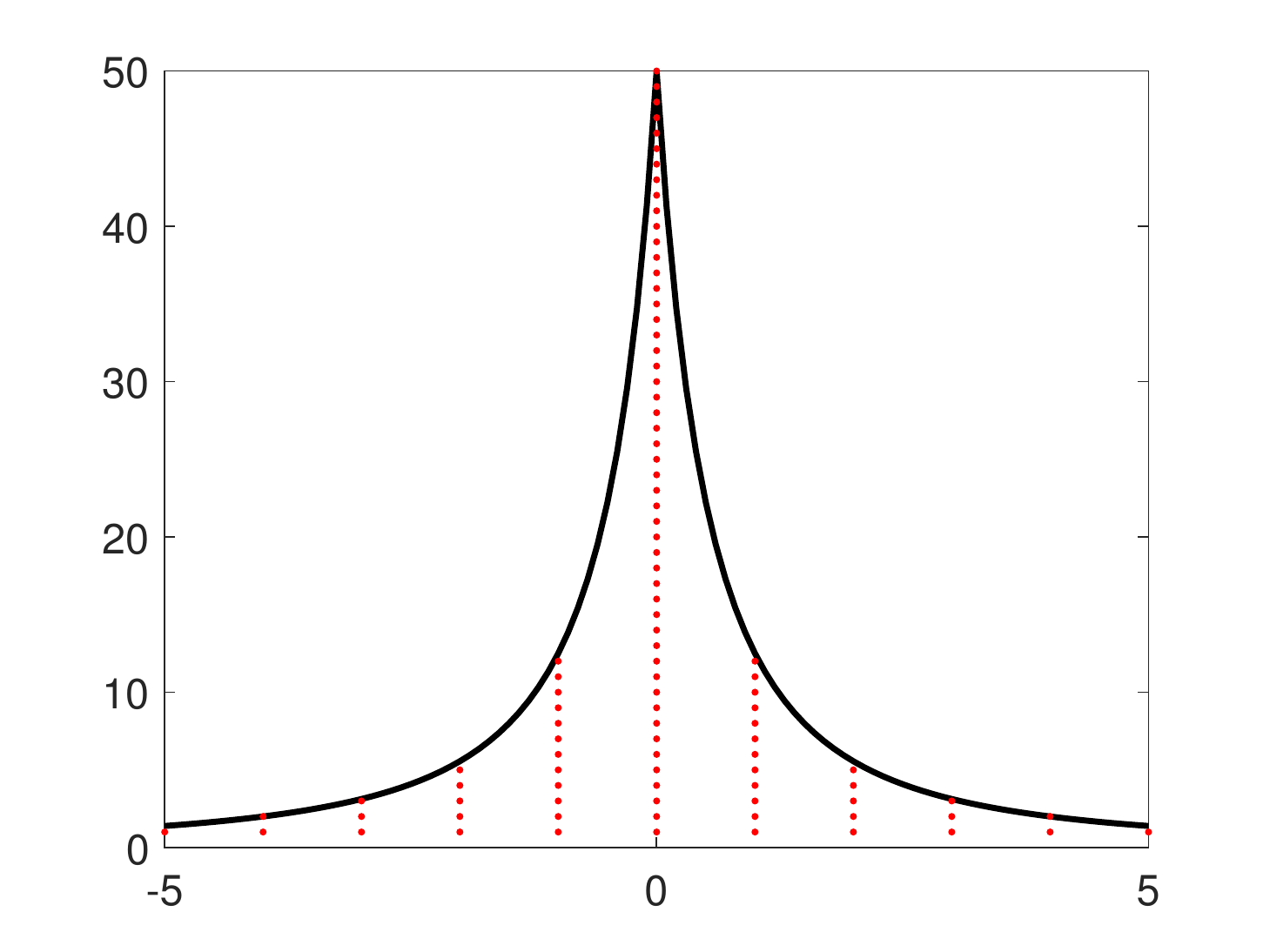}
\put (50,-2) {\small $i$}
\put (0,40) {\small $k$}
\put (40,72) {\small Known $\beta$}
\end{overpic}
\end{minipage}
\caption{Level curves of $h=50$ for the schedule criterion functions $h$ in \cref{rad_ord_cor1} (left panel), \cref{known_alpha} (middle panel) and \cref{known_beta} (right panel) with $c_1=c_2=2$. The level curves describe the search order. The red dots show the corresponding indices $(i,j,k)$ in the set defined in \eqref{eqn:restart-unknown-consts-t-lb}. The index $i$ indicates the parameter search value $a^i\alpha_0$ for $\alpha$. The index $j$ indicates the parameter search value $b^j\beta_0$ for $\beta$. The height (i.e., $k$) indicates the total number of inner iterations for a fixed $(i,j)$.}
\label{fig:schedule_examples}
\end{figure}

\subsection{The algorithm}

\begin{algorithm}[t]
\SetKwInOut{Input}{Input} 
\SetKwInOut{Output}{Output}
\SetKwComment{Comment}{// }{}
\Input{Optimization algorithm $\Gamma$ for \eqref{eqn:cvx-problem}, bijection $\phi$ as in \cref{def:grid-search-bijection}, initial vector $x^{(0)} \in D$, upper bound $\epsilon_0$ such that $f(x^{(0)}) - \hat{f} + g_Q(x^{(0)}) \leq \epsilon_0$, constants $a,b > 1$, $r\in (0,1)$, $\alpha_0>0$, $\beta_0\geq 1$ and total number of inner iterations $t \in \bbN$.}
\Output{Final iterate $x^{(t)}$ approximating a solution to \eqref{eqn:cvx-problem}.}
Initialize $x^{(0)}=x_0$, $U_{i,j} = 0$, $V_{i,j} = 0 $, $\epsilon_{i,j,0} = \epsilon_0$ for all $i\in \bbZ,j\in\mathbb{N}_{0}$\;
\For{$m = 0, 1, \ldots, t-1$}{
$(i,j,k) \gets \phi(m+1)$ \;
$\alpha_i \gets a^i\alpha_0, \ \beta_j \gets b^j\beta_0$, $U \gets U_{i,j}, \ V \gets V_{i,j}$\;
$\epsilon_{i,j,U+1} \gets r\epsilon_{i,j,U}$\;
\uIf{$2\epsilon_{i,j,U}>\alpha_i$}{
$\delta_{i,j,U+1}\gets\left(\frac{2\epsilon_{i,j,U}}{\alpha_i}\right)^{\min\left\{b/\beta_j,1/\beta_0\right\}}$\;
}
\Else{$\delta_{i,j,U+1}\gets\left(\frac{2\epsilon_{i,j,U}}{\alpha_i}\right)^{1/\beta_j}$\;}
\uIf{$V+\cC_\Gamma\left(\delta_{i,j,U+1}, \epsilon_{i,j,U+1} \right)\leq k$}{
$z^{(m)} \gets \Gamma \left(\delta_{i,j,U+1}, \epsilon_{i,j,U+1}, x^{(m)} \right)$\;
$x^{(m+1)} \gets \argmin{}\left\{ f(x) + g_Q(x) : x = z^{(m)} \ \mbox{or} \ x = x^{(m)} \right\}$\;
$V_{i,j}\gets V +\cC_\Gamma\left(\delta_{i,j,U+1}, \epsilon_{i,j,U+1} \right)$\;
$U_{i,j} \gets U+1$\;
}
\Else{
$x^{(m+1)} = x^{(m)}$\;
}
}
\caption{Restart scheme for unknown $\alpha$, $\beta$ and $\eta$ in \eqref{eqn:sharpness} via grid search.\label{alg:restart-unknown-constsC}}
\end{algorithm}

Given a set $S$, our general algorithm is presented in \cref{alg:restart-unknown-constsC}. It proceeds as follows. At step $m \in \{0,\ldots,t-1\}$ it first applies the bijection $\phi$ to obtain the tuple $(i,j,k) = \phi(m+1)$. The first two entries give the approximate sharpness parameter values $\alpha_i = a^i\alpha_0$ and $\beta_j = b^j\beta_0$. The final entry $k$ is a counter, which is an upper bound for the total number of iterations used by the algorithm for these parameter values. We also have two further counters for each double $(i,j)$. The counter $V_{i,j}$ counts the total number of inner iterations of $\Gamma$ used by the restart scheme with these parameters. The second counter $U_{i,j}$ counts the number of completed restarts (outer iterations) corresponding to these parameters.

Having obtained a tuple $(i,j,k) = \phi(m+1)$, the algorithm proceeds as follows. First,  much as in line 2 of \cref{alg:restart-known-consts}, it updates the first scaling parameter in line 5. Then, reminiscent of line 3 of \cref{alg:restart-known-consts}, it updates the other scaling parameter in lines 6-10. This step is more involved, a complication that arises because the true parameter $\beta$ is unknown.

The following lines, lines 11-16, are similar to lines 4-5 of \cref{alg:restart-known-consts}. The main difference is the inclusion of the if statement, which is done to control the computational cost. It stipulates that a restart be performed (line 12) if the total cost (including the proposed restart) does not exceed the counter $k$ (line 11). If this is not the case, no restart is performed, and the algorithm moves on to the next step.

Note that \cref{alg:restart-unknown-constsC} is sequential. However, one can readily devise a parallel implementation that runs Algorithm 1 in parallel over each pair $(i,j)$ and then minimizes $f + g_Q$ over all instances at the end of the process.

Before analyzing the cost of the algorithm, it is worth considering the special cases where either $\alpha$ or $\beta$ is known. Suppose first that $\beta = \beta_0$ is known, but $\alpha$ is unknown and let $S$ and $\phi$ be as in \eqref{S-alpha-unknown}-\eqref{h-assignment-alpha-unknown}. Then, we may eliminate the $j$-index from \cref{alg:restart-unknown-constsC} and write the algorithm more simply as \cref{alg:restart-unknown-alpha}. The first if-else statement from \cref{alg:restart-unknown-constsC} disappears in this case. Similarly, if $\alpha$ is known but $\beta$ is unknown, then we can employ $S$ and $\phi$ as in \eqref{S-beta-unknown}-\eqref{h-assignment-beta-unknown} and eliminate the $i$-index from \cref{alg:restart-unknown-constsC}. We present the result in \cref{alg:restart-unknown-beta}.

\begin{algorithm}[t]
\SetKwInOut{Input}{Input} 
\SetKwInOut{Output}{Output}
\SetKwComment{Comment}{// }{}
\Input{Optimization algorithm $\Gamma$ for \eqref{eqn:cvx-problem}, bijection $\phi$ as in \eqref{h-assignment-alpha-unknown}, initial vector $x^{(0)} \in D$, upper bound $\epsilon_0$ such that $f(x^{(0)}) - \hat{f} + g_Q(x^{(0)}) \leq \epsilon_0$ constants $a> 1$, $r\in (0,1)$, $\alpha_0>0$ and constant $\beta \geq 1$ such that \eqref{eqn:sharpness} holds, and total number of inner iterations $t \in \bbN$.}
\Output{Final iterate $x^{(t)}$ approximating a solution to \eqref{eqn:cvx-problem}.}
Initialize $x^{(0)}=x_0$, $U_{i} = 0$, $V_{i} = 0 $, $\epsilon_{i,0} = \epsilon_0$ for all $i\in \bbZ$\;
\For{$m = 0, 1, \ldots, t-1$}{
$(i,k) \gets \phi(m+1)$ \;
$\alpha_i \gets a^i\alpha_0$, $U \gets U_{i}, \ V \gets V_{i}$\;
$\epsilon_{i,U+1} \gets r\epsilon_{i,U}$\;
$\delta_{i,U+1} \gets\left(\frac{2\epsilon_{i,U}}{\alpha_i}\right)^{1/\beta}$\;
\uIf{$V+\cC_\Gamma\left(\delta_{i,U+1}, \epsilon_{i,U+1} \right)\leq k$}{
$z^{(m)} \gets \Gamma \left(\delta_{i,U+1}, \epsilon_{i,U+1}, x^{(m)} \right)$\;
$x^{(m+1)} \gets \argmin{}\left\{ f(x) + g_Q(x) : x = z^{(m)} \ \mbox{or} \ x = x^{(m)} \right\}$\;
$V_{i}\gets V +\cC_\Gamma\left(\delta_{i,U+1}, \epsilon_{i,U+1} \right)$\;
$U_{i} \gets U+1$\;
}
\Else{
$x^{(m+1)} = x^{(m)}$\;
}
}
\caption{Restart scheme for unknown $\alpha$ and $\eta$, known $\beta$ in \eqref{eqn:sharpness} via grid search. \label{alg:restart-unknown-alpha}}
\end{algorithm}

\begin{algorithm}[t]
\SetKwInOut{Input}{Input} 
\SetKwInOut{Output}{Output}
\SetKwComment{Comment}{// }{}
\Input{Optimization algorithm $\Gamma$ for \eqref{eqn:cvx-problem}, bijection $\phi$ as in \cref{def:grid-search-bijection}, initial vector $x^{(0)} \in D$, upper bound $\epsilon_0$ such that $f(x^{(0)}) - \hat{f} + g_Q(x^{(0)}) \leq \epsilon_0$, constant $\alpha > 0$ such that \eqref{eqn:sharpness} holds, constants $b > 1$, $r\in (0,1)$, $\beta_0\geq 1$ and total number of inner iterations $t \in \bbN$.}
\Output{Final iterate $x^{(t)}$ approximating a solution to \eqref{eqn:cvx-problem}.}
Initialize $x^{(0)}=x_0$, $U_{j} = 0$, $V_{j} = 0 $, $\epsilon_{j,0} = \epsilon_0$ for all $j\in\mathbb{N}_{0}$\;
\For{$m = 0, 1, \ldots, t-1$}{
$(j,k) \gets \phi(m+1)$ \;
$\beta_j \gets b^j\beta_0$, $U \gets U_{j}, \ V \gets V_{j}$\;
$\epsilon_{j,U+1} \gets r\epsilon_{j,U}$\;
\uIf{$2\epsilon_{j,U}>\alpha$}{
$\delta_{j,U+1}\gets\left(\frac{2\epsilon_{j,U}}{\alpha}\right)^{\min\left\{b/\beta_j,1/\beta_0\right\}}$\;
}
\Else{$\delta_{j,U+1}\gets\left(\frac{2\epsilon_{j,U}}{\alpha}\right)^{1/\beta_j}$\;}
\uIf{$V+\cC_\Gamma\left(\delta_{j,U+1}, \epsilon_{j,U+1} \right)\leq k$}{
$z^{(m)} \gets \Gamma \left(\delta_{j,U+1}, \epsilon_{j,U+1}, x^{(m)} \right)$\;
$x^{(m+1)} \gets \argmin{}\left\{ f(x) + g_Q(x) : x = z^{(m)} \ \mbox{or} \ x = x^{(m)} \right\}$\;
$V_{j}\gets V +\cC_\Gamma\left(\delta_{j,U+1}, \epsilon_{j,U+1} \right)$\;
$U_{j} \gets U+1$\;
}
\Else{
$x^{(m+1)} = x^{(m)}$\;
}
}
\caption{Restart scheme for known $\alpha$, unknown $\beta$ and $\eta$ in \eqref{eqn:sharpness} via grid search.\label{alg:restart-unknown-beta}}
\end{algorithm}

\rem{[\cref{alg:restart-unknown-constsC} reduces to \cref{alg:restart-known-consts} when $\alpha$ and $\beta$ are known]

Suppose that $\alpha$ and $\beta$ are both known. We may now eliminate the $i$- and $j$-indices from \cref{alg:restart-unknown-constsC}. Then the main for loop subsequently reduces to

\vspace{2mm}
\RestyleAlgo{plain}
\begin{algorithm*}[H]
\For{$m = 0, 1, \dots , t-1$}{
$k \gets \phi(m+1)$\;
$\epsilon_{U+1} \gets r \epsilon_U$\;
$\delta_{U+1} \gets \left ( \frac{2 \epsilon_U }{\alpha} \right )^{1/\beta}$\;
\uIf{$V+\cC_\Gamma\left(\delta_{U+1}, \epsilon_{U+1} \right)\leq k$}{
$z^{(m)} \gets \Gamma \left(\delta_{U+1}, \epsilon_{U+1}, x^{(m)} \right)$\;
$x^{(m+1)} \gets \argmin{}\left\{ f(x) + g_Q(x) : x = z^{(m)} \ \mbox{or} \ x = x^{(m)} \right\}$\;
$V\gets V +\cC_\Gamma\left(\delta_{U+1}, \epsilon_{U+1} \right)$\;
$U \gets U+1$\;
}
\Else{
$x^{(m+1)} = x^{(m)}$\;
}
}
\end{algorithm*}
\RestyleAlgo{ruled}
\vspace{2mm}

\noindent
We now observe that the algorithm performs a restart whenever the counter $k$ is sufficiently large (i.e., line 5). Thus, up to re-labeling, this is identical to \cref{alg:restart-known-consts}. In particular, the choice of the $h$-assignment $\phi$ does not influence the algorithm in this case.
}

\subsection{Cost analysis of the algorithm}

We now present a general result on this algorithm. It relates the total number of inner iterations of $\Gamma$ used by \cref{alg:restart-unknown-constsC} (to produce a solution within a desired error) to intrinsic properties of the schedule criterion function $h$. This will allow us to derive explicit bounds for specific choices of $h$.

\begin{theorem}\label{thm:restart-unknown-consts-err}
Let $S \subseteq \bbZ \times \bbN_0 \times \bbN$ be an infinite subset, $h$ be a schedule criterion, and $\phi$ an $h$-assignment over $S$. Let $\alpha$, $\beta$ and $\eta$ be approximate sharpness constants of $f$ in \eqref{eqn:sharpness}. Consider \cref{alg:restart-unknown-constsC} for fixed $a, b > 1$. Define the (unknown) indices
$$
I=\lfloor \log_a(\alpha/\alpha_0) \rfloor,\qquad J=\lceil \log_b(\beta/\beta_0) \rceil
$$
and the corresponding constants
$$
\alpha_*=a^I\alpha_0\leq \alpha,\qquad \beta_*=b^J\beta_0\geq\beta.
$$
For $q\in\mathbb{N}$ set
\begin{equation}
\label{delta_form_f}
\delta_{I,J,q}=\left[\max\left\{1,\frac{2r^{q-1}\epsilon_0}{\alpha_*}\right\}\right]^{\min\left\{{b}/{\beta_*},1/\beta_0\right\}}\left[\min\left\{1,\frac{2r^{q-1}\epsilon_0}{\alpha_*}\right\}\right]^{{1}/{\beta_*}}
\end{equation}
Now, for any $\varepsilon \in (0, \epsilon_0)$, let
\be{
\label{L-eps-def}
K(\varepsilon) := K(\varepsilon, \alpha, \beta, \eta) = \sum_{q=1}^{\lceil \log(\epsilon_0 / \varepsilon) / \log(1/r)\rceil}\cC_\Gamma\left(\delta_{I,J,q}, r^{q}\epsilon_0\right)
}
and suppose that $(I,J,K(\varepsilon)) \in S$. Then the total number of inner iterations of $\Gamma$ needed by \cref{alg:restart-unknown-constsC} to compute $x^{(t)}$ with
\eas{
    f(x^{(t)}) - \hat{f} + g_Q(x^{(t)}) \leq \max \{ \eta , \varepsilon \},
} 
is bounded by the cardinality of the set
\begin{equation}
\left\{(i',j',k') \in S : h(|i'|,j',k') \leq h\left( |I|,J, K(\varepsilon) \right) \right\}. \label{eqn:restart-unknown-consts-t-lb}
\end{equation}
In addition, if $\cC_\Gamma$ satisfies
\begin{equation}
\label{growth_cond_c}
\cC_\Gamma(\delta, \epsilon) \leq {C \delta^{d_1}}/{\epsilon^{d_2}}+1, \qquad C,d_1,d_2 > 0,
\end{equation}
for all $\delta, \epsilon > 0$, then
\begin{equation}
\label{L_bound}
\begin{split}
K(\varepsilon)\leq \left\lceil\frac{\log(\epsilon_0 /\varepsilon)}{\log(1/r)} \right\rceil+&\max\left\{\left(\frac{2\epsilon_0}{\alpha_*}\right)^{d_1\min\left\{\frac{b-1}{\beta_*},\frac{1}{\beta_0}-\frac{1}{\beta_*}\right\}},1\right\}\times\\
&\frac{C2^{d_1/\beta_*}}{\alpha_*^{d_1/\beta_*} r^{d_2}} \cdot 
\begin{cases}
\frac{1-r^{\lceil{\log(\epsilon_0 /\varepsilon)}/{\log(1/r)} \rceil|d_2-d_1/\beta_*|}
}{1-r^{|d_2-d_1/\beta_*|}}\cdot \frac{1}{\epsilon_0^{d_2-d_1/\beta_*}},\quad &\text{if $d_2<d_1/\beta_*$},\\
\left\lceil\frac{\log(\epsilon_0 /\varepsilon)}{\log(1/r)} \right\rceil,\quad &\text{if $d_2=d_1/\beta_*$},\\
\frac{1-r^{\lceil{\log(\epsilon_0 /\varepsilon)}/{\log(1/r)} \rceil|d_2-d_1/\beta_*|}
}{1-r^{|d_2-d_1/\beta_*|}}\cdot\frac{1}{\varepsilon^{d_2-d_1/\beta_*}},\quad &\text{if $d_2>d_1/\beta_*$}.
\end{cases}
\end{split}
\end{equation}
\end{theorem}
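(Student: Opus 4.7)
The overall strategy is to show that, at the (unknown) indices $(I,J)$, \cref{alg:restart-unknown-constsC} secretly executes a variant of \cref{alg:restart-known-consts} with parameters $(\alpha_*,\beta_*)$ that are compatible with the true approximate sharpness of $f$. Once this is established, the budget condition on line 11 ensures that by the outer step where $\phi(m+1)=(I,J,K(\varepsilon))$ is visited, exactly $Q^*(\varepsilon):=\lceil\log(\epsilon_0/\varepsilon)/\log(1/r)\rceil$ restarts have been completed at the $(I,J)$ triple (their total cost is, by definition, $K(\varepsilon)$), so the $(I,J)$-chain satisfies $f+g_Q-\hat f\leq\max\{\eta,\varepsilon\}$; the $\mathrm{argmin}$ step on line 13 then propagates this bound to $x^{(m+1)}$. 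The number of outer steps needed to reach $\phi^{-1}(I,J,K(\varepsilon))$ is, by the defining property \eqref{key_h_prop} of an $h$-assignment, at most the cardinality of the set in \eqref{eqn:restart-unknown-consts-t-lb}. Finally, because the budget condition forces $V_{i,j}\leq k$ whenever a restart is performed at $(i,j,k)$ and $k$ strictly increases with successive visits to a fixed $(i,j)$, the cumulative inner cost never exceeds the cumulative count of outer steps, so this cardinality also upper bounds the total number of $\Gamma$-iterations.

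The key technical step is to verify that at indices $(I,J)$ the scheme's $\delta_{I,J,U+1}$ is a valid upper bound on $d(x,\widehat X)$ after each restart, despite the fact that $\alpha_*\leq\alpha$ and $\beta_*\geq\beta$ (so the sharpness constants used are on the ``wrong side''). Proceeding inductively, assume $f(x)-\hat f+g_Q(x)\leq\epsilon_{I,J,U}$ with $\epsilon_{I,J,U}\geq\eta$, set $w=2\epsilon_{I,J,U}/\alpha$, $z=2\epsilon_{I,J,U}/\alpha_*$, so $w\leq z$ by $\alpha_*\leq\alpha$ and $d(x,\widehat X)\leq w^{1/\beta}$ by \eqref{eqn:sharpness}. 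In the small-error regime $z\leq 1$ we have $w\leq 1$ as well, so monotonicity of $t\mapsto w^{1/t}$ on $[\beta,\beta_*]$ for $w\leq 1$ and of $y\mapsto y^{1/\beta_*}$ yield $w^{1/\beta}\leq w^{1/\beta_*}\leq z^{1/\beta_*}=\delta_{I,J,U+1}$. In the large-error regime $z\geq 1$, the crux is the exponent inequality $1/\beta\leq\min\{b/\beta_*,1/\beta_0\}$, which reduces to $\beta\geq\beta_0$ (true by assumption) and $\beta_*\leq b\beta$. The latter is exactly the reason we defined $J=\lceil\log_b(\beta/\beta_0)\rceil$: it gives $b^{J-1}\beta_0<\beta$, hence $\beta_*=b^J\beta_0<b\beta$. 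Thus $w^{1/\beta}\leq z^{1/\beta}\leq z^{\min\{b/\beta_*,1/\beta_0\}}=\delta_{I,J,U+1}$, and in both regimes the hypothesis of \eqref{Gamma-delta-to-eps} is satisfied for $\Gamma(\delta_{I,J,U+1},\epsilon_{I,J,U+1},x)$. Combined with the argmin on line 13, this mirrors the warm-up analysis in \eqref{large_error_regime} and advances the induction.

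To derive \eqref{L_bound}, I would substitute the polynomial bound $\cC_\Gamma(\delta,\epsilon)\leq C\delta^{d_1}/\epsilon^{d_2}+1$ into the sum defining $K(\varepsilon)$. Writing $y_q=2r^{q-1}\epsilon_0/\alpha_*$, formula \eqref{delta_form_f} gives $\delta_{I,J,q}^{d_1}=y_q^{d_1/\beta_*}$ when $y_q\leq 1$ and $\delta_{I,J,q}^{d_1}=y_q^{d_1\min\{b/\beta_*,1/\beta_0\}}$ when $y_q\geq 1$; in the latter case, splitting the exponent as $1/\beta_*+\min\{(b-1)/\beta_*,1/\beta_0-1/\beta_*\}$ and using $y_q\leq 2\epsilon_0/\alpha_*$ bounds the extra factor uniformly by the $\max\{\cdot,1\}$ prefactor in \eqref{L_bound}. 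What remains is the geometric sum $\sum_{q=1}^{Q^*(\varepsilon)}r^{q(d_1/\beta_*-d_2)}$, which yields the three cases of \eqref{L_bound} by splitting on the sign of $d_2-d_1/\beta_*$ and, in the $d_2>d_1/\beta_*$ case, rewriting $\epsilon_0^{d_2-d_1/\beta_*}$ in terms of $\varepsilon$ via $\epsilon_0\geq\varepsilon r^{1-Q^*(\varepsilon)}$, exactly as in the proof of \cref{thm:restart-known-consts}. The only substantive obstacle is the sharpness verification in the second paragraph; once that is in hand, the counting and arithmetic are essentially mechanical.
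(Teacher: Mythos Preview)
Your proposal is correct and follows essentially the same approach as the paper's proof: both argue that at the grid point $(I,J)$ the algorithm runs a valid restart chain (with the key verification that $\delta_{I,J,U+1}$ dominates $((2\epsilon_{I,J,U})/\alpha_*)^{1/\beta}$ via the exponent inequalities $\beta\geq\beta_0$ and $\beta_*<b\beta$), that the budget test on line 11 guarantees $Q^*(\varepsilon)$ completed restarts once $k=K(\varepsilon)$ is reached, and that the $h$-assignment property together with $V_{i,j}\leq k$ converts the outer-step count into the cardinality bound \eqref{eqn:restart-unknown-consts-t-lb}. Your derivation of \eqref{L_bound}, including the exponent split $\min\{b/\beta_*,1/\beta_0\}=1/\beta_*+\min\{(b-1)/\beta_*,1/\beta_0-1/\beta_*\}$ and the geometric-sum casework via $\epsilon_0>\varepsilon r^{1-Q^*(\varepsilon)}$, also matches the paper's argument.
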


\begin{proof}
Since $\epsilon_{i,j,q-1}=r^{q-1}\epsilon_0$ for all $q \in\mathbb{N}$, \eqref{delta_form_f} must hold by considering the two separate cases defining $\delta_{I,J,q}$. Similar to the proof of \cref{thm:restart-known-consts}, we may assume without loss of generality that $\varepsilon\geq\eta$. Note that, due to \eqref{eqn:sharpness},
\ea{
d(x, \widehat{X}) \leq \left( \frac{f(x) - \hat{f} + g_Q(x) + \eta}{\alpha_*} \right)^{1/\beta}, \qquad \forall x \in D. \label{eqn:sharpness_unknown_const}
}
Now consider the following adapted version of the iterates in \cref{alg:restart-known-consts}:

\vspace{2mm}
\RestyleAlgo{plain}
\begin{algorithm*}[H]
\For{$p = 0, 1, \dots$}{
$\epsilon_{p+1} \gets r  \epsilon_p $ \;
\uIf{$2\epsilon_p >\alpha_*$}{
$\delta_{p+1}\gets\left(\frac{2\epsilon_p}{\alpha_*}\right)^{\min\left\{b/\beta_*,1/\beta_0\right\}}$ 
\;
}
\Else{$\delta_{p+1}\gets\left(\frac{2\epsilon_p}{\alpha_*}\right)^{1/\beta_*}$\;}
$z \gets \Gamma \left( \delta_{p+1}, \epsilon_{p+1}, x_p \right)$\;
$x_{p+1} \gets \argmin{} \left\{ f(x) + g_Q(x) : x = x_p \mbox{ or } x = z \right\}$\;
}
\end{algorithm*}
\RestyleAlgo{ruled}
\vspace{2mm}

\noindent{}It is easy to see inductively that for any $l$ with $\epsilon_l\geq \eta$ the above produces iterates $\{x_0, x_1, \ldots, x_l\}\subset D$ satisfying
\eas{
&f(x_p) - \hat{f} + g_Q(x_p)\leq \epsilon_p, \qquad d(x_p, \widehat{X}) \leq \delta_{p+1}, \qquad 0 \leq p \leq l.
}
The only difference to the previous argument for \cref{alg:restart-known-consts} is the use of \eqref{eqn:sharpness_unknown_const}, and the fact that
$$
\left( \frac{f(x_p) - \hat{f} + g_Q(x_p) + \eta}{\alpha_*} \right)^{1/\beta}\leq\left( \frac{2\epsilon_p}{\alpha_*} \right)^{1/\beta}\leq\begin{cases}
\left(\frac{2\epsilon_p}{\alpha_*}\right)^{\min\left\{b/\beta_*,1/\beta_0\right\}},\quad &\text{if $2\epsilon_p>\alpha_*$}\\
\left(\frac{2\epsilon_p}{\alpha_*}\right)^{1/\beta_*},\quad &\text{otherwise.}
\end{cases}
$$
Here, we use $\beta\geq \beta_0$ in the first case.

In \cref{alg:restart-unknown-constsC}, each $U_{i,j}$ plays the role of the index $p$ in the above iterates (i.e., counting the number of restarts for a fixed $(i,j)$) and $V_{i,j}$ counts the total number of inner iterations that have been executed by the algorithm $\Gamma$ for the approximate sharpness constants given by the double index $(i,j)$. The fact that we take minimizers of $f+g_Q$ across different indices does not alter the above inductive argument since the argument only depends on bounding the value of $f-\hat{f}+g_Q$. Moreover, since $h$ is strictly increasing in its final argument and satisfies \eqref{key_h_prop}, the counter index $k$ counts successively through $\mathbb{N}$ for any fixed $(i,j)$ as the for loop in \cref{alg:restart-unknown-constsC} proceeds. It follows that if $\phi(m+1)=(I,J,k)$, $V_{I,J} +\cC_\Gamma\left(\delta_{I,J,U_{I,J}+1}, \epsilon_{I,J,U_{I,J}+1}, x^{(m)}\right)\leq k$ and $\epsilon_{I,J,U_{I,J}}\geq \eta$, then
\be{
\label{err-bd-1-IJK}
f(x^{(m+1)})- \hat{f} + g_Q(x^{(m+1)})\leq \epsilon_{I,J,U_{I,J}+1} = r^{U_{I,J}+1} \epsilon_0.
}
Hence, for \cref{alg:restart-unknown-constsC} to produce an iterate with
\be{
\label{err-bd-2-IJK}
    f(x^{(t)}) - \hat{f} + g_Q(x^{(t)}) \leq \max \{ \eta , \varepsilon \},
}
it is sufficient to reach an $m$ with $\phi(m+1)=(I,J,k)$ such that
\begin{equation}
\label{jjnvio}
k\geq\sum_{q=1}^{\lceil \log(\epsilon_0 / \varepsilon) / \log(1/r)\rceil}\cC_\Gamma\left(\delta_{I,J,q}, \epsilon_{I,J,q}\right) = K(\varepsilon)
\end{equation}
and execute the resulting restart. To see why this is the case, notice that if $k$ satisfies this inequality, then the number of restart iterations performed by the algorithm for the parameter values $(I,J)$ is at least $\lceil \log(\epsilon_0 / \varepsilon) / \log(1/r)\rceil$. Plugging this into \eqref{err-bd-1-IJK} gives the desired bound \eqref{err-bd-2-IJK}.

Now consider the set in \eqref{eqn:restart-unknown-consts-t-lb}. Due to \eqref{key_h_prop}, we notice that this set is equivalent to
\bes{
\{ (i',j',k') \in S : \phi^{-1}(i',j',k') \leq m+1 \},
}
where $\phi(m+1) = (I,J,K(\varepsilon))$. Notice that if a triple $(i',j',k')$ belongs to this set, then $(i',j',k'')$ belongs to the set for every $1 \leq k'' \leq k'$. Thus, the number of terms in this set corresponding to the pair $(i',j')$ is precisely the total number of inner iterations performed by the algorithm at the corresponding parameter values up to step $m$. We immediately deduce that the cardinality of the set \eqref{eqn:restart-unknown-consts-t-lb} is a bound for the total number of inner iterations performed by the algorithm across all parameter values up to step $m$, as required.

To finish the proof, we must show that \eqref{L_bound} holds under the additional assumption \eqref{growth_cond_c} on $\cC_\Gamma$. Suppose first that $\delta_{I,J,q}>1$, then
\begin{align*}
\cC_\Gamma\left(\delta_{I,J,q}, r^{q}\epsilon_0\right) &\leq 
C\left(\frac{2r^{q-1}\epsilon_0}{\alpha_*}\right)^{d_1\min\left\{b/\beta_*,1/\beta_0\right\}}\left(r^{q}\epsilon_0\right)^{-d_2}+1\\
&\leq C\left(\frac{2\epsilon_0}{\alpha_*}\right)^{d_1[\min\left\{b/\beta_*,1/\beta_0\right\}-1/\beta_*]}\left(\frac{2r^{q-1}\epsilon_0}{\alpha_*}\right)^{d_1/\beta_*}\left(r^{q}\epsilon_0\right)^{-d_2}+1\\
&=\frac{C}{r^{d_2}}\left(\frac{2\epsilon_0}{\alpha_*}\right)^{d_1[\min\left\{b/\beta_*,1/\beta_0\right\}-1/\beta_*]}\left(\frac{2}{\alpha_*}\right)^{d_1/{\beta_*}}\left(r^{q-1}\epsilon_0\right)^{-d_2+d_1/{\beta_*}}+1.
\end{align*}
Similarly, if $\delta_{I,J,q}\leq 1$, then 
$$
\cC_\Gamma\left(\delta_{I,J,q}, r^{q}\epsilon_0\right)\leq \frac{C}{r^{d_2}}\left(\frac{2}{\alpha_*}\right)^{d_1/{\beta_*}}\left(r^{q-1}\epsilon_0\right)^{-d_2+d_1/{\beta_*}}+1.
$$
From \eqref{jjnvio}, it follows that
$$
K(\varepsilon)\leq\left\lceil\frac{\log(\epsilon_0 /\varepsilon)}{\log(1/r)} \right\rceil+ \max\left\{\left(\frac{2\epsilon_0}{\alpha_*}\right)^{d_1[\min\left\{b/\beta_*,1/\beta_0\right\}-1/\beta_*]},1\right\}\cdot  \frac{C2^{d_1/\beta_*}}{\alpha_*^{d_1/\beta_*} r^{d_2}}\cdot \sum_{k=0}^{\lceil \frac{\log(\epsilon_0 / \varepsilon)}{\log(1/r)}\rceil-1} \frac{1}{(r^k\epsilon_0)^{d_2 - d_1/\beta_*}}.
$$
We now note that the only difference between this bound for $K(\varepsilon)$ and the bound for $T$ in the proof of \cref{thm:restart-known-consts} is the factor that maximizes over the terms in curly brackets and the replacement of $\alpha$ and $\beta$ by $\alpha_*$ and $\beta_*$, respectively. The result follows from the same arguments as in the proof of \cref{thm:restart-known-consts}.
\end{proof}

\subsection{Choices of schedule criterion functions and assignments, and the proof of \cref{thm:MAIN}}
\label{sec:expansion_functions}

As revealed by the previous theorem, the total number of inner iterations of $\Gamma$ needed for \cref{alg:restart-unknown-constsC} depends on the choice of $h$ and $\phi$. We examine some choices and state them as corollaries. These choices correspond to those shown in \cref{fig:schedule_examples}. Combining these corollaries with \cref{thm:restart-unknown-consts-err}, we immediately obtain \cref{thm:MAIN}.

\begin{corollary}[Unknown $\alpha$ and $\beta$]
\label{rad_ord_cor1}
Suppose that $S = \bbZ \times \bbN_0 \times \bbN$ and let 
$$h(x_1,x_2,x_3) = (x_1+1)^{c_1}(x_2+1)^{c_2}x_3,\quad c_1,c_2>1$$
be a schedule criterion with $h$-assignment $\phi$. Then for any $\varepsilon \in (0,\epsilon_0)$, running \cref{alg:restart-unknown-constsC} with
$$
t \geq {2c_1c_2\tau}/{[(c_1-1)(c_2-1)]},\quad
\tau = (| \lfloor \log_a (\alpha/\alpha_0) \rfloor| + 1)^{c_1}(| \lceil \log_b (\beta/\beta_0) \rceil | + 1)^{c_2} K(\varepsilon),
$$
where $K(\varepsilon)$ is as in \R{L-eps-def}, implies that $$f(x^{(t)}) - \hat{f} + g_Q(x^{(t)}) \leq \max \{ \eta , \varepsilon \}.$$ 
\end{corollary}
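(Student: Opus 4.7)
The plan is to invoke \cref{thm:restart-unknown-consts-err} and reduce the claim to a counting estimate on the sublevel set \eqref{eqn:restart-unknown-consts-t-lb}. First, I verify the hypothesis $(I, J, K(\varepsilon)) \in S$: since $S = \bbZ \times \bbN_0 \times \bbN$, it suffices that $I \in \bbZ$ (immediate), that $J \in \bbN_0$ (which holds because $\beta \geq \beta_0$ forces $\log_b(\beta/\beta_0) \geq 0$), and that $K(\varepsilon) \geq 1$ (immediate from its definition in \eqref{L-eps-def}). \cref{thm:restart-unknown-consts-err} then guarantees the desired error bound once $t$ exceeds the cardinality of
\begin{equation*}
\mathcal{R} := \left\{(i', j', k') \in \bbZ \times \bbN_0 \times \bbN : (|i'|+1)^{c_1}(j'+1)^{c_2} k' \leq \tau \right\},
\end{equation*}
because $h(|I|, J, K(\varepsilon)) = (|I|+1)^{c_1}(J+1)^{c_2} K(\varepsilon) = \tau$ by construction.

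Next, I bound $|\mathcal{R}|$ directly. For each fixed $(i', j') \in \bbZ \times \bbN_0$, the number of admissible $k' \in \bbN$ is at most $\lfloor \tau / [(|i'|+1)^{c_1}(j'+1)^{c_2}] \rfloor$. Summing over $(i', j')$ and dropping the floor yields
\begin{equation*}
|\mathcal{R}| \leq \tau \cdot \Bigl(\sum_{i' \in \bbZ} (|i'|+1)^{-c_1}\Bigr) \cdot \Bigl(\sum_{j' \in \bbN_0} (j'+1)^{-c_2}\Bigr),
\end{equation*}
which is a product of two one-dimensional $p$-series. I estimate each factor via the integral test. For the first, splitting $i'=0$ from $|i'|\geq 1$ and using $\sum_{i=1}^\infty (i+1)^{-c_1} \leq \int_1^\infty x^{-c_1}\,dx = 1/(c_1-1)$ gives $1 + 2/(c_1-1) = (c_1+1)/(c_1-1) \leq 2c_1/(c_1-1)$, where the last inequality uses $c_1 \geq 1$. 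For the second, $\sum_{j=1}^\infty j^{-c_2} \leq 1 + \int_1^\infty x^{-c_2}\,dx = c_2/(c_2-1)$. Multiplying gives $|\mathcal{R}| \leq 2c_1 c_2 \tau / [(c_1-1)(c_2-1)]$, and the hypothesis on $t$ then closes the argument via \cref{thm:restart-unknown-consts-err}.

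The main obstacle is essentially bookkeeping: matching the integral-test estimates to the exact pre-factor $2c_1 c_2/[(c_1-1)(c_2-1)]$ in the statement, in particular the slight slack $c_1+1 \leq 2c_1$ that absorbs the contribution of $i'=0$ into the tail bound. The reason this works so cleanly is the product form of $h$, which makes the sublevel-set count decouple into independent sums over $i'$ and $j'$; this is precisely why such an $h$ is a natural choice when both $\alpha$ and $\beta$ must be searched over simultaneously, and it is the mechanism by which the unknown optimal indices $(I,J)$ contribute only the multiplicative factor $(|I|+1)^{c_1}(J+1)^{c_2}$ that is already packaged into $\tau$.
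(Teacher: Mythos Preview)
Your proof is correct and follows essentially the same approach as the paper: invoke \cref{thm:restart-unknown-consts-err}, then bound the cardinality of the sublevel set \eqref{eqn:restart-unknown-consts-t-lb} by decoupling into two $p$-series and estimating each via the integral test. The only cosmetic difference is that the paper first substitutes $n_1=|i'|+1$, $n_2=j'+1$, $n_3=k'$ to count solutions in $\bbN^3$ (obtaining $\tau c_1 c_2/[(c_1-1)(c_2-1)]$) and then picks up the factor $2$ from the $\pm$ ambiguity in $i'$, whereas you sum directly over $\bbZ\times\bbN_0$ and absorb the $i'=0$ contribution via $(c_1+1)/(c_1-1)\leq 2c_1/(c_1-1)$.
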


\begin{proof}
It suffices to prove that the stated lower bound on $t$ is an upper bound for the cardinality of the set \eqref{eqn:restart-unknown-consts-t-lb} from \cref{thm:restart-unknown-consts-err}. We do this by finding an upper bound on the number of solutions to $n_1^{c_1} n_2^{c_2} n_3 \leq \tau$ where $n_1, n_2, n_3 \in \bbN$. 
By directly counting, the number of solutions is bounded by
$$
\sum_{n_1=1}^{\tau^{1/{c_1}}} \sum_{n_2=1}^{\left(\frac{\tau}{{n_1^{c_1}}}\right)^{\frac{1}{{c_2}}}}\frac{\tau}{n_1^{c_1}n_2^{c_2}}  
\leq
\tau \sum_{n_1=1}^\infty \frac{1}{n_1^{c_1}} \sum_{n_2=1}^\infty\frac{1}{n_2^{c_2}}.
$$
We have that
$$
\sum_{n_1=1}^\infty\frac{1}{n_1^{c_1}}\leq 1+\int_1^\infty \frac{dx}{x^{c_1}}=\frac{c_1}{c_1-1}.
$$
It follows that the number of solutions is bounded by $\tau c_1 c_2/((c_1-1)(c_2-1))$. Each counted solution $(n_1,n_2,n_3)$ defines \textit{at most} two tuples $(i',j',k')$ in the set \eqref{eqn:restart-unknown-consts-t-lb}, namely $i' = \pm(n_1 - 1)$, $j' = n_2 - 1$, $k' = n_3$. In reverse, each tuple $(i', j', k')$ of the set \eqref{eqn:restart-unknown-consts-t-lb} is always associated with a single solution $(n_1,n_2,n_3)$, namely $n_1 = |i'| + 1$, $n_2 = j' + 1$, $n_3 = k'$. It then follows that that the set \eqref{eqn:restart-unknown-consts-t-lb} is bounded by $2 \tau c_1 c_2/((c_1-1)(c_2-1))$.
\end{proof}

We now consider the cases where either $\alpha$ or $\beta$ is known.

\begin{corollary}[Known $\alpha$]\label{known_alpha}
Suppose that $\alpha= a^i\alpha_0$. Let
$S = \{i\} \times \bbN_0 \times \bbN$
and $h(x_1,x_2,x_3) = (x_2+1)^{c_2}x_3$, $c_2>1$, be a schedule criterion. Then given any $h$-assignment $\phi$ and any $\varepsilon \in (0, \epsilon_0)$, running \cref{alg:restart-unknown-constsC} with 
$$
t \geq {c_2\tau}/{(c_2-1)}, \quad
\tau = (| \lceil \log_b (\beta/\beta_0) \rceil | + 1)^{c_2} K(\varepsilon),
$$
where $K(\varepsilon)$ is as in \R{L-eps-def}, implies that
$$
f(x^{(t)}) - \hat{f} + g_Q(x^{(t)}) \leq \max \{ \eta , \varepsilon \}.
$$ 
\end{corollary}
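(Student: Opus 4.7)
The plan is to mirror the proof of \cref{rad_ord_cor1}, specialized to the case where $\alpha$ lies on the grid so that the first index is frozen. First, I would invoke \cref{thm:restart-unknown-consts-err}: since $\alpha = a^i \alpha_0$, we have $I = i$ and $\alpha_* = \alpha$, so the triple $(I, J, K(\varepsilon))$ belongs to $S = \{i\} \times \bbN_0 \times \bbN$, and it suffices to bound the cardinality of the set
\bes{
\{(i',j',k') \in S : h(|i'|,j',k') \leq h(|I|,J,K(\varepsilon))\}
= \{(j',k') \in \bbN_0 \times \bbN : (j'+1)^{c_2} k' \leq (J+1)^{c_2} K(\varepsilon)\},
}
where I used that $h(x_1,x_2,x_3) = (x_2+1)^{c_2} x_3$ does not depend on its first argument and that $i'$ is forced to equal $i$ inside $S$. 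Writing $\tau = (J+1)^{c_2} K(\varepsilon) = (|\lceil \log_b(\beta/\beta_0)\rceil|+1)^{c_2} K(\varepsilon)$, this reduces the problem to counting positive integer pairs $(n_2,n_3)$, with $n_2 = j'+1$ and $n_3 = k'$, satisfying $n_2^{c_2} n_3 \leq \tau$.

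Next I would carry out the counting. For each $n_2 \in \{1,\dots,\lfloor \tau^{1/c_2} \rfloor\}$, the number of admissible $n_3$ is at most $\tau/n_2^{c_2}$, so the total count is bounded by
\bes{
\sum_{n_2=1}^{\lfloor\tau^{1/c_2}\rfloor} \frac{\tau}{n_2^{c_2}} \;\leq\; \tau \sum_{n_2=1}^\infty \frac{1}{n_2^{c_2}} \;\leq\; \tau\left(1 + \int_1^\infty \frac{dx}{x^{c_2}}\right) = \frac{c_2 \tau}{c_2 - 1},
}
using $c_2 > 1$ to ensure convergence. Because $i'$ is already pinned to $i$, each counted pair $(n_2,n_3)$ corresponds to exactly one triple $(i, n_2-1, n_3) \in S$; unlike \cref{rad_ord_cor1}, there is no sign ambiguity and hence no factor of $2$. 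Therefore the cardinality of the set is at most $c_2 \tau/(c_2-1)$, which matches the stated lower bound on $t$.

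The step I expect to be the only mild subtlety is the bookkeeping around the schedule criterion $h$ taking three arguments while $S$ is effectively two-dimensional: one needs to check that \cref{def:grid-search-bijection} still makes sense (strict monotonicity in $x_3$ is all that is required, and $h$ here is strictly increasing in its third argument) and that an $h$-assignment $\phi : \bbN \to S$ exists. Beyond that, the argument is a direct simplification of the proof of \cref{rad_ord_cor1}: one fewer grid dimension, one fewer geometric-type series, and the removal of the $\pm$ sign factor.
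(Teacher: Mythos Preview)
Your proposal is correct and follows exactly the approach the paper takes: the paper's proof simply says to modify the proof of \cref{rad_ord_cor1} by counting solutions to $n_2^{c_2} n_3 \leq \tau$ and matching them (now bijectively, with no $\pm$ sign doubling) to triples in the set \eqref{eqn:restart-unknown-consts-t-lb} with $i'$ fixed. You have in fact written out more detail than the paper gives, and your handling of the correspondence and the integral bound is precisely what is intended.
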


\begin{proof}
The result follows after modifying the proof of \cref{rad_ord_cor1}. First, find an upper bound to the number of solutions to $n_2^{c_2} n_3 \leq \tau$ for $n_2, n_3 \in \bbN$. Now find the correspondence between the solutions and the triples $(i',j',k')$ of \eqref{eqn:restart-unknown-consts-t-lb}, where $i'$ is now fixed.
\end{proof}

\begin{corollary}[Known $\beta$]\label{known_beta}
Suppose that $\beta= \beta_0$ is known, $S = \mathbb{Z}\times \{0\} \times \bbN$
and $h(x_1,x_2,x_3) = (x_1+1)^{c_1}x_3$, $c_1 > 1$, is a schedule criterion. Then given any $h$-assignment $\phi$ and any $\varepsilon \in (0, \epsilon_0)$, running \cref{alg:restart-unknown-constsC}
and
$$
t \geq {2c_1\tau}/{(c_1-1)}, \quad
\tau = (| \lfloor \log_a (\alpha/\alpha_0) \rfloor | + 1)^{c_1} K(\varepsilon),
$$
where $K(\varepsilon)$ is as in \R{L-eps-def}, implies that
$$
f(x^{(t)}) - \hat{f} + g_Q(x^{(t)}) \leq \max \{ \eta , \varepsilon \}.
$$
\end{corollary}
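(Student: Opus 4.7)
\medskip

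\noindent\textbf{Proof plan for \cref{known_beta}.}
The strategy mirrors the proofs of \cref{rad_ord_cor1} and \cref{known_alpha}: reduce the bound to a counting problem and invoke \cref{thm:restart-unknown-consts-err}. First, I would specialize that theorem to the present setting. Since $\beta = \beta_0$, we have $J = \lceil \log_b(\beta/\beta_0) \rceil = 0$, and every triple in $S = \bbZ \times \{0\} \times \bbN$ has second coordinate $0$. Consequently, the containment $(I,J,K(\varepsilon)) \in S$ holds automatically, and the set in \eqref{eqn:restart-unknown-consts-t-lb} simplifies to
$$
\bigl\{(i',0,k') \in S : (|i'|+1)^{c_1} k' \leq (|I|+1)^{c_1} K(\varepsilon)\bigr\} = \bigl\{(i',0,k') : (|i'|+1)^{c_1}k' \leq \tau\bigr\}.
$$
By \cref{thm:restart-unknown-consts-err}, bounding $t$ by the cardinality of this set suffices to guarantee $f(x^{(t)}) - \hat f + g_Q(x^{(t)}) \leq \max\{\eta, \varepsilon\}$.

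Next, I would bound the cardinality by counting positive integer solutions to $n_1^{c_1} n_3 \leq \tau$, since each $(i',k')$ in the set above corresponds to the pair $(n_1,n_3) = (|i'|+1, k')$. Directly,
$$
\#\{(n_1,n_3) \in \bbN^2 : n_1^{c_1} n_3 \leq \tau\} \;\leq\; \sum_{n_1=1}^{\lfloor \tau^{1/c_1}\rfloor} \frac{\tau}{n_1^{c_1}} \;\leq\; \tau \sum_{n_1=1}^\infty \frac{1}{n_1^{c_1}} \;\leq\; \tau\Bigl(1 + \int_1^\infty x^{-c_1}\,dx\Bigr) = \frac{\tau c_1}{c_1-1},
$$
exactly as in the proof of \cref{rad_ord_cor1}, where the integral bound requires $c_1 > 1$.

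Finally, I would account for the correspondence between solutions $(n_1,n_3)$ and triples $(i',0,k') \in S$: every such triple maps to a unique $(n_1,n_3) = (|i'|+1, k')$, while each $(n_1,n_3)$ lifts to \emph{at most} two triples via $i' = \pm(n_1-1)$ (with $n_1 = 1$ producing only $i' = 0$). Hence the cardinality of the index set is at most twice the solution count, giving the claimed bound
$$
t \;\geq\; \frac{2c_1\tau}{c_1 - 1}.
$$
There is no real obstacle here; the argument is a direct one-index specialization of \cref{rad_ord_cor1}, with the $j$-dependence collapsed. The only subtlety worth flagging is the factor of $2$ (absent in \cref{known_alpha}), which arises because $i'$ ranges over $\bbZ$ rather than $\bbN_0$.
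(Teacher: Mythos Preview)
Your proposal is correct and follows exactly the approach the paper indicates: reduce to counting solutions of $n_1^{c_1} n_3 \leq \tau$ via the integral bound from \cref{rad_ord_cor1}, then account for the two-to-one lift $i' = \pm(n_1-1)$ with $j'$ fixed at $0$. Your write-up is in fact more detailed than the paper's own proof, which simply refers back to \cref{rad_ord_cor1}.
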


\begin{proof}
Similar to the previous proof, the result follows after modifying the proof of \cref{rad_ord_cor1}. First, find an upper bound to the number of solutions to $n_1^{c_1} n_3 \leq \tau$ for $n_1, n_3 \in \bbN$. Now find the correspondence between the solutions and the triples $(i',j',k')$ of \eqref{eqn:restart-unknown-consts-t-lb}, where $j'$ is now fixed.
\end{proof}

To compute an $h$-assignment $\phi$ for \cref{rad_ord_cor1,known_alpha,known_beta}, let us first make some observations in the most general case. From the setup of \cref{def:grid-search-bijection}, consider the equivalence relation $\sim$ over $S$ where $(i_1,j_1,k_1) \sim (i_2,j_2,k_2)$ if and only if $h(\absu{i_i}, j_1, k_1) = h(\absu{i_2}, j_2, k_2)$. The range of $h(\absu{\cdot},\cdot,\cdot)$ over $S$ is countable and has a least element since the arguments of $h$ are non-decreasing by \cref{def:grid-search-bijection}. Therefore, the equivalence classes can be ordered where $S/\sim \ = \{[\bm{g}_1],[\bm{g}_2],[\bm{g}_3],\dots\}$ where for all $i,j$, if $i < j$, then $h(|i_1|,j_1,k_1) < h(|i_2|,j_2,k_2)$ for all $(i_1,j_1,k_1) \in [\bm{g}_i]$, $(i_2,j_2,k_2) \in [\bm{g}_j]$. All $h$-assignments $\phi$ are then determined by this ordering, in the sense that
$$
\phi(n) \in [\bm{g}_m] \iff \sum_{i=1}^{m-1} \# [\bm{g}_i] < n \leq \sum_{i=1}^m \# [\bm{g}_i], \qquad \forall m,n \in \bbN.
$$
We use this observation to compute an assignment function in the context of \cref{rad_ord_cor1}. The procedure is analogous for \cref{known_alpha,known_beta} as they are special cases. As in \cref{rad_ord_cor1}, we have $S = \bbZ \times \bbN_0 \times \bbN$ and
$$
h(x_1,x_2,x_3) = (x_1+1)^{c_1}(x_2+1)^{c_2}x_3,\quad c_1,c_2 >1.
$$
If $c_1, c_2$ are positive integers, as in our numerical experiments, the range of $h(\absu{\cdot}, \cdot, \cdot)$ is precisely $\bbN$, and the equivalence classes can be described by
$$
[\bm{g}_m] = \left\{(i,j,k) \in S : m = h(\absu{i}, j, k) = (\absu{i} + 1)^{c_1} (j + 1)^{c_2} k \right\}, \qquad m \in \bbN.
$$
In this instance, every equivalence class is finite. For any $m$, computing $[\bm{g}_m]$ (and in turn $\phi$) amounts to finding all (finitely many) solutions to the nonlinear equation
$$
m = (\absu{i} + 1)^{c_1} (j + 1)^{c_2} k, \qquad (i,j,k) \in S.
$$
Using the change of variables $y_1 = \absu{i} + 1$, $y_2 = j + 1$, $y_3 = k$, we obtain the Diophantine equation
$$
m = y_1^{c_1}y_2^{c_2}y_3, \qquad y_1, y_2, y_3 \in \bbN.
$$
Our algorithmic approach to find $y_1, y_2, y_3$ is by brute force. Since we know $y_1 \leq \sqrt[c_1]{m}$, $y_2 \leq \sqrt[c_2]{m}$, and $y_3 \leq m$, there are only $m^{\frac{1}{c_1} + \frac{1}{c_2} + 1}$ candidate solutions to check. After finding the solutions, one obtains the elements of $[\bm{g}_m]$ by reverting the change of variables, yielding $i = \pm (y_1 - 1)$, $j = y_2 - 1$, $k = y_3$. The resulting solutions can be listed in arbitrary order, which yields an instance of $\phi$.

Finally, to emphasize the generality of our algorithm, we consider the case where $\alpha$ and $\beta$ are known to lie within explicit ranges. In this case, we modify the set $S$ based on these ranges and choose a schedule criterion function $h(x_1,x_2,x_3)$ depending on $x_3$ only. The following result is immediate.

\begin{corollary}[Known ranges for $\alpha$, $\beta$]\label{known_ranges_search}
Suppose we have integers
$$i_\mathrm{min} \leq i_\mathrm{max}, \quad 0 \leq j_\mathrm{min} \leq j_\mathrm{max},$$
for which 
$$\alpha \in [a^{i_\mathrm{min}}\alpha_0, a^{i_\mathrm{max}}\alpha_0], \quad \beta \in [b^{j_\mathrm{min}}\beta_0, b^{j_\mathrm{max}}\beta_0].$$
Let
$$S = \{i_\mathrm{min}, i_\mathrm{min} + 1, \dots, i_\mathrm{max}\} \times \{j_\mathrm{min}, j_\mathrm{min} + 1, \dots, j_\mathrm{max}\} \times \bbN ,$$
and $h(x_1,x_2,x_3) = x_3$ be a schedule criterion. Then given any $h$-assignment $\phi$ and any $\varepsilon \in (0, \epsilon_0)$, running \cref{alg:restart-unknown-constsC} with
$$t \geq (i_\mathrm{max} - i_\mathrm{min} + 1)(j_\mathrm{max} - j_\mathrm{min} + 1)K(\varepsilon),$$
where $K(\varepsilon)$ is as in \R{L-eps-def}, implies $f(x^{(t)}) - \hat{f} + g_Q(x^{(t)}) \leq \max \{ \eta , \varepsilon \}$.
\end{corollary}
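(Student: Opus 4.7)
The plan is to invoke \cref{thm:restart-unknown-consts-err} directly, since the corollary is designed as a specialization of that theorem to the case in which the search grid is a known finite rectangle and $h$ encodes no preference between parameter cells. First I would verify that $h(x_1,x_2,x_3) = x_3$ meets the requirements of \cref{def:grid-search-bijection}: it is (vacuously) non-decreasing in its first two arguments and strictly increasing in $x_3$, so an $h$-assignment $\phi$ over $S$ exists.

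Next, I would check that the unknown indices $I = \lfloor \log_a(\alpha/\alpha_0) \rfloor$ and $J = \lceil \log_b(\beta/\beta_0) \rceil$ from \cref{thm:restart-unknown-consts-err} lie inside the restricted rectangle, so that $(I,J,K(\varepsilon)) \in S$. The hypothesis $\alpha \in [a^{i_\mathrm{min}}\alpha_0, a^{i_\mathrm{max}}\alpha_0]$ gives $i_\mathrm{min} \leq \log_a(\alpha/\alpha_0) \leq i_\mathrm{max}$, and because both endpoints are integers, applying the floor preserves both inequalities; hence $I \in \{i_\mathrm{min},\ldots,i_\mathrm{max}\}$. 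The analogous argument using the ceiling places $J \in \{j_\mathrm{min},\ldots,j_\mathrm{max}\}$.

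With the hypotheses of \cref{thm:restart-unknown-consts-err} verified, the total iteration bound is the cardinality of the sublevel set \eqref{eqn:restart-unknown-consts-t-lb}. Since $h$ depends only on $x_3$, the condition $h(|i'|,j',k') \leq h(|I|,J,K(\varepsilon))$ collapses to $k' \leq K(\varepsilon)$, independent of $i'$ and $j'$. Thus the set becomes the Cartesian product
$$
\{i_\mathrm{min},\ldots,i_\mathrm{max}\}\times \{j_\mathrm{min},\ldots,j_\mathrm{max}\}\times \{1,\ldots,K(\varepsilon)\},
$$
of cardinality $(i_\mathrm{max}-i_\mathrm{min}+1)(j_\mathrm{max}-j_\mathrm{min}+1) K(\varepsilon)$, which is exactly the claimed lower bound on $t$. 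There is essentially no obstacle: with $h$ independent of the parameter indices the search is exhaustive over the finite rectangle, and the only delicate point is that integrality of the range endpoints is what keeps $I$ and $J$ inside the grid after floor/ceiling. No new estimates beyond \cref{thm:restart-unknown-consts-err} are required.
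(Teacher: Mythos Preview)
Your proposal is correct and matches the paper's approach: the paper simply states that the result is immediate (no proof is given), and your argument spells out precisely why, by invoking \cref{thm:restart-unknown-consts-err}, checking that $(I,J,K(\varepsilon))\in S$ via the integrality of the range endpoints, and counting the sublevel set for $h(x_1,x_2,x_3)=x_3$.
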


\subsection{Comparison with the cost in~\cref{thm:restart-known-consts}}
\label{sec:getridofalgebraic}

We compare the cost in~\cref{rad_ord_cor1} to that of~\cref{thm:restart-known-consts} under the assumption \R{growth_cond_c}. Let $\hat{K}(\varepsilon)$ be the cost in \eqref{eqn:known-consts-total-iters}. Then
\begin{align}
K(\varepsilon)&\lesssim \hat{K}(\varepsilon)\begin{cases} 1,&\quad \text{if $\beta=\beta_*$ or $d_2\leq d_1/\beta_*$},\\
\frac{1}{\varepsilon^{d_1(1/\beta-1/\beta_*)}},&\quad \text{otherwise}.
\end{cases}
\end{align}
It follows that if $\beta=\beta_*$ or $d_2\leq d_1/\beta_*$, the cost of \cref{alg:restart-unknown-constsC} is of the same order as $\hat{K}(\varepsilon)$. If neither of these hold, then the cost of \cref{alg:restart-unknown-constsC} is of the order of $\varepsilon^{-d_1(1/\beta-1/\beta_*)}$ times the cost of \cref{alg:restart-known-consts}. Note that the order of this extra algebraic dependence can be made arbitrarily small by taking $b$ close to $1$, at the expense of a factor in the term $\tau$ that grows as $\log_b (\beta/\beta_0)^{c_2}$.

One can also remove this extra algebraic factor by letting the base $b$ depend on $\epsilon$ (in which case $\epsilon$ now becomes an input to the restart scheme). Specifically, let $b = 1 + 1/\log(\varepsilon^{-1})$. Corollary \ref{rad_ord_cor1} implies the iteration bound
$$
t \geq {2c_1c_2\tau}/{[(c_1-1)(c_2-1)]},\quad
\tau = (| \lfloor \log_a (\alpha/\alpha_0) \rfloor| + 1)^{c_1}(| \lceil \log_b (\beta/\beta_0) \rceil | + 1)^{c_2} K(\varepsilon),
$$
where $K(\varepsilon)$ is as in \R{L-eps-def}, and in particular, if $\cC_{\Gamma}$ satisfies \R{growth_cond_c} then $K(\varepsilon)$ satisfies \R{L_bound}. We now analyze this bound in the limit $\varepsilon \downarrow 0$. First, recall that
\bes{
\beta \leq \beta_* \leq b \beta.
}
In particular, $\beta_* \rightarrow \beta$ as $\varepsilon\downarrow 0$. Also, observe that
\bes{
\log_b(\beta / \beta_0) \sim \log(\beta/\beta_0) \log(\varepsilon^{-1}),\quad \varepsilon\downarrow0.
}
Suppose now that $d_2 \geq d_1/\beta$. Since we also have $d_2 \geq d_1 / \beta_*$, then we may apply \R{L_bound} to get that
\bes{
K(\varepsilon) \leq \hat{C} \left ( \log(\epsilon_0 / \varepsilon) + (\varepsilon^{-1})^{d_2-d_1/\beta_*} \right ),
}
for all sufficiently small $\varepsilon$, where the constant $\hat{C}$ depends on $C,r,\alpha_0,\alpha,\beta_0,\beta,d_1,d_2$. Since
\bes{
d_2 - d_1 / \beta_* \leq d_2 - d_1 / (b \beta),
}
we obtain
\eas{
(\varepsilon^{-1})^{ d_2-d_1/\beta_*} &\leq (\varepsilon^{-1})^{d_2-d_1/(b \beta) }
\\
& = \exp \left [ \log(\varepsilon^{-1}) \left ( d_2 - \frac{d_1}{(1+1/\log(\varepsilon^{-1})) \beta} \right ) \right ]
\\
& = \exp \left [ \frac{ \log(\varepsilon^{-1})}{1+1/\log(\varepsilon^{-1})} \left ( d_2 - \frac{d_1}{\beta} \right ) + d_2 \frac{1}{1+1/\log(\varepsilon^{-1})} \right ]
\\
& \leq \E^{d_2} \varepsilon^{d_1/\beta-d_2} 
}
for all sufficiently small $\varepsilon$. Hence
\bes{
K(\varepsilon) \leq \hat{C} \left ( \log(\epsilon_0 / \varepsilon) + \varepsilon^{d_1/\beta-d_2} \right ) .
}
Therefore, the total iteration bound satisfies
\bes{
t \geq \tilde{C} \begin{cases} (\log(\varepsilon^{-1}))^{1+c_2} & d_2 = d_1/\beta \\ (\log(\varepsilon^{-1}))^{c_2} \varepsilon^{d_1/\beta-d_2} & d_2 \geq d_1/\beta \end{cases} .
}
for small $\varepsilon$, where $\tilde{C}$ also depends on $c_1,c_2$.

As discussed above and in \cref{sec:complexity-bounds}, the version of our restart scheme with fixed $b$ may miss the optimal algebraic rate when $\beta \neq \beta_*$. The above approach, in which $\varepsilon$ becomes an input and the base $b$ is scaled accordingly, restores the optimal algebraic rates, up to the logarithmic term $(\log(\varepsilon^{-1}))^{c_2}$, where the constant $c_2>1$ can be chosen arbitrarily close to $1$ (recall Corollary \ref{rad_ord_cor1}).

\section{Examples and the complexity bounds of \cref{rates_table}}
\label{sec:FOM_examples}

We now present examples of first-order methods that can be used in our restart scheme for different problem settings, including the methods that lead to the various complexity bounds in \cref{rates_table}. We do this by explicitly deriving an algorithm $\Gamma : \bbR_{++} \times \bbR_{++} \times D \rightarrow D$ that satisfies \R{Gamma-delta-to-eps} and give an explicit bound for the cost function $\cC_{\Gamma}(\delta,\epsilon,x_0)$ of the form $C \delta^{d_1} / \epsilon^{d_2}+1$ for suitable $d_1$ and $d_2$.

This section is organized into five subsections, each corresponding to a row of \cref{rates_table}. In each subsection, we first define the class of functions or problems considered and describe the first-order method considered. We then provide a standard lemma on the convergence of the method before showing in a proposition how to convert this method into $\Gamma$ of the form needed for our restart scheme and deriving a suitable cost function $\cC_{\Gamma}$. The rates in the corresponding line of \cref{rates_table} follow directly from this proposition and \cref{rad_ord_cor1}.

\begin{remark}[Optimization over $\mathbb{C}$]
In convex analysis and continuous optimization, it is standard to consider function inputs lying in a finite-dimensional vector space over $\bbR$. The results described below are extended to $\bbC$ since some of the experiments shown in \cref{sec:num-exp} naturally consider complex numbers. For instance, Magnetic Resonance images are usually complex-valued. To this end, we briefly describe the main facets of optimization over $\bbC$.

We are interested in the domain of $f$ being a subset of $\bbC^n$ while its range is, of course, a subset of $\bbR$. Hence, we consider the natural isomorphism between $\bbC^n$ and $\bbR^{2n}$ given by: if $z = x + \I y \in \bbC^n$ with $x,y \in \bbR^n$, then $z \mapsto (x,y)$. We refer to $z$ as the complex representation and $(x,y)$ as the real representation. Now, one proceeds to do convex analysis and continuous optimization in the real representation, then express the results in the equivalent complex representation. Fortunately, not much needs to change (at least symbolically) when switching between real and complex representations. 

For example, the Euclidean inner products $\ip{\cdot}{\cdot}$ have to be substituted with their real part, i.e., $\ip{\cdot}{\cdot}_{\bbR} := \Re\ip{\cdot}{\cdot}$. Another example pertains to the differentiability of $f$. Specifically, for $x,y \in \bbR^n$, we say that $f$ is differentiable at $z = x + \I y \in D \subseteq \bbC^n$ if and only if $\Re(f)$ is (real) differentiable at $(x,y)$. To define the gradient, denote $\nabla_x$ and $\nabla_y$ as the vector of partial derivatives corresponding to variables $x$ and $y$, respectively. Then $\nabla f := \nabla_x \Re(f) + \I \nabla_y \Re(f)$, noting that because $f$ is real-valued, we have $\Im(f) \equiv 0$. Other parts of convex analysis, such as convexity, functions, proximal mappings, subgradients, also extend to a complex vector domain by applying the definitions to the real representation of complex vectors.\hfill$\blacklozenge$
\end{remark}

\subsection{Row 1 of \cref{rates_table}: Nesterov's method for \texorpdfstring{$L$-smooth functions}{Lg}}
\label{sec:alg_NESTA_smoothing}

For the first row of \cref{rates_table}, we consider Nesterov's method \cite{nesterov2005smooth}, an accelerated projected gradient descent algorithm for general constrained convex optimization problems. Specifically, the algorithm aims to solve \eqref{eqn:cvx-problem} in the special case when $f$ is convex and $L$-smooth:

\begin{definition}
\label{L_smooth}
A function $f : \bbC^n \rightarrow \bbR$ is \textit{$L$-smooth} over $Q \subseteq \bbC^n$ if it is differentiable in an open set containing $Q$, and for all $x$, $y$ in this set, its gradient $\nabla f$ has the Lipschitz property
\begin{align*}
\nmu{\nabla f(x) - \nabla f(y) }_{\ell^2} \leq L \nmu{x - y}_{\ell^2}.\tag*{$\blacktriangle$}
\end{align*}
\end{definition}

Nesterov's method is given in \cref{alg:nesterov}. The algorithm uses the notion of a \textit{prox-function} $p$. Here $p : Q \rightarrow \bbR$ is a proper, closed, and strongly convex function with strong convexity constant $\sigma_p > 0$, that, in addition, satisfies $\min_{x \in Q} p(x) = 0$. Let $x_0 = \mathrm{argmin}_{x \in Q} p(x)$ be the unique minimizer of $p$. To make this dependence explicit, we write $p(\cdot) = p(\cdot;x_0)$. A common and simple choice of prox-function is $p(x;x_0) = \frac{1}{2} \nmu{x - x_0}_{\ell^2}^2$ with $\sigma_p = 1$. This will be useful when we express Nesterov's method \textit{with smoothing}, in terms of $\Gamma$. We now state Nesterov's main result that gives a bound for $f(x_k) - f(x)$ for any $x \in Q$.

\begin{algorithm}[t]
\DontPrintSemicolon
\SetKwInOut{Input}{Input} \SetKwInOut{Output}{Output}
\Input{An $L$-smooth function $f$ and closed, convex set $Q \subseteq \bbC^n$ as in \eqref{eqn:cvx-problem}, prox-function $p(\cdot;x_0)$ with strong convexity constant $\sigma_p$ and unique minimizer $x_0 \in Q$, sequences $\{\gamma_j\}_{j=0}^{\infty}$ and $\{\tau_j\}_{j=0}^{\infty}$, and number of iterations $N$.
}
\Output{The vector $x_N$, which estimates a minimizer of \eqref{eqn:cvx-problem}.}
$z_0\leftarrow x_0$ \\
\For{$j = 0, 1, \ldots, N-1$}{
$x_{j+1} \leftarrow \argmin{x \in Q} \frac{L}{2} \nm{x - z_j}_{\ell^2}^{2} + \ip{\nabla f(z_j)}{x - z_j}_{\bbR}$ \\
$v_j \leftarrow \argmin{x \in Q} \frac{L}{\sigma_p} p(x;x_0) + \sum_{i=0}^{j} \gamma_i \ip{\nabla f(z_i)}{x - z_i}_{\bbR}$ \\
$z_{j+1} \leftarrow \tau_j v_j + (1 - \tau_j) x_{j+1}$
}
\caption{Nesterov's method} \label{alg:nesterov}
\end{algorithm}

\begin{lemma}[Nesterov's theorem]\label{lem:Nesterov's theorem}
Let $Q \subseteq \bbC^n$ be nonempty, closed, and convex, $f$ a convex $L$-smooth function over $Q$. In addition, let $p : Q \rightarrow \bbR$ be a proper, closed, and strongly convex function over $Q$ with strong convexity constant $\sigma_p > 0$ with $\min_{x \in Q} p(x) = 0$. Then \cref{alg:nesterov} with
\bes{
    \gamma_j = \frac{j+1}{2}, \qquad \tau_j = \frac{2}{j+3}, \qquad x_0 = \argmin{x \in Q}\ p(x),
}
generates a sequence $\{x_k\}_{k=1}^\infty \subset Q$ satisfying
\begin{align}
    f(x_k) - f(x) \leq \frac{4 L p(x;x_0)}{k(k+1) \sigma_p}, \qquad \forall x \in Q.
\end{align}
\end{lemma}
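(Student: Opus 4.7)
The plan is to follow Nesterov's estimate-sequence argument, adapted to the given prox-function $p$. Introduce cumulative weights $S_k := \sum_{i=0}^{k-1}\gamma_i = k(k+1)/4$ and the model functions
\[
\psi_k(x) := \frac{L}{\sigma_p}\,p(x;x_0) + \sum_{i=0}^{k-1}\gamma_i\bigl[f(z_i) + \ip{\nabla f(z_i)}{x - z_i}_{\bbR}\bigr].
\]
Each $\psi_k$ is $L$-strongly convex over $Q$ (inheriting strong convexity from $\tfrac{L}{\sigma_p}p$), so $\psi_k^\star := \min_{x \in Q}\psi_k(x)$ is attained uniquely at $v_{k-1}$ from line 4 of \cref{alg:nesterov}. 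The theorem reduces to the sandwich
\[
S_k f(x_k) \;\leq\; \psi_k^\star \;\leq\; \frac{L}{\sigma_p}\,p(x;x_0) + S_k f(x), \qquad \forall x \in Q,
\]
which rearranges directly, via $S_k = k(k+1)/4$, to the claimed rate.

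The right-hand inequality is immediate: the tangent bound $f(z_i) + \ip{\nabla f(z_i)}{x - z_i}_{\bbR} \leq f(x)$ holds termwise by convexity, so $\psi_k(x) \leq \tfrac{L}{\sigma_p}p(x;x_0) + S_k f(x)$, after which one minimizes the left-hand side in $x\in Q$. The substance lies in the left-hand inequality $\psi_k^\star \geq S_k f(x_k)$, which I would prove by induction on $k$; the base case $k=0$ is trivial since $S_0 = 0$ and $\psi_0 \geq 0$.

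For the inductive step, combine three ingredients. First, $L$-strong convexity of $\psi_k$ at $v_{k-1}$ together with the inductive hypothesis gives $\psi_k(x) \geq S_k f(x_k) + \tfrac{L}{2}\nm{x - v_{k-1}}_{\ell^2}^2$. Second, writing $\psi_{k+1}(x) = \psi_k(x) + \gamma_k[f(z_k) + \ip{\nabla f(z_k)}{x - z_k}_{\bbR}]$, applying the convexity bound $f(x_k) \geq f(z_k) + \ip{\nabla f(z_k)}{x_k - z_k}_{\bbR}$, and minimizing the resulting quadratic lower bound in $x$ produces
\[
\psi_{k+1}^\star \geq S_{k+1} f(z_k) + \ip{\nabla f(z_k)}{S_k x_k + \gamma_k v_{k-1} - S_{k+1} z_k}_{\bbR} - \tfrac{\gamma_k^2}{2L}\nm{\nabla f(z_k)}_{\ell^2}^2.
\]
Third, the descent lemma applied to the projected gradient step in line 3 controls $f(x_{k+1})$ in terms of $f(z_k)$ and $\nm{x_{k+1} - z_k}_{\ell^2}$.

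The main obstacle, and where the specific choices $\gamma_k = (k+1)/2$ and $\tau_k = 2/(k+3)$ enter, is the final algebraic bookkeeping. One verifies by elementary computations using $S_k = k(k+1)/4$ the identity $S_{k+1} z_k = S_k x_k + \gamma_k v_{k-1}$ (equivalent to $\tau_{k-1} = \gamma_k/S_{k+1}$), which cancels the cross-term above, and the inequality $\gamma_k^2 \leq S_{k+1}$, which together with the three-point variational relation for the projected gradient step lets the residual $\tfrac{\gamma_k^2}{2L}\nm{\nabla f(z_k)}_{\ell^2}^2$ be absorbed into $S_{k+1}(f(z_k) - f(x_{k+1}))$. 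This yields $\psi_{k+1}^\star \geq S_{k+1} f(x_{k+1})$, closing the induction and thereby the proof.
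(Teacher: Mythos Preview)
The paper does not give its own proof of this lemma: it simply observes that it is \cite[Theorem 2]{nesterov2005smooth} with two cosmetic changes (dropping boundedness of $Q$, and allowing any $x\in Q$ rather than only minimizers), neither of which affects Nesterov's original argument. Your estimate-sequence outline is precisely that original argument, so in spirit you are aligned with the paper's (cited) proof. The identities $S_k=k(k+1)/4$, $S_{k+1}z_k=S_kx_k+\gamma_kv_{k-1}$, and $\gamma_k^2\le S_{k+1}$ are all correct, and the sandwich $S_kf(x_k)\le\psi_k^\star\le\tfrac{L}{\sigma_p}p(x;x_0)+S_kf(x)$ is exactly what one wants.

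There is, however, a genuine gap in your closing step when $Q\neq\bbC^n$. After you minimize the quadratic lower bound on $\psi_{k+1}$ \emph{unconstrained} in $x$ to obtain the displayed inequality with residual $-\tfrac{\gamma_k^2}{2L}\nm{\nabla f(z_k)}_{\ell^2}^2$, you then need $S_{k+1}\bigl(f(z_k)-f(x_{k+1})\bigr)\ge\tfrac{\gamma_k^2}{2L}\nm{\nabla f(z_k)}_{\ell^2}^2$. Even using $\gamma_k^2\le S_{k+1}$, this would require $f(z_k)-f(x_{k+1})\ge\tfrac{1}{2L}\nm{\nabla f(z_k)}_{\ell^2}^2$ (or nearly so), which is the unconstrained descent bound and fails in general for the \emph{projected} gradient step: take $z_k$ already a constrained minimizer with $\nabla f(z_k)\neq 0$ pointing out of $Q$; then $x_{k+1}=z_k$ and the left side is $0$. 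The ``three-point variational relation'' you invoke does not produce a bound in terms of $\nm{\nabla f(z_k)}_{\ell^2}$, so it cannot rescue this route as written.

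The fix is to avoid the unconstrained minimization altogether. From your lower bound $\psi_{k+1}(x)\ge S_{k+1}f(z_k)+\gamma_k\ip{\nabla f(z_k)}{x-v_{k-1}}_{\bbR}+\tfrac{L}{2}\nm{x-v_{k-1}}_{\ell^2}^2$ (which already uses the coupling identity), substitute $w:=\tau_{k-1}x+(1-\tau_{k-1})x_k\in Q$, so that $w-z_k=\tau_{k-1}(x-v_{k-1})$. Using $\gamma_k=\tau_{k-1}S_{k+1}$ and $\gamma_k^2\le S_{k+1}$ (equivalently $1/\tau_{k-1}^2\ge S_{k+1}$), the right side is at least $S_{k+1}\bigl[f(z_k)+\ip{\nabla f(z_k)}{w-z_k}_{\bbR}+\tfrac{L}{2}\nm{w-z_k}_{\ell^2}^2\bigr]$. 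Since $w\in Q$ and $x_{k+1}$ minimizes this bracket over $Q$, the bracket is $\ge f(z_k)+\ip{\nabla f(z_k)}{x_{k+1}-z_k}_{\bbR}+\tfrac{L}{2}\nm{x_{k+1}-z_k}_{\ell^2}^2\ge f(x_{k+1})$ by $L$-smoothness. This closes the induction without ever producing the problematic $\nm{\nabla f(z_k)}_{\ell^2}^2$ term.
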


\cref{lem:Nesterov's theorem} consists of two modifications of \cite[Theorem 2]{nesterov2005smooth}. First, we do not assume $Q$ is bounded, as the results in the original work do not use this. Second, we allow $x \in Q$ instead of $x \in \widehat{X}$. The proof in the original work does not use the optimality of $x$ and only requires $x$ to be feasible. We utilize this property when considering Nesterov's method with smoothing. The following is now immediate.

\begin{proposition} \label{prop:nesterov-Gamma}
Let $Q \subseteq \bbC^n$ be nonempty, closed and convex, $f$ a convex $L$-smooth function over $Q$ (\cref{L_smooth}).
Given input $(\delta,\epsilon,x_0)\in \bbR_+  \times \bbR_+ \times Q$, let $\Gamma(\delta,\epsilon,x_0)$ be the output of \cref{alg:nesterov} with
$$
p(x;x_0)=\frac{1}{2}\nmu{x-x_0}_{\ell^2}^2,\quad \gamma_j = \frac{j+1}{2}, \quad \tau_j = \frac{2}{j+3}, \quad N=\left\lceil \frac{\delta\sqrt{2L}}{\sqrt{\epsilon} } \right \rceil.
$$
Then \R{Gamma-delta-to-eps} holds with $g_{Q} \equiv 0$. Specifically, 
\begin{equation}
\label{NESTA_fits1}
f(\Gamma(\delta,\epsilon,x_0)) - \hat{f} \leq \epsilon,\quad \forall x_0\in Q\text{ with }d(x_0, \widehat{X}) \leq \delta,
\end{equation}
where $d$ is the metric induced by the $\ell^2$-norm. It follows that we can take
\begin{equation}
\label{NESTA_cost1}
\cC_\Gamma(\delta, \epsilon)= \left\lceil \frac{\delta\sqrt{2L}}{\sqrt{\epsilon} } \right \rceil.
\end{equation}
\end{proposition}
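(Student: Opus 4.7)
The plan is to directly apply Nesterov's theorem (\cref{lem:Nesterov's theorem}) with the specific choice of prox-function $p(x;x_0) = \tfrac{1}{2}\|x-x_0\|_{\ell^2}^2$ and then verify that the prescribed $N$ drives the error below $\epsilon$. Since $x_0 \in Q$ is given, this $p$ is strongly convex over $Q$ with $\sigma_p = 1$, attains its minimum $0$ over $Q$ at $x_0$, and the requirement $x_0 = \mathrm{argmin}_{x\in Q} p(x;x_0)$ is automatically satisfied. Hence all the hypotheses of \cref{lem:Nesterov's theorem} are met, and the theorem yields
\[
f(x_k) - f(x) \leq \frac{4L\, p(x;x_0)}{k(k+1)\sigma_p} = \frac{2L\|x - x_0\|_{\ell^2}^2}{k(k+1)}, \qquad \forall x \in Q.
\]

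Next, I would choose $x = \hat{x}$, where $\hat{x} \in \widehat{X}$ is a minimizer of $f$ over $Q$ realizing (or approximating to within arbitrary tolerance) the distance $d(x_0, \widehat{X}) \leq \delta$. Since $\widehat{X} \subseteq Q$, this is a valid test point, and $\|x_0 - \hat{x}\|_{\ell^2} \leq \delta$. Substituting into the bound above gives
\[
f(x_k) - \hat{f} \leq \frac{2L\delta^2}{k(k+1)} \leq \frac{2L\delta^2}{k^2}.
\]
With $k = N = \lceil \delta\sqrt{2L}/\sqrt{\epsilon}\rceil$, we have $N \geq \delta\sqrt{2L/\epsilon}$, so $N^2 \geq 2L\delta^2/\epsilon$, whence $f(x_N) - \hat{f} \leq \epsilon$.

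To conclude, I would observe that every iterate $x_j$ produced by \cref{alg:nesterov} is the output of an $\mathrm{argmin}$ over $Q$ (line 3) and thus lies in $Q$, so the feasibility gap $g_Q(x_N) = 0$. Combined with the objective-error bound above, this gives $f(\Gamma(\delta,\epsilon,x_0)) - \hat{f} + g_Q(\Gamma(\delta,\epsilon,x_0)) \leq \epsilon$, which is \eqref{NESTA_fits1} (with $g_Q \equiv 0$). The cost function \eqref{NESTA_cost1} then follows simply because each iteration of \cref{alg:nesterov} counts once and the loop runs $N$ times. There is no substantial obstacle here — the argument is a routine specialization of Nesterov's convergence rate and a quantitative inversion to determine the number of iterations. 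The only care needed is checking that the infimum definition of $d(x_0,\widehat{X})$ allows one to pick a minimizer $\hat{x}$ with $\|x_0 - \hat{x}\|_{\ell^2} \leq \delta$, which is standard when $\widehat{X}$ is closed; otherwise one takes a minimizing sequence and passes to the limit in the inequality.
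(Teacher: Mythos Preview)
Your proposal is correct and follows exactly the approach the paper intends: it states the proposition is ``now immediate'' from \cref{lem:Nesterov's theorem}, and your argument is precisely that immediate specialization (choose $p(x;x_0)=\tfrac12\|x-x_0\|_{\ell^2}^2$ so $\sigma_p=1$, take $x=\hat{x}\in\widehat{X}$ with $\|x_0-\hat{x}\|_{\ell^2}\leq\delta$, and invert the bound). Your additional remarks about feasibility of the iterates and closedness of $\widehat{X}$ match the paper's surrounding discussion.
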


\cref{prop:nesterov-Gamma} shows that we can take $d_1=1$ and $d_2=1/2$ in the cost bound \eqref{growth_cond_c} for Nesterov's method (without smoothing). If $f$ is $L-$smooth and satisfies \eqref{eqn:sharpness} with $\eta=0$, then $\beta\geq 2$. It follows that we can take $\beta_0=2$. \cref{thm:restart-unknown-consts-err} and \cref{rad_ord_cor1} now imply the rates in the first row of \cref{rates_table}. Note that if $L$ is unknown, it is standard to employ line searches.

Several other remarks are in order. First, in Nesterov's method, the iterates $x_j$ are always feasible since the corresponding update step returns a point in $Q$. Thus, in \cref{prop:nesterov-Gamma}, we do not have to define $g_Q$ since $\Gamma$ trivially satisfies \eqref{Gamma-delta-to-eps} with $g_{Q} \equiv 0$. Finally, the requirement $x_0\in Q$ can be relaxed in Nesterov's method. For instance, we only require $f$ is $L$-smooth over the union of $Q$ and an open neighborhood of $x_0$ for some $L > 0$ to start with $x_0 \notin Q$.

\subsection{Row 2 of \cref{rates_table}: Nesterov's method for \texorpdfstring{$(u,v)$-smoothable functions}{Lg}}
\label{Nesta_smooth_sec}

We can extend Nesterov's method to solve \eqref{eqn:cvx-problem} without assuming that $f$ is differentiable. This is done via \textit{smoothing}. For this, we need the following definition from {\cite[Definition 10.43]{beck2017first}} (extended to functions with complex-vector domains).

\begin{definition} \label{def:uv-smoothable}
Let $u, v > 0$. 
A convex function $f : \bbC^n \rightarrow \bbR$ is called \textit{$(u, v)$-smoothable} if for any $\mu > 0$ there exists a convex differentiable function $f_\mu : \bbC^n \rightarrow \bbR$ such that
\begin{enumerate}
\item $f_\mu(x) \leq f(x) \leq f_\mu(x) + v \mu$ for all $x \in \bbC^n$
\item $f_\mu$ is $\frac{u}{\mu}$-smooth over $\bbC^n$
\end{enumerate}
The function $f_\mu$ is referred to as a \textit{$\frac{1}{\mu}$-smooth approximation} of $f$ with parameters $(u, v)$, and $\mu$ is referred to as the \textit{smoothing parameter}.\hfill$\blacktriangle$
\end{definition}

Smoothing is a framework that approximates $f$ arbitrarily closely by a family of smooth functions, i.e., functions with Lipschitz gradients. This means that we can apply Nesterov's method to a smooth approximation of $f$ and also analyze the objective error in terms of $f$. The following provides a modified version of \cref{lem:Nesterov's theorem} for $(u,v)$-smoothable $f$, and is proven in \cref{sec_append_nestarov}.

\begin{lemma}
[Nesterov's theorem for smoothable functions]
\label{lem:nesterovs-thm-smoothable-f}
Let $f : \bbC^n \rightarrow \bbR$ be a convex $(u,v)$-smoothable function. Given any $\mu > 0$, let $f_\mu$ be a $\frac{1}{\mu}$-smooth approximation of $f$ with parameters $(u,v)$. Then taking $Q$, $p$, $\gamma_j$, $\tau_j$, $x_0$ as in \cref{lem:Nesterov's theorem} and applying \cref{alg:nesterov} to the function $f_\mu$ produces a sequence $\{x_k\}_{k=1}^\infty$ satisfying
\ea{f(x_k) - f(x) \leq \frac{4 u p(x;x_0)}{\mu k(k+1) \sigma_p} + v \mu , \qquad x \in Q.}
\end{lemma}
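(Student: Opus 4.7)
The plan is to apply \cref{lem:Nesterov's theorem} to the smoothed surrogate $f_\mu$ rather than to $f$ itself, and then convert the resulting suboptimality bound for $f_\mu$ into one for $f$ via the two defining inequalities of $(u,v)$-smoothability. The only structural difference from the setting of \cref{lem:Nesterov's theorem} is that \cref{alg:nesterov} is now run on $f_\mu$ in place of $f$. Since by \cref{def:uv-smoothable} the surrogate $f_\mu$ is convex and $(u/\mu)$-smooth on $\bbC^n$ (hence on any closed convex $Q \subseteq \bbC^n$), and since the prescribed choices of prox-function $p(\cdot;x_0)$, step parameters $\gamma_j = (j+1)/2$, $\tau_j = 2/(j+3)$, and starting point $x_0 = \mathrm{argmin}_{x\in Q} p(x)$ do not depend on the objective, the hypotheses of \cref{lem:Nesterov's theorem} transfer verbatim with Lipschitz constant $L$ replaced by $u/\mu$.

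Concretely, I would first invoke \cref{lem:Nesterov's theorem} on $f_\mu$ to obtain
\[
f_\mu(x_k) - f_\mu(x) \leq \frac{4(u/\mu)\, p(x;x_0)}{k(k+1)\, \sigma_p} = \frac{4 u\, p(x;x_0)}{\mu\, k(k+1)\, \sigma_p}, \qquad \forall x \in Q.
\]
Next, I would apply the two smoothability inequalities from \cref{def:uv-smoothable}: at the iterate, $f(x_k) \leq f_\mu(x_k) + v\mu$, and at the comparison point, $f_\mu(x) \leq f(x)$ for $x \in Q \subseteq \bbC^n$. Subtracting the second from the first gives $f(x_k) - f(x) \leq f_\mu(x_k) - f_\mu(x) + v\mu$, and substituting the previous display yields the claimed bound.

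There is no real obstacle: the argument is essentially a two-line sandwich between the smoothability inequalities and Nesterov's rate on $f_\mu$. The only point warranting care is that the $v\mu$ term does not vanish as $k \to \infty$, so the bound is genuinely two-parameter in $(k,\mu)$; this is precisely the trade-off that will be exploited in the subsequent construction of $\Gamma$ and its cost function $\cC_\Gamma$ by balancing $\mu$ against the target accuracy $\epsilon$ (analogously to \cref{prop:nesterov-Gamma}).
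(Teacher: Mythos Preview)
Your proposal is correct and follows essentially the same approach as the paper: apply \cref{lem:Nesterov's theorem} to $f_\mu$ with Lipschitz constant $u/\mu$, then sandwich using the two inequalities in part 1 of \cref{def:uv-smoothable}. The paper's proof is nearly identical, just more tersely stated.
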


The following proposition shows that Nesterov's method with smoothing can be formulated as an algorithm $\Gamma$ in our framework and is proven in \cref{sec_append_nestarov}.

\begin{proposition} \label{prop:nesterov-smoothing-Gamma}
Let $Q \subseteq \bbC^n$ be nonempty, closed and convex, and $f : \bbC^n \rightarrow \bbR$ a convex $(u,v)$-smoothable function (\cref{def:uv-smoothable}). Given input $(\delta, \epsilon, x_0) \in \bbR_+ \times \bbR_+ \times Q$, let $\Gamma(\delta, \epsilon, x_0)$ be the output of \cref{alg:nesterov} applied to function $f_\mu$ with 
$$\mu = \frac{\epsilon}{2 v}, \quad p(x;x_0) = \frac{1}{2} \nmu{x - x_0}_{\ell^2}^2, \quad \gamma_j = \frac{j+1}{2}, \quad \tau_j = \frac{2}{j+3}, \quad N = \left\lceil \frac{2 \sqrt{2uv} \cdot \delta}{\epsilon} \right \rceil. $$
Then
$$ f(\Gamma(\delta, \epsilon, x_0)) - \hat{f} \leq \epsilon, \qquad \forall x_0\in Q \ \mbox{satisfying} \ d(x_0, \widehat{X}) \leq \delta, $$
where $d$ is the metric induced by the $\ell^2$-norm. It follows that we can set
$$ \cC_{\Gamma} (\delta, \epsilon, x_0) = \left \lceil \frac{2 \sqrt{2uv} \cdot \delta}{\epsilon} \right \rceil .$$
\end{proposition}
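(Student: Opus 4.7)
The plan is to specialize Lemma \ref{lem:nesterovs-thm-smoothable-f} to the choices of $p$, $\sigma_p$, and $\mu$ prescribed in the statement, bound the objective error at the minimizer of $f$, and finally turn the resulting bound on $k$ into the claimed iteration complexity.

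First, I would apply Lemma \ref{lem:nesterovs-thm-smoothable-f} with the quadratic prox-function $p(x;x_0)=\tfrac12\nm{x-x_0}_{\ell^2}^{2}$, whose strong convexity constant is $\sigma_p=1$, and whose unique minimizer is $x_0$ (so the condition $x_0=\arg\min p$ in Lemma \ref{lem:Nesterov's theorem} is automatic). This gives, for every $x\in Q$ and every $k\geq 1$,
\[
f(x_k)-f(x)\ \leq\ \frac{2u\nm{x-x_0}_{\ell^2}^{2}}{\mu\,k(k+1)}+v\mu.
\]
Next, since $\widehat{X}$ is non-empty and closed (as the set of minimizers of a proper, closed convex function over a closed convex set), I may choose $\hat{x}\in\widehat{X}$ attaining $\nm{\hat{x}-x_0}_{\ell^2}=d(x_0,\widehat{X})\leq\delta$, and plug $x=\hat{x}$ into the inequality to obtain
\[
f(x_k)-\hat{f}\ \leq\ \frac{2u\,\delta^{2}}{\mu\,k(k+1)}+v\mu.
\]

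Substituting the prescribed smoothing parameter $\mu=\epsilon/(2v)$ turns the right-hand side into $\tfrac{4uv\delta^{2}}{\epsilon\,k(k+1)}+\tfrac{\epsilon}{2}$. To force the total error to be at most $\epsilon$, it suffices to make the first summand at most $\epsilon/2$, i.e.\ to require $k(k+1)\geq 8uv\delta^{2}/\epsilon^{2}$. Using $k(k+1)\geq k^{2}$, any integer $k\geq 2\sqrt{2uv}\,\delta/\epsilon$ works; in particular $N=\lceil 2\sqrt{2uv}\,\delta/\epsilon\rceil$ suffices. This gives $f(\Gamma(\delta,\epsilon,x_0))-\hat{f}\leq\epsilon$ for every $x_0\in Q$ with $d(x_0,\widehat{X})\leq\delta$.

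Finally, since line 3 of Algorithm \ref{alg:nesterov} performs a projection onto $Q$, all iterates $x_j$ remain in $Q$, so $g_Q(x_N)=0$ and the feasibility gap contributes nothing to \eqref{Gamma-delta-to-eps}. Hence \eqref{Gamma-delta-to-eps} holds with $g_Q\equiv 0$ and the stated iteration count gives the cost bound $\cC_{\Gamma}(\delta,\epsilon)=\lceil 2\sqrt{2uv}\,\delta/\epsilon\rceil$. There is no real obstacle here: the argument is a direct specialization of Lemma \ref{lem:nesterovs-thm-smoothable-f}. The only point worth flagging is the balancing step, where one must choose $\mu$ and $N$ so that the two terms $\tfrac{2u\delta^{2}}{\mu k(k+1)}$ and $v\mu$ are simultaneously $\lesssim\epsilon$; the choice $\mu=\epsilon/(2v)$ balances $v\mu$ at $\epsilon/2$ and then determines $N$ through the remaining half of the budget.
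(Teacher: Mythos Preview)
Your proof is correct and follows essentially the same approach as the paper's: apply Lemma~\ref{lem:nesterovs-thm-smoothable-f} with the quadratic prox-function at a minimizer $\hat{x}\in\widehat{X}$, use $k(k+1)\geq k^2$, and substitute $\mu=\epsilon/(2v)$ to balance the two terms at $\epsilon/2$ each. The only cosmetic difference is the order in which you substitute $\mu$ versus apply $k(k+1)\geq k^2$; your additional remarks on the closedness of $\widehat{X}$ and on feasibility of the iterates are welcome but not strictly needed for the argument.
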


This result shows that we can take $d_1=1$ and $d_2=1$ in \R{growth_cond_c} in the case of Nesterov's method with smoothing. \cref{thm:restart-unknown-consts-err} and \cref{rad_ord_cor1} now imply the rates in the second row of \cref{rates_table}. Note that, for example, Lipschitz functions are smoothable \cite{beck2017first}.

\subsection{Row 3 of \cref{rates_table}: The universal fast gradient method}
\label{sec:FGM}

\begin{algorithm}[t]
\DontPrintSemicolon
\SetKwInOut{Input}{Input} \SetKwInOut{Output}{Output} \SetKwRepeat{Do}{do}{while}
\Input{Parameters $\epsilon > 0$, $L_0 > 0$, $\phi_0(x) = 0$, $y_0 = x_0$, $A_0 = 0$.}
\Output{The vector $x_N$, which estimates a minimizer of \eqref{UFGM_1}.}
\For{$k = 0, 1, \dots, N$}{
$v_k \leftarrow \mathrm{prox}_{\phi_k,Q}(x_0)$ \;
$i_k \leftarrow -1$ \;
\Do{$q(y_{k\! +\! 1,i_k})\! >\!  q(x_{k\! +1\! ,i_k})\!  +\!  \ip{\nabla q(x_{k\! +\! 1,i_k})}{y_{k\! +\! 1,i_k}\! -\! x_{k\! +\! 1,i_k}}_{\bbR}\!  +\!  2^{i_k\! -\! 1} L_k \nmu{y_{k\! +\! 1,i_k}\! -\! x_{k\! +\! 1,i_k}}_{\ell^2}^2\!  +\!  \frac{\epsilon}{2} \tau_{k,i_k}\! \! \! \!\! \!\! \! \! \! \! \! \! \! \! \!   $}
{
$i_k \leftarrow i_k + 1$ \;
Compute $a_{k+1,i_k}$ from the equation $a_{k+1,i_k}^2 = \frac{1}{2^{i_k} L_k}(A_k + a_{k+1,i_k})$. \\
$A_{k+1,i_k} \leftarrow A_k + a_{k+1,i_k}$ \;
$\tau_{k,i_k} \leftarrow a_{k+1,i_k}/A_{k+1,i_k}$ \;
$x_{k+1,i_k} \leftarrow \tau_{k,i_k} v_k + (1-\tau_{k,i_k}) y_k$ \;
Choose a subgradient $\nabla q(x_{k+1,i_k}) \in \partial q(x_{k+1,i_k})$. \\
$\hat{\phi}_{k+1,i_k}(x) \leftarrow a_{k+1,i_k}[\ip{\nabla q(x_{k+1,i_k})}{x}_{\bbR} + g(x)]$ \;
$\hat{x}_{k+1,i_k} \leftarrow \mathrm{prox}_{\hat{\phi}_{k+1,i_k},Q}(v_k)$ \;
$y_{k+1,i_k} \leftarrow \tau_{k,i_k} \hat{x}_{k+1,i_k} + (1 - \tau_{k,i_k})y_k$ \;
}
$x_{k+1} \leftarrow x_{k+1,i_k}$, $y_{k+1} \leftarrow y_{k+1,i_k}$, $a_{k+1} \leftarrow a_{k+1,i_k}$, $\tau_k \leftarrow \tau_{k,i_k}$ \\
$A_{k+1} \leftarrow A_k + a_{k+1}$, $L_{k+1} \leftarrow 2^{i_k-1} L_k$ \\
$\phi_{k+1}(x) \leftarrow \phi_k(x) + a_{k+1}[q(x_{k+1}) + \ip{\nabla q(x_{k+1})}{x-x_{k+1}}_{\bbR} + g(x)]$.
}
\caption{Universal fast gradient method} \label{alg:UFGM}
\end{algorithm}

We next consider H\"older smooth functions, which are a natural way of interpolating between non-smooth and smooth objective functions.

\defn{
\label{def:holder-smooth22}
A convex function $q : \bbC^n \rightarrow \bbR$ is H\"older smooth over $Q \subseteq \bbC^n$ with parameter $\nu \in [0,1]$ if
\begin{equation}
\nmu{\nabla q(x) - \nabla q(y) }_{\ell^2} \leq M_\nu \nmu{x - y}_{\ell^2}^\nu, \qquad \forall \ x, y \in Q, \nabla q(x)\in\partial q(x),\nabla q(y)\in\partial q(y).\tag*{$\blacktriangle$}
\end{equation}
}

We consider the universal fast gradient method \cite{nesterov2015universal} for the problem
\begin{equation}
\label{UFGM_1}
\min_{x \in Q} f(x),\qquad f(x) : = q(x) + g(x),
\end{equation}
where $q$ is a proper convex function that is H\"older smooth for some $\nu\in[0,1]$, and $g$ is a closed convex function whose proximal map,
$$
\mathrm{prox}_{c g,Q}(x)=\argmin{y \in Q} \left\{ c\cdot g(y)+\frac{1}{2}\nmu{x - y}_{\ell^2}^2 \right\},
$$
is straightforward to compute. The iterates of the universal fast gradient method are summarized in \cref{alg:UFGM}.

\begin{lemma}[Theorem 3 of \cite{nesterov2015universal}]
Let $Q \subseteq \bbC^n$ be nonempty, closed and convex, $q$ a proper convex function that is H\"older smooth for some $\nu\in[0,1]$ and $M_\nu<\infty$ (\cref{def:holder-smooth22}), and $g$ a closed convex function. Then \cref{alg:UFGM} generates a sequence $\{x_k\}_{k=1}^\infty \subset Q$ satisfying
\begin{equation}
\label{UFGM_bound}
    f(x_k) - \hat f \leq \left(\frac{2^{2+4\nu}M_\nu^2}{\epsilon^{1-\nu}k^{1+3\nu}}\right)^{\frac{1}{1+\nu}}\frac{d(x_0, \widehat{X})^2}{2}+\frac{\epsilon}{2}, \qquad \forall x \in Q,
\end{equation}
where $d$ is the metric induced by the $\ell^2$-norm.
\end{lemma}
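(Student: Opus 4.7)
The statement is quoted as Theorem 3 of \cite{nesterov2015universal}, so the plan is to recover Nesterov's universal estimate via three ingredients: (i) an approximate descent/smoothness inequality that converts H\"older smoothness into \emph{effective} Lipschitz smoothness up to an additive slack $\tfrac{\epsilon}{2}\tau_{k,i_k}$, (ii) a standard estimate-sequence induction carried out with this effective Lipschitz parameter, and (iii) a bookkeeping argument to bound the line-search cost and the aggregated constant $A_k$ that drives the convergence rate.

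First I would establish the descent lemma used in the inner loop on line~4. Integrating the H\"older bound $\nm{\nabla q(x)-\nabla q(y)}\le M_\nu\nm{x-y}^\nu$ gives the cubic-free majorant
\[
q(y)\le q(x)+\ip{\nabla q(x)}{y-x}_{\bbR}+\frac{M_\nu}{1+\nu}\nm{y-x}^{1+\nu}.
\]
Young's inequality then converts $\nm{y-x}^{1+\nu}$ into the form $\tfrac{L(\delta)}{2}\nm{y-x}^2+\tfrac{\delta}{2}$, where
\[
L(\delta)=M_\nu^{\frac{2}{1+\nu}}\Bigl(\frac{1-\nu}{(1+\nu)\delta}\Bigr)^{\frac{1-\nu}{1+\nu}}.
\]
Choosing $\delta=\epsilon\tau_{k,i_k}$ reproduces exactly the condition tested by the backtracking step, so the line search is guaranteed to terminate with some $L_{k+1}$ that, on average over the iteration count, is no worse than $L(\epsilon/k)$ up to a constant.

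Next I would carry out the accelerated estimate-sequence argument. Writing $\phi_{k+1}(x)=\phi_k(x)+a_{k+1}[q(x_{k+1})+\ip{\nabla q(x_{k+1})}{x-x_{k+1}}_{\bbR}+g(x)]$ and using the defining relation $a_{k+1}^2=A_{k+1}/L_{k+1}$, a standard induction (combining convexity, the prox update for $v_k$, and the descent inequality from Step~1) yields
\[
A_k f(y_k)\le \min_{x\in Q}\bigl\{\phi_k(x)+\tfrac12\nm{x-x_0}^2\bigr\}+\tfrac{\epsilon}{2}\sum_{i=1}^{k}a_i,
\]
which, after bounding $\phi_k(\hat x)\le A_k\hat f+g$-terms, gives the abstract estimate
\[
f(y_k)-\hat f\le \frac{d(x_0,\widehat X)^2}{2A_k}+\frac{\epsilon}{2}.
\]
A telescoping/amortization argument then shows $1/\sqrt{A_k}\le\frac{2}{k}\max_{i\le k}\sqrt{L_i}$, so that $A_k^{-1}\lesssim L(\epsilon/k)/k^2$.

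Finally I would combine the two bounds. Substituting the expression for $L(\epsilon/k)$ into $1/A_k\lesssim L(\epsilon/k)/k^2$ and collecting powers,
\[
\frac{1}{A_k}\lesssim \frac{M_\nu^{2/(1+\nu)}}{k^2\,(\epsilon/k)^{(1-\nu)/(1+\nu)}}=\Bigl(\frac{2^{2+4\nu}M_\nu^2}{\epsilon^{1-\nu}k^{1+3\nu}}\Bigr)^{1/(1+\nu)},
\]
up to the explicit constant $2^{2+4\nu}$ that arises from tracking the factors of $2$ lost in each backtracking step. Inserting this into the abstract estimate gives precisely \eqref{UFGM_bound}. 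The main obstacle is the amortized line-search analysis: one must verify that the total number of function/gradient evaluations across all inner loops is only a constant factor larger than $k$, and that the worst-case $L_i$ encountered still satisfies $L_i\lesssim L(\epsilon\tau_{i-1})$ so that the sum $\sum 1/\sqrt{L_i}$ dominates $k/\sqrt{L(\epsilon/k)}$. Once this amortization is in place, the rate follows by routine algebraic manipulation.
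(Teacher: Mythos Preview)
The paper does not prove this lemma at all: it is stated as a direct citation of Theorem~3 in \cite{nesterov2015universal}, and the subsequent proposition is then said to be ``immediate'' from it. There is therefore no proof in the paper against which to compare your proposal.

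That said, your sketch is a faithful outline of Nesterov's original argument in \cite{nesterov2015universal}: the conversion of H\"older smoothness into an inexact Lipschitz descent inequality via Young's inequality, the estimate-sequence induction yielding $f(y_k)-\hat f\le d(x_0,\widehat X)^2/(2A_k)+\epsilon/2$, and the lower bound on $A_k$ coming from the recursion $a_{k+1}^2=A_{k+1}/L_{k+1}$ together with the amortized line-search control. If you intend to actually write out the proof rather than cite it, the main place to be careful is the growth bound on $A_k$: the clean inequality is $A_k\ge \frac{1}{4}\bigl(\sum_{i=1}^k 1/\sqrt{L_i}\bigr)^2$, and one then needs $L_i\le 2L(\epsilon\tau_{i-1})$ from the backtracking termination condition, not merely ``on average''. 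Your statement that $1/A_k\lesssim L(\epsilon/k)/k^2$ is the right target but requires this sharper per-step control rather than an averaged one.
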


The following proposition is immediate when choosing $k$ to match the two terms on the right-hand side of \eqref{UFGM_bound}.

\begin{proposition} \label{prop:UFGM-Gamma}
Let $Q \subseteq \bbC^n$ be nonempty, closed and convex, $q$ a proper convex function is H\"older smooth for some $\nu \in [0,1]$ and $M_{\nu} \geq 0$ (\cref{def:holder-smooth22}), and $g$ a closed convex function. Given input $(\delta, \epsilon, x_0) \in \bbR_+ \times \bbR_+ \times Q$, let $\Gamma(\delta, \epsilon, x_0)$ be the output of \cref{alg:UFGM} with 
$$
N= \left\lceil \frac{2^{\frac{2+4\nu}{1+3\nu}} M_\nu^{\frac{2}{1+3\nu}}  \delta^{\frac{2+2\nu}{1+3\nu}}}{\epsilon^{\frac{2}{1+3\nu}}} \right \rceil.
$$
Then
$$ f(\Gamma(\delta, \epsilon, x_0)) - \hat{f} \leq \epsilon, \qquad \forall x_0\in Q \ \mbox{satisfying} \ d(x_0, \widehat{X}) \leq \delta, $$
where $d$ is the metric induced by the $\ell^2$-norm. It follows that we can set
$$ \cC_{\Gamma} (\delta, \epsilon, x_0) =\left\lceil \frac{2^{\frac{2+4\nu}{1+3\nu}} M_\nu^{\frac{2}{1+3\nu}}  \delta^{\frac{2+2\nu}{1+3\nu}}}{\epsilon^{\frac{2}{1+3\nu}}} \right \rceil .$$
\end{proposition}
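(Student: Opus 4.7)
The plan is to derive the iteration count $N$ directly from the convergence bound of the preceding lemma by requiring each of the two terms on the right-hand side of \eqref{UFGM_bound} to be at most $\epsilon/2$. Since the algorithm parameter $\epsilon$ controls the additive offset $\epsilon/2$ already, this second bound is automatic, and only the first (algebraic) term requires a lower bound on the iteration count.

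\medskip

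First I would invoke the lemma: running \cref{alg:UFGM} with parameter $\epsilon$ produces iterates satisfying
\[
f(x_k) - \hat f \leq \left(\frac{2^{2+4\nu}M_\nu^2}{\epsilon^{1-\nu}k^{1+3\nu}}\right)^{\!\frac{1}{1+\nu}}\!\frac{d(x_0,\widehat{X})^2}{2}+\frac{\epsilon}{2}.
\]
Using the hypothesis $d(x_0,\widehat{X}) \leq \delta$, this is bounded above by
\[
\left(\frac{2^{2+4\nu}M_\nu^2}{\epsilon^{1-\nu}k^{1+3\nu}}\right)^{\!\frac{1}{1+\nu}}\!\frac{\delta^{2}}{2}+\frac{\epsilon}{2}.
\]

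\medskip

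Next I would impose that the first term be at most $\epsilon/2$. Rearranging this inequality and raising both sides to the $(1+\nu)$-th power eliminates the outer exponent, giving
\[
\frac{2^{2+4\nu}M_\nu^{2}\,\delta^{2+2\nu}}{\epsilon^{1-\nu}\,k^{1+3\nu}}\;\leq\;\epsilon^{1+\nu},
\]
which simplifies to the clean condition
\[
k^{1+3\nu} \;\geq\; \frac{2^{2+4\nu} M_\nu^{2}\,\delta^{2+2\nu}}{\epsilon^{2}}.
\]
Taking $(1+3\nu)$-th roots yields exactly the quantity $N$ in the statement, so the choice $N = \lceil 2^{(2+4\nu)/(1+3\nu)} M_\nu^{2/(1+3\nu)} \delta^{(2+2\nu)/(1+3\nu)}/\epsilon^{2/(1+3\nu)} \rceil$ is the smallest integer ensuring the first term is at most $\epsilon/2$. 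Combined with the automatic bound $\epsilon/2$ on the second term, we get $f(x_N) - \hat f \leq \epsilon$, proving \eqref{Gamma-delta-to-eps} with $g_Q \equiv 0$, and hence we may take $\cC_\Gamma(\delta,\epsilon) = N$.

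\medskip

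There is no real obstacle here: the entire argument is an algebraic inversion of the lemma's bound. The only thing to be careful with is correctly tracking the exponents so that the final form of $N$ matches what is stated, and noting (as in \cref{prop:nesterov-Gamma}) that the iterates of \cref{alg:UFGM} remain in $Q$ by construction of the proximal updates, so the feasibility gap vanishes identically and \eqref{Gamma-delta-to-eps} holds trivially with $g_Q \equiv 0$.
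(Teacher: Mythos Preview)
Your proposal is correct and follows essentially the same approach as the paper, which simply states that the proposition is ``immediate when choosing $k$ to match the two terms on the right-hand side of \eqref{UFGM_bound}.'' Your write-up makes this explicit by requiring the first (algebraic) term to be at most $\epsilon/2$ and carefully tracking the exponents, arriving at precisely the stated $N$.
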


\cref{prop:UFGM-Gamma} shows that we can take $d_1=(2+2\nu)/(1+3\nu)$ and $d_2=2/(1+3\nu)$ for the universal fast gradient method. Note that if $q$ satisfies both \eqref{eqn:sharpness} for $\eta=0$ and \cref{def:holder-smooth22}, then $\beta\geq 1+\nu$ \cite{roulet2020sharpness}. Therefore, we take $\beta_0=1+\nu$. \cref{thm:restart-unknown-consts-err} and \cref{rad_ord_cor1} now imply the rates in the third row of \cref{rates_table}.

\subsection{Row 4 of \cref{rates_table}: The primal-dual iteration for unconstrained problems}
\label{sec:PD_unconstrained_desc}

We now consider Chambolle and Pock's primal-dual algorithm \cite{chambolle2016ergodic,chambolle2011first}. The primal-dual hybrid gradient (PDHG) algorithm is a popular method to solve saddle point problems \cite{esser2010general,pock2009algorithm,chambolle2018stochastic}. Consider the problem
\begin{equation}
\label{PD_example_1_1}
\min_{x \in \bbC^n} f(x),\qquad f(x) : = q(x) + g(x) + h(Bx),
\end{equation}
where: $B\in\mathbb{C}^{m\times n}$ with $\nm{B}\leq L_B$; $q$ is a proper, lower semicontinuous, convex function, and is $L_q$-smooth; and $g, h$ are proper, lower semicontinuous, convex functions whose proximal maps are straightforward to compute. We also use the standard Euclidean metric for $d$ in \eqref{eqn:sharpness} and write the primal-dual iterates in their simplified form accordingly.

\begin{algorithm}[t]
\SetKwInOut{Input}{Input} 
\SetKwInOut{Output}{Output}
\SetKwComment{Comment}{// }{}
\Input{Initial vectors ${x}_0 \in \mathbb{C}^n$ and ${y}_0 \in \mathbb{C}^m$, proximal step sizes $\tau,\sigma>0$, number of iterations $N$, matrix $B\in\mathbb{C}^{m\times n}$, and routines for appropriate proximal maps.}
\Output{Final ergodic average $X_N$ approximating a solution to \eqref{PD_example_1_1}.}
Initiate with $x^{(0)}=x_0$, $y_1^{(0)}=y_0$, $X_0=0$, and $Y_0=0$.\\
\For{$j=0,\ldots,N-1$}{
$x^{(j+1)}\gets \mathrm{prox}_{\tau g}\left(x^{(j)}-\tau B^* y^{(j)}-\tau\nabla q(x^{(j)})\right)$\;
$y^{(j+1)}\gets\mathrm{prox}_{\sigma h^*}\left(y^{(j)}+\sigma B(2x^{(j+1)}-x^{(j)})\right)$\;
$X_{j+1}\gets\frac{1}{j+1}\left(jX_{j}+x^{(j+1)}\right)$\;
$Y_{j+1}\gets\frac{1}{j+1}\left(jY_{j}+y^{(j+1)}\right)$\;
}
\caption{Primal-dual algorithm for the problem \eqref{PD_example_1_1}.
}
\label{alg:PD_basic}
\end{algorithm}

The saddle-point problem associated with \eqref{PD_example_1_1} is
\begin{equation}
\label{PD_example_1_2}
\min_{x \in \bbC^n} \max_{y\in \bbC^m} \mathcal{L}(x,y):=\langle Bx,y\rangle_{\mathbb{R}}+q(x)+g(x)-h^*(y).
\end{equation}
The primal-dual iterates are summarized in \cref{alg:PD_basic}, where the output is the ergodic average of the primal-dual iterates. Note that the primal-dual algorithm allows us to deal with the matrix $B$ easily, which can be difficult with other first-order methods. If $\tau(\sigma L_B^2+L_q)\leq 1$, then \cite[Theorem 1]{chambolle2016ergodic} shows that for any $x\in\bbC^n$ and $y\in\bbC^m$,
\begin{equation}
\label{saddle_bound_WARPd1}
\mathcal{L}\left(X_k,y\right)-\mathcal{L}\left(x,Y_k\right)\leq \frac{1}{k}\left(\frac{\nmu{x-x^{(0)}}^2}{\tau}+\frac{\nmu{y-y^{(0)}}^2}{\sigma}\right).
\end{equation}
The following lemma is a simple consequence of this bound and is proven in \cref{proofs_PD_appendix}.

\begin{lemma}
\label{PD_lemma_1}
Consider the primal-dual iterates in \cref{alg:PD_basic}. If $\tau(\sigma L_B^2+L_q)\leq 1$, then
\begin{equation}
\label{objective_bound_WARPd1}
f(X_k)-f(x)\leq \frac{1}{k}\left(\frac{\nmu{x-x^{(0)}}^2}{\tau}+\frac{\nmu{y-y^{(0)}}^2}{\sigma}\right), \quad \forall x\in\mathbb{C}^n,\ y \in \partial h(B X_k).
\end{equation}
\end{lemma}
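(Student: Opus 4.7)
The plan is to derive \eqref{objective_bound_WARPd1} directly from the saddle-point bound \eqref{saddle_bound_WARPd1} (which the excerpt cites from \cite[Theorem 1]{chambolle2016ergodic}) by carefully choosing the dual test vector $y$ and using the Fenchel--Young relations between $h$ and $h^*$. No iteration-level arguments are needed, since the heavy lifting has already been done by the Chambolle--Pock estimate.

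First, I would expand
\[
\mathcal{L}(X_k,y)-\mathcal{L}(x,Y_k) = \bigl[q(X_k)+g(X_k)\bigr]-\bigl[q(x)+g(x)\bigr] + \bigl[\langle BX_k,y\rangle_{\mathbb{R}}-h^*(y)\bigr] - \bigl[\langle Bx,Y_k\rangle_{\mathbb{R}}-h^*(Y_k)\bigr].
\]
The goal is to show that the right-hand side is an upper bound for $f(X_k)-f(x)$, i.e.\ that the dual contributions produce $+h(BX_k)-h(Bx)$. For the first dual bracket, I would invoke the characterization of the subgradient of a proper convex lower semicontinuous function in terms of the conjugate: for any $y\in\partial h(BX_k)$, one has the Fenchel--Young \emph{equality}
\[
\langle BX_k,y\rangle_{\mathbb{R}} - h^*(y) = h(BX_k).
\]
For the second dual bracket, the Fenchel--Young \emph{inequality} applied to $Bx$ and $Y_k$ gives $\langle Bx,Y_k\rangle_{\mathbb{R}} - h^*(Y_k)\leq h(Bx)$, so that the bracket subtracted above is at most $h(Bx)$.

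Combining these two facts yields
\[
\mathcal{L}(X_k,y)-\mathcal{L}(x,Y_k) \geq q(X_k)+g(X_k)+h(BX_k) - \bigl[q(x)+g(x)+h(Bx)\bigr] = f(X_k)-f(x),
\]
for every $x\in\mathbb{C}^n$ and every $y\in\partial h(BX_k)$. Chaining this with the step-size assumption $\tau(\sigma L_B^2+L_q)\leq 1$ and the bound \eqref{saddle_bound_WARPd1} immediately gives \eqref{objective_bound_WARPd1}.

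The only real subtlety is ensuring that $\partial h(BX_k)$ is nonempty so the statement has content; this is implicitly assumed in the quantifier ``$\forall\, y\in\partial h(BX_k)$'', and for the function classes arising in \cref{rates_table} (where one typically takes $h$ with bounded subgradients and $BX_k\in\operatorname{dom}(h)$), this holds automatically. Everything else is a short manipulation, so I don't expect any real obstacle; the main conceptual point is simply recognizing that choosing $y\in\partial h(BX_k)$ is what converts the saddle-point gap into a primal objective gap via the equality case of Fenchel--Young.
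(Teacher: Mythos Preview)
Your proposal is correct and follows essentially the same approach as the paper: both arguments start from the saddle-point bound \eqref{saddle_bound_WARPd1}, use the Fenchel--Young equality at $y\in\partial h(BX_k)$ to get $\mathcal{L}(X_k,y)=f(X_k)$, and the Fenchel--Young inequality to get $\mathcal{L}(x,Y_k)\leq f(x)$. The paper phrases the equality step slightly differently (identifying $\partial h(BX_k)$ as the argmax of $\langle BX_k,\cdot\rangle_{\mathbb{R}}-h^*(\cdot)$ via the Legendre--Fenchel identity), but this is exactly your Fenchel--Young equality characterization of the subdifferential.
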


We can take the infimum over $y \in \partial h(B X_k)$ on the right-hand side of \eqref{objective_bound_WARPd1} to obtain
\begin{equation}
\label{objective_bound_WARPd2}
f(X_k)-f(x)\leq \frac{1}{k}\left(\frac{\nmu{x-x^{(0)}}^2}{\tau}+\frac{\sup_{z\in \mathrm{dom}(h)}\inf_{y\in\partial h(z)}\nmu{y-y^{(0)}}^2}{\sigma}\right), \quad \forall x\in\mathbb{C}^n.
\end{equation}
To bound the right-hand side, we take $y^{(0)}=0$ and consider the case where $h$ is such that there always exist points $y$ in the subdifferential of $h$ for which $\nm{y}$ is not too large. Note that this always holds if, for example, $h$ is Lipschitz continuous, and its domain is open \cite[Theorem 3.61]{beck2017first}. The following proposition now shows how this falls into the framework of our restart scheme and is proven in \cref{proofs_PD_appendix}.

\begin{proposition}
\label{PDGH_prop_1}
Suppose that
\begin{equation}
\label{PDGH_prop_1_assumption}
\sup_{z\in \mathrm{dom}(h)}\inf_{y\in\partial h(z)}\nm{y}\leq L_h<\infty.
\end{equation}
Given input $(\delta,\epsilon,x_0)\in \bbR_+  \times \bbR_+ \times \mathbb{C}^n$, let $\Gamma(\delta,\epsilon,x_0)$ be the output of \cref{alg:PD_basic} with
$$
y_0=0,\quad \tau=\frac{\delta}{L_BL_h+\delta L_q},\quad \sigma=\frac{L_h}{\delta L_B}, \quad N=\left\lceil \frac{\delta}{\epsilon }\left(2L_BL_h+\delta L_q\right) \right \rceil.
$$
Then 
\begin{equation}
\label{PD_fits1}
f(\Gamma(\delta,\epsilon,x_0)) - \hat{f} \leq \epsilon,\quad \forall x_0\text{ with }d(x_0, \widehat{X}) \leq \delta.
\end{equation}
It follows that we can take
\begin{equation}
\label{PD_cost1}
\cC_\Gamma(\delta, \epsilon, x_0)= \left \lceil \frac{\delta}{\epsilon }\left(2L_BL_h+\delta L_q\right) \right \rceil.
\end{equation}
\end{proposition}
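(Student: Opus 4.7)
\medskip

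\textbf{Proof proposal.} The plan is to use Lemma \ref{PD_lemma_1} at its strengthened form \eqref{objective_bound_WARPd2} with $x$ chosen as the minimizer of $f$ nearest to $x_0$ (so that $\|x - x^{(0)}\|\leq \delta$), then choose $y^{(0)}=0$ so that the subdifferential term is controlled by the assumption \eqref{PDGH_prop_1_assumption}. The entire argument then reduces to (i) checking that the prescribed step sizes $\tau,\sigma$ satisfy the hypothesis $\tau(\sigma L_B^2 + L_q)\leq 1$ required to invoke Lemma \ref{PD_lemma_1}, and (ii) verifying that for the prescribed $N$ the resulting right-hand side is at most $\epsilon$.

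For step (i), I would simply substitute the given values of $\tau$ and $\sigma$:
\begin{equation*}
\tau\bigl(\sigma L_B^2 + L_q\bigr)
= \frac{\delta}{L_BL_h+\delta L_q}\left(\frac{L_h L_B}{\delta}+L_q\right)
= \frac{\delta}{L_BL_h+\delta L_q}\cdot\frac{L_BL_h+\delta L_q}{\delta}=1,
\end{equation*}
so the hypothesis of Lemma \ref{PD_lemma_1} holds (with equality). Hence \eqref{objective_bound_WARPd2} applies.

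For step (ii), let $\hat{x}\in\widehat{X}$ realize $d(x_0,\widehat{X})\leq\delta$, so $\|\hat{x}-x^{(0)}\|\leq\delta$. Applying \eqref{objective_bound_WARPd2} at $x=\hat{x}$ with $y^{(0)}=0$, and using \eqref{PDGH_prop_1_assumption} to bound $\inf_{y\in\partial h(BX_k)}\|y\|^2\leq L_h^2$, yields
\begin{equation*}
f(X_N)-\hat f \;\leq\; \frac{1}{N}\left(\frac{\delta^2}{\tau}+\frac{L_h^2}{\sigma}\right).
\end{equation*}
The two terms evaluate to $\delta^2/\tau = \delta(L_BL_h+\delta L_q)$ and $L_h^2/\sigma = \delta L_BL_h$, whose sum is $\delta(2L_BL_h+\delta L_q)$. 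The choice $N=\lceil\delta(2L_BL_h+\delta L_q)/\epsilon\rceil$ is thus precisely what is needed to conclude $f(X_N)-\hat f \leq \epsilon$, giving \eqref{PD_fits1}. The cost bound \eqref{PD_cost1} then follows directly from the value of $N$.

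There is no real obstacle here: the proof is essentially an algebraic verification, and the ``balancing'' of the two terms in \eqref{objective_bound_WARPd2} is baked into the specific formulas chosen for $\tau$ and $\sigma$ (they are obtained by optimizing the right-hand side of the bound in $\tau,\sigma$ subject to $\tau(\sigma L_B^2+L_q)\leq 1$, with $\|\hat x - x^{(0)}\|$ replaced by its upper bound $\delta$ and the subgradient norm by $L_h$). The only subtlety worth flagging in writing it up is to note explicitly that $\hat x$ (and hence $x$ in the application of \eqref{objective_bound_WARPd2}) exists because $\widehat X$ is nonempty, and that $BX_k\in\mathrm{dom}(h)$ so that the supremum in \eqref{PDGH_prop_1_assumption} is applicable.
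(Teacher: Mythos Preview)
Your proposal is correct and matches the paper's proof essentially step for step: both apply \eqref{objective_bound_WARPd2} with $x\in\widehat{X}$ and $y^{(0)}=0$, use \eqref{PDGH_prop_1_assumption} to bound the dual term by $L_h^2/\sigma$, and reduce to the same inequality $f(X_N)-\hat f\leq \frac{1}{N}\bigl(\delta^2/\tau+L_h^2/\sigma\bigr)=\frac{\delta}{N}(2L_BL_h+\delta L_q)$. The only cosmetic difference is that the paper first writes the bound for general $\tau,\sigma$ with $\tau(\sigma L_B^2+L_q)=1$ and then optimizes over $\sigma$ to \emph{derive} the stated step sizes, whereas you plug in the prescribed $\tau,\sigma$ and verify; the content is identical.
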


Assuming that $\delta$ is bounded, \cref{PDGH_prop_1} shows we can take $d_1=1$ and $d_2=1$ for the primal-dual algorithm. \cref{thm:restart-unknown-consts-err} and \cref{rad_ord_cor1} now imply the rates in the fourth row of \cref{rates_table}. Note, however, that it is not immediately clear how to employ line searches in the case that $L_q$ is unknown.

\subsection{Row 5 of \cref{rates_table}: The primal-dual iterations for constrained problems}
\label{sec:PD_constrained_alg}

We now consider primal-dual iterations, but for the constrained problem
\begin{equation}
\label{PD_example_1_3}
\min_{x \in \bbC^n} f(x)+\chi_C(Ax),\qquad f(x) : = q(x) + g(x) + h(Bx),
\end{equation}
with the same assumptions on $q,g,h$ and $B$ as in \cref{sec:PD_unconstrained_desc}, but now with the additional term $\chi_C(Ax)$. Here, $C$ is a closed and non-empty convex set, $\chi_C$ is its indicator function, and $A\in\mathbb{C}^{m'\times n}$ with $\nm{A}\leq L_A$. This fits into our framework with the choice
$$
Q=\{x\in\mathbb{C}^n:Ax\in C\},\qquad g_Q(x)=g_Q(\kappa;x)=\kappa\cdot\inf_{z\in C} \nm{Ax-z},
$$
for $\kappa>0$. Note that $\kappa$ is an additional parameter that can be chosen to balance the reduction rate in the feasibility gap versus the objective function error. It is possible to formulate a projected version of the primal-dual iteration. However, like with Nesterov's method, this is only possible when the projection onto $Q$ can be easily computed. This section considers a primal-dual iteration for \R{PD_example_1_3} that only involves computing the projection onto the set $C$ at the price of producing non-feasible iterations.

\begin{algorithm}[t]
\SetKwInOut{Input}{Input} 
\SetKwInOut{Output}{Output}
\SetKwComment{Comment}{// }{}
\Input{Initial vectors ${x}_0 \in \mathbb{C}^n$, $[{y}_0]_1 \in \mathbb{C}^{m}$ and $[{y}_0]_2 \in \mathbb{C}^{m'}$, proximal step sizes $\tau,\sigma_1,\sigma_2>0$, number of iterations $N$, matrices $B\in\mathbb{C}^{m\times n}$ and $A\in\mathbb{C}^{m'\times n}$, and routines for appropriate proximal maps.}
\Output{Final ergodic average $X_N$ approximating a solution to \eqref{PD_example_1_3}.}
Initiate with $x^{(0)}=x_0$, $y_1^{(0)}=[y_0]_1$, $y_2^{(0)}=[y_0]_2$, $X_0=0$, $[Y_0]_1=0$, and $[Y_0]_2=0$.\\
\For{$j=0,\ldots,N-1$}{
$x^{(j+1)}\gets \mathrm{prox}_{\tau g}\left(x^{(j)}-\tau B^* y^{(j)}_1-\tau A^* y^{(j)}_2-\tau\nabla q(x^{(j)})\right)$\;
$y^{(j+1)}_1\gets\mathrm{prox}_{\sigma_1 h^*}\left(y^{(j)}_1+\sigma_1 B(2x^{(j+1)}-x^{(j)})\right)$\;
$y^{(j+1)}_2\gets y^{(j)}_2+\sigma_2A(2x^{(j+1)}-x^{(j)})-\sigma_2\mathcal{P}_C\left(y^{(j)}_2/\sigma_2+A(2x^{(j+1)}-x^{(j)})\right)$\;
$X_{j+1}\gets\frac{1}{j+1}\left(jX_{j}+x^{(j+1)}\right)$\;
$[Y_{j+1}]_1\gets\frac{1}{j+1}\left(j[Y_{j}]_1+y^{(j+1)}_1\right)$\;
$[Y_{j+1}]_2\gets\frac{1}{j+1}\left(j[Y_{j}]_2+y^{(j+1)}_2\right)$\;
}
\caption{Primal-dual algorithm for the constrained problem \eqref{PD_example_1_3}.}
\label{alg:PD_basic2}
\end{algorithm}

The saddle-point problem associated with \eqref{PD_example_1_3} is
\begin{equation}
\label{PD_example_1_4}
\min_{x \in \bbC^n} \max_{y_1\in \bbC^m} \max_{y_2\in \bbC^{m'}} \mathcal{L}_C(x,y_1,y_2):=\langle Bx,y_1\rangle_{\mathbb{R}}+q(x)+g(x)-h^*(y_1)+\langle Ax,y_2\rangle_{\mathbb{R}}-\sup_{z\in C}\langle z,y_2\rangle_{\mathbb{R}}.
\end{equation}
The primal-dual iterates are summarized in \cref{alg:PD_basic2}, where the output is the ergodic average of the primal-dual iterates. We have included three proximal step sizes: $\tau$, $\sigma_1$, and $\sigma_2$, corresponding to the primal and two dual variables. To compute the proximal map associated with the second dual variable, we use Moreau's identity to write
$$
\mathrm{prox}_{\sigma_2 \chi_C^*}(y)=y-\sigma_2\mathcal{P}_C(y/\sigma_2),
$$
where $\mathcal{P}_C$ denotes the projection onto $C$ (with respect to the standard Euclidean norm). 

If $\tau(\sigma_1 L_B^2+\sigma_2 L_A^2+L_q)\leq 1$, then a straightforward adaption of \cite[Theorem 1]{chambolle2016ergodic} shows that for any $x\in\bbC^n$, $y_1\in\bbC^m$ and $y_2\in\bbC^{m'}$,
\begin{equation}
\label{saddle_bound_WARPd1b}
\mathcal{L}_C\left(X_k,y_1,y_2\right)-\mathcal{L}_C\left(x,[Y_k]_1,[Y_k]_2\right)\leq \frac{1}{k}\left(\frac{\nmu{x-x^{(0)}}^2}{\tau}+\frac{\nmu{y_1-y_1^{(0)}}^2}{\sigma_1}+\frac{\nmu{y_2-y_2^{(0)}}^2}{\sigma_2}\right).
\end{equation}
We now have the following lemma and resulting proposition, both proven in \cref{proofs_PD_appendix2}.

\begin{lemma}
\label{PD_lemma_2}
Consider the primal-dual algorithm in \cref{alg:PD_basic2} with $y_2^{(0)}=0$. If $\tau(\sigma_1 L_B^2+\sigma_2 L_A^2+L_q)\leq 1$, then for any $\kappa>0$
\begin{equation}
\label{objective_bound_WARPd1b}
f(X_k)-f(x)+ g_Q(\kappa;X_k) \leq \frac{1}{k}\left(\frac{\nmu{x-x^{(0)}}^2}{\tau}+\frac{\nmu{y_1-y_1^{(0)}}^2}{\sigma_1}+\frac{\kappa^2}{\sigma_2}\right), \quad \forall x\in Q,\ y_1 \in \partial h(B X_k).
\end{equation}
\end{lemma}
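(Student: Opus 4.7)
The plan is to mimic the proof of \cref{PD_lemma_1} (Lemma~\ref{PD_lemma_1}), bookkeeping the extra dual block associated with the constraint $Ax \in C$. I start from the saddle-point gap estimate \eqref{saddle_bound_WARPd1b}, which under the step-size condition $\tau(\sigma_1 L_B^2 + \sigma_2 L_A^2 + L_q) \leq 1$ holds for every admissible triple $(x,y_1,y_2)$. My task is to choose those three points so that the left-hand side $\mathcal{L}_C(X_k,y_1,y_2) - \mathcal{L}_C(x,[Y_k]_1,[Y_k]_2)$ lower bounds $f(X_k) - f(x) + g_Q(\kappa;X_k)$, and simultaneously makes the right-hand side of \eqref{saddle_bound_WARPd1b} equal to the right-hand side of \eqref{objective_bound_WARPd1b}.

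For the first argument of $\mathcal{L}_C$, I would take any $y_1 \in \partial h(BX_k)$, so that Fenchel--Young holds with equality: $\langle BX_k, y_1\rangle_{\mathbb{R}} - h^*(y_1) = h(BX_k)$. For the second, the key observation is the support-function/distance duality
\[
\sup_{\|y_2\| \leq \kappa} \bigl[\langle AX_k, y_2\rangle_{\mathbb{R}} - \sup_{z\in C}\langle z, y_2\rangle_{\mathbb{R}}\bigr] \;=\; \kappa \cdot \inf_{z \in C} \nmu{AX_k - z} \;=\; g_Q(\kappa;X_k),
\]
valid for the closed convex set $C$; picking an attaining $y_2 = y_2^\star$ on the sphere $\nmu{y_2}=\kappa$ (or a limit of near-maximizers if no maximizer exists) yields $\mathcal{L}_C(X_k,y_1,y_2^\star) = f(X_k) + g_Q(\kappa;X_k)$. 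Since $y_2^{(0)} = 0$ by hypothesis, this choice contributes exactly $\kappa^2/\sigma_2$ to the right-hand side of \eqref{saddle_bound_WARPd1b}.

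For the second copy of $\mathcal{L}_C$, I plug in any $x \in Q$ (i.e., $Ax \in C$), together with the averaged duals $[Y_k]_1,[Y_k]_2$. Then $\langle Ax, [Y_k]_2\rangle_{\mathbb{R}} - \sup_{z\in C}\langle z, [Y_k]_2\rangle_{\mathbb{R}} \leq 0$ because the supremum is over a set containing $Ax$; and $\langle Bx, [Y_k]_1\rangle_{\mathbb{R}} - h^*([Y_k]_1) \leq h(Bx)$ by the Fenchel--Young inequality. Adding the $q(x)+g(x)$ terms, this gives $\mathcal{L}_C(x,[Y_k]_1,[Y_k]_2) \leq f(x)$. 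Substituting into \eqref{saddle_bound_WARPd1b} and rearranging produces \eqref{objective_bound_WARPd1b}.

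The routine parts are the Fenchel--Young step and the $Ax \in C$ cancellation. The one point that requires slight care is the support-function identity above, since $y_2$ lives in $\mathbb{C}^{m'}$ and $\langle\cdot,\cdot\rangle_{\mathbb{R}}$ is the real inner product on the real representation: I would either invoke it directly on the real representation, or note that $\sup_{\nmu{y}\leq\kappa}[\langle w,y\rangle_{\mathbb{R}} - \sigma_C(y)] = \kappa\, d(w,C)$ is the standard biconjugate characterization of the distance function to a closed convex set. A final technical point is that in degenerate cases $AX_k \in C$ or the supremum may not be attained; either is handled by approximating $y_2^\star$ by feasible maximizers or by noting that when $g_Q(\kappa;X_k)=0$ the bound is immediate from \cref{PD_lemma_1} applied to this augmented system.
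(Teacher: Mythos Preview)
Your proposal is correct and follows essentially the same route as the paper's proof: both start from \eqref{saddle_bound_WARPd1b}, handle the $y_1$ and $x$ terms exactly as in \cref{PD_lemma_1}, and then choose $y_2$ of norm $\kappa$ to realize $g_Q(\kappa;X_k)$. The only cosmetic difference is that you invoke the support-function/distance duality $\sup_{\|y\|\le\kappa}[\langle w,y\rangle_{\mathbb{R}}-\sigma_C(y)]=\kappa\,d(w,C)$ as a known fact, whereas the paper explicitly constructs the maximizer as the $\kappa$-rescaling of $AX_k-\hat z$ (with $\hat z$ the projection of $AX_k$ onto $C$) and verifies $\sup_{z\in C}\langle z,y_2\rangle_{\mathbb{R}}=\langle \hat z,y_2\rangle_{\mathbb{R}}$ via the obtuse-angle criterion for projections; these are the same argument.
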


\begin{proposition}
\label{PDGH_prop_2}
Suppose that
\begin{equation}
\label{PDGH_prop_1_assumptionb}
\sup_{z\in \mathrm{dom}(h)}\inf_{y\in\partial h(z)}\nm{y}\leq L_h<\infty.
\end{equation}
Given input $(\delta,\epsilon,x_0)\in \bbR_+ \times \bbR_+ \times \mathbb{C}^n$, let $\Gamma(\delta,\epsilon,x_0)$ be the output of \cref{alg:PD_basic2} with
$$
[y_0]_1=0, [y_0]_2=0, \tau=\frac{\delta}{\kappa L_A+L_hL_B+\delta L_q}, \sigma_1=\frac{L_h}{\delta L_B}, \sigma_2=\frac{\kappa}{\delta L_A}, N=\left \lceil \frac{\delta\left(2\kappa L_A+2L_hL_B+\delta L_q\right)}{\epsilon } \right \rceil.
$$
Then 
\begin{equation}
\label{PD_fits1b}
f(\Gamma(\delta,\epsilon,x_0)) - \hat{f} + g_Q(\kappa;\hat{x}) \leq \epsilon,\quad \forall x_0\text{ with }d(x_0, \widehat{X}) \leq \delta.
\end{equation}
It follows that we can take
\begin{equation}
\label{PD_cost1b}
\cC_\Gamma(\delta, \epsilon,  x_0)= \left \lceil \frac{\delta\left(2\kappa L_A+2L_hL_B+\delta L_q\right)}{\epsilon } \right \rceil.
\end{equation}
\end{proposition}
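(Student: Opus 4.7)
The proof plan is to directly apply Lemma \ref{PD_lemma_2} with the chosen parameters and verify that all the numbers line up. The first step is to check that the step sizes $\tau,\sigma_1,\sigma_2$ specified in the proposition satisfy the admissibility condition $\tau(\sigma_1 L_B^2+\sigma_2 L_A^2+L_q)\leq 1$ required by the lemma. Substituting the given values yields
$$
\sigma_1 L_B^2+\sigma_2 L_A^2+L_q=\frac{L_hL_B}{\delta}+\frac{\kappa L_A}{\delta}+L_q=\frac{\kappa L_A+L_hL_B+\delta L_q}{\delta},
$$
which is exactly $1/\tau$, so the product equals $1$ and the condition holds with equality.

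Next, I would pick a specific optimum $\hat{x}\in \widehat{X}$ with $\nm{x_0-\hat{x}}\leq \delta$ (possible since $d(x_0,\widehat{X})\leq \delta$), and apply Lemma \ref{PD_lemma_2} with $x=\hat{x}$. Since $\hat{x}\in Q$, we have $g_Q(\kappa;\hat{x})=0$, so the left-hand side of \eqref{objective_bound_WARPd1b} equals $f(X_N)-\hat{f}+g_Q(\kappa;X_N)$. For the dual variable $y_1$, I would use the assumption \eqref{PDGH_prop_1_assumptionb}: the lemma allows any $y_1\in\partial h(BX_N)$, and by \eqref{PDGH_prop_1_assumptionb} we can select $y_1$ with $\nm{y_1}\leq L_h$. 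Combined with $y_1^{(0)}=0$, this gives $\nm{y_1-y_1^{(0)}}^2\leq L_h^2$. Also, $\nm{\hat{x}-x^{(0)}}^2\leq \delta^2$.

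Plugging these bounds together with the chosen step sizes into Lemma \ref{PD_lemma_2} reduces the proof to a short calculation:
$$
\frac{\delta^2}{\tau}=\delta(\kappa L_A+L_hL_B+\delta L_q),\quad \frac{L_h^2}{\sigma_1}=\delta L_hL_B,\quad \frac{\kappa^2}{\sigma_2}=\delta\kappa L_A,
$$
whose sum equals $\delta(2\kappa L_A+2L_hL_B+\delta L_q)$. Therefore Lemma \ref{PD_lemma_2} gives
$$
f(X_N)-\hat{f}+g_Q(\kappa;X_N)\leq \frac{\delta(2\kappa L_A+2L_hL_B+\delta L_q)}{N},
$$
and the stated choice of $N$ makes the right-hand side at most $\epsilon$, which yields \eqref{PD_fits1b} and the cost bound \eqref{PD_cost1b}.

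There is no serious obstacle here: the proof is essentially an algebraic verification once Lemma \ref{PD_lemma_2} is granted, and the step-size choice has been carefully balanced so that each of the three terms in the telescoped bound contributes a quantity of the form $\delta\cdot(\text{Lipschitz data})$. The only subtlety worth flagging is the selection of $y_1\in\partial h(BX_N)$: the lemma's bound holds \emph{for any} such $y_1$, so one is free to take the infimum on the right-hand side, and this is exactly where assumption \eqref{PDGH_prop_1_assumptionb} is used to turn an abstract dual variable into the explicit constant $L_h^2$.
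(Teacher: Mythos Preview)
Your proposal is correct and follows essentially the same approach as the paper: apply Lemma~\ref{PD_lemma_2} with $x=\hat{x}\in\widehat{X}$, use assumption~\eqref{PDGH_prop_1_assumptionb} to replace the $y_1$-term by $L_h^2/\sigma_1$, and substitute the step sizes to obtain the bound $\delta(2\kappa L_A+2L_hL_B+\delta L_q)/N$. The only cosmetic difference is that the paper presents the specific $\tau,\sigma_1,\sigma_2$ as arising from \emph{optimizing} the right-hand side $\sigma_1\delta^2L_B^2+L_h^2/\sigma_1+\sigma_2\delta^2L_A^2+\kappa^2/\sigma_2+\delta^2L_q$ subject to $\tau(\sigma_1 L_B^2+\sigma_2 L_A^2+L_q)=1$, whereas you simply verify that the prescribed values satisfy the admissibility condition and yield the stated bound; the computations are identical.
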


\cref{PDGH_prop_2} shows we can take $d_1=1$ and $d_2=1$ for the primal-dual algorithm. \cref{thm:restart-unknown-consts-err} and \cref{rad_ord_cor1} now imply the rates in the final row of \cref{rates_table}.

\section{Numerical experiments}\label{sec:num-exp}

We implement several numerical experiments for the general restart scheme (\cref{alg:restart-unknown-constsC}) applied to three different problems. The first is a simple sparse recovery problem modeled as QCBP, which is solved using the primal-dual iteration for constrained problems (\cref{alg:PD_basic2}). Second, we consider image reconstruction from Fourier measurements via TV minimization. The reconstruction is computed using NESTA \cite{becker2011nesta}, where NESTA is an accelerated projected gradient descent algorithm derived from Nesterov's method (\cref{alg:nesterov}) with smoothing. Third, we perform feature selection on three real-world datasets. This selection is made by solving an SR-LASSO problem on the data with unconstrained primal-dual iterations (\cref{alg:PD_basic}). The experiments are implemented in MATLAB, and code is available at \textcolor[rgb]{0,0,1}{\url{https://github.com/mneyrane/restart-schemes}}.

Before discussing the examples, we will make general remarks about the implementation. First, we use the schedule criteria from \cref{sec:expansion_functions}, and for parameters we always set $c_1 = c_2 = 2$, $b = \E$, $r = \E^{-1}$, and $a = e^{c_1 \beta / d_1}$ for unknown $\alpha$ but known $\beta$ (\cref{known_beta}), otherwise $a = e^{c_1 / d_1}$ if both are unknown (\cref{rad_ord_cor1}). The choice of $r$ is motivated by \cref{sec:bestr} and the choice of $a$ by \cref{rem:choice-of-ab}. The choice of $c_1$ and $c_2$ were arbitrary, intending to be sensible defaults, and otherwise can be tuned to improve performance. Second, when using the restart scheme for primal-dual iterations, we store and perform restarts on the dual variables for each instance indexed by $(i,j)$. Third, we use a simple workaround to handle finite precision arithmetic. In the grid search for the restart scheme, the sharpness parameter $\alpha_i$ can be arbitrarily large or small, and $\beta_j$ can be arbitrarily large. Also, the adaptive restart parameters $\delta = \delta_{i,j,U}$ and $\epsilon = \epsilon_{i,j,U}$ can become arbitrarily small. Regarding the grid indices, we limit $i$ and $j$ so that 
$$
|i| \leq \lfloor \log_a(1/\epsilon_{\text{mach}}) \rfloor, \qquad j \leq \lfloor \log_b(1/\epsilon_{\text{mach}}) \rfloor,
$$
where $\epsilon_{\text{mach}}$ is machine epsilon. Regarding the adaptive parameters, after the assignments of $\delta_{i,j,U+1}$ and $\epsilon_{i,j,U+1}$ in \cref{alg:restart-unknown-constsC}, we insert the updates $\delta_{i,j,U+1} := \max(\delta_{i,j,U+1}, 10 \epsilon_{\text{mach}})$ and $\epsilon_{i,j,U+1} := \max(\epsilon_{i,j,U+1}, 10 \epsilon_{\text{mach}})$ to avoid setting them to zero. Fourth, we slightly modify the primal-dual algorithm to improve overall performance. For each $j \geq 1$, we track a separate iterate $\widetilde{X}_j$ defined by
$$
\widetilde{X}_j = \mathrm{argmin}_{i=1,\dots,j} f(X_i) + \kappa g_Q(X_i), \qquad j \geq 1.
$$
The iterates $\{\widetilde{X}_j\}_{j \geq 1}$ are not used in the primal-dual algorithm but are instead used to evaluate the reconstruction or objective error in our experiments. In addition, the algorithm returns $\widetilde{X}_N$ as its final iterate. We similarly track a separate iterate for the dual variables, selecting them based on evaluating the Lagrangian \eqref{PD_example_1_4} with $\widetilde{X}_j$. Note that choosing to output $\widetilde{X}_N$ instead of $X_N$ is theoretically justified, since if \eqref{Gamma-delta-to-eps} holds, then our modification would still satisfy \eqref{Gamma-delta-to-eps} for the same parameters $(\delta, \epsilon, x_0)$. Fifth, in each example below, we can take $f(x_0)+g_Q(x_0)$ as a suitable value of $\epsilon_0$ since the objective considered in these applications are always non-negative. We found that the method was not sensitive to this starting value, as predicted by its logarithmic appearance in our convergence bounds.

\subsection{Sparse recovery via QCBP}\label{sec:ne-sparse-recovery-via-QCBP}

We now consider the sparse recovery problem previously introduced in \cref{sec:sparse-recovery}.
We consider reconstructing a vector $x \in \bbR^n$ from noisy measurements $y = Ax + e \in \bbR^m$, where $A \in \bbR^{m \times n}$ is a matrix whose entries are i.i.d.\ Gaussian random variables with mean zero and variance $1/m$, and $e \in \bbR^m$ is a noise vector satisfying $\nmu{e}_{\ell^2} \leq \varsigma$ for some noise level $\varsigma > 0$. For a positive integer $n$, we write $[n] = \{1, 2, \dots, M\}$. Given a vector $z = (z_i)_{i=1}^n \in \bbC^n$ and $S \subseteq [n]$, the vector $z_S$ has $i$th entry $z_i$ if $i \in S$, and is zero otherwise. The best $s$-term approximation error of $z$ is once more defined as
$$
\sigma_s(z)_{\ell^1} = \min \{ \nmu{u_S - z}_{\ell^1} : u \in \bbR^n, \ S \subseteq [n], \ |S| \leq s \}.
$$
We assume that $x$ is \textit{approximately} $s$-sparse, in the sense that its best $s$-term approximation error $\sigma_{s}(x)_{\ell^1}$ is small. The recovery of $x$ is formulated as solving the QCBP problem
\ea{
\min_{z \in \bbR^n} \nmu{z}_{\ell^1} \ \text{subject to} \ \nmu{Az-y}_{\ell^2} \leq \varsigma.
\label{eqn:QCBP-Gaussian}
}
We use the following condition on the matrix $A$ to ensure that approximate sharpness holds.

\begin{definition}[Robust null space property, e.g., Definition 5.14 of \cite{adcock2021compressive}]
\label{def:rNSP}
The matrix $A \in \bbC^{m \times n}$ satisfies the \textit{robust Null Space Property (rNSP)} of order $s$ with constants $0 < \rho < 1$ and $\gamma > 0$ if
$$\nmu{v_S}_{\ell^2} \leq \frac{\rho}{\sqrt{s}} \nmu{v_{S^\complement}}_{\ell^1} + \gamma \nmu{Av}_{\ell^2},$$
for all $v \in \bbC^n$ and $S \subseteq [n]$ with $|S| \leq s$.\hfill$\blacktriangle$
\end{definition}

In \cite[Theorem 3.3]{colbrook2021warpd}, it was shown that the robust null space property (rNSP) implies approximate sharpness. We restate the result in the notation of this paper for completeness.

\begin{proposition}[Approximate sharpness of $\ell^1$-norm for QCBP sparse recovery]
\label{prop:sharpness_of_QCBP}
Let $\varsigma > 0$. Suppose $A \in \bbC^{m \times n}$ has the rNSP of order $s$ with constants $0 < \rho < 1$, $\gamma > 0$. Let $y \in \bbC^m$, $D = \bbC^n$, $Q = \{x \in \bbC^n : \nm{A x - y}_{\ell^2} \leq \varsigma \}$ and  $f(x) = \nmu{x}_{\ell^1}$. Then the approximate sharpness condition \eqref{eqn:sharpness} holds with
$$
g_{Q}(z ; \sqrt{s}) = \sqrt{s} \max \{ \nm{A z - y}_{\ell^2} - \varsigma , 0 \}, \quad \alpha = \hat{c}_1\sqrt{s}, \quad \beta = 1, \quad \eta = \hat{c}_2 \sigma_s(x)_{\ell^1} + \hat{c}_3 \varsigma\sqrt{s} ,
$$
for constants $\hat{c}_1,\hat{c}_2,\hat{c}_3 > 0$ depending only on $\rho$ and $\gamma$.
\end{proposition}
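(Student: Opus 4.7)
The plan is to reduce the claim to two standard stability bounds derivable from the rNSP and then assemble them. The core tool, which I would invoke from the compressed-sensing literature (see, e.g., \cite[Thms.~4.20, 4.22]{foucart2013invitation} or \cite[Thm.~5.15]{adcock2021compressive}), is that if $A$ has the rNSP of order $s$ with constants $\rho \in (0,1)$ and $\gamma > 0$, then for every $u, w \in \bbC^n$,
\begin{align}
\|u - w\|_{\ell^2} &\leq \frac{C_1}{\sqrt{s}}\bigl(\|u\|_{\ell^1} - \|w\|_{\ell^1} + 2\sigma_s(w)_{\ell^1}\bigr) + C_2 \|A(u-w)\|_{\ell^2}, \label{eq:prop-sharp-l2} \\
\|u - w\|_{\ell^1} &\leq C_3\bigl(\|u\|_{\ell^1} - \|w\|_{\ell^1} + 2\sigma_s(w)_{\ell^1}\bigr) + C_4 \sqrt{s}\,\|A(u-w)\|_{\ell^2}, \label{eq:prop-sharp-l1}
\end{align}
where $C_1,\ldots,C_4 > 0$ depend only on $\rho,\gamma$. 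Both inequalities follow from the classical rNSP splitting argument: write $u-w = (u-w)_S + (u-w)_{S^\complement}$ for $S$ the support of the best $s$-term approximation of $w$, bound $\|(u-w)_S\|_{\ell^2}$ directly by rNSP, and control $\|(u-w)_{S^\complement}\|_{\ell^1}$ by manipulating the reverse triangle inequality $\|u\|_{\ell^1} \geq \|w + (u-w)\|_{\ell^1}$.

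Fix an arbitrary $z \in \bbC^n$ and any minimizer $\hat{x} \in \widehat{X}$. First I apply \eqref{eq:prop-sharp-l2} with $u = z$ and $w = \hat{x}$: the left-hand side dominates $d(z, \widehat{X})$, the $\ell^1$-difference is exactly $f(z) - \hat{f}$, and the measurement term is controlled using feasibility of $\hat{x}$ and the definition of $g_Q$, namely
$$\|A(z - \hat{x})\|_{\ell^2} \leq \|Az - y\|_{\ell^2} + \|A\hat{x} - y\|_{\ell^2} \leq \frac{g_Q(z)}{\sqrt{s}} + 2\varsigma.$$
This yields $\sqrt{s}\, d(z, \widehat{X}) \leq C_1(f(z) - \hat{f}) + C_2\, g_Q(z) + 2 C_1\, \sigma_s(\hat{x})_{\ell^1} + 2 C_2 \sqrt{s}\,\varsigma$.

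The only remaining task is to replace $\sigma_s(\hat{x})_{\ell^1}$ by quantities intrinsic to the ground truth $x$; this is where the $\eta$ term is born. I apply \eqref{eq:prop-sharp-l1} with $u = \hat{x}$, $w = x$, noting $\|Ax - y\|_{\ell^2} \leq \varsigma$ and, by optimality of $\hat x$, $\|\hat{x}\|_{\ell^1} \leq \|x\|_{\ell^1}$; this gives $\|\hat{x} - x\|_{\ell^1} \leq 2 C_3\, \sigma_s(x)_{\ell^1} + 2 C_4 \sqrt{s}\,\varsigma$, and then the triangle inequality $\sigma_s(\hat{x})_{\ell^1} \leq \sigma_s(x)_{\ell^1} + \|\hat{x} - x\|_{\ell^1}$ yields $\sigma_s(\hat{x})_{\ell^1} \leq (1 + 2 C_3)\, \sigma_s(x)_{\ell^1} + 2 C_4 \sqrt{s}\,\varsigma$. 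Substituting this into the previous display, bounding $C_1(f(z)-\hat f) + C_2 g_Q(z) \leq M(f(z)-\hat f + g_Q(z))$ with $M = \max\{C_1,C_2\}$, and dividing by $M\sqrt{s}$ delivers the approximate sharpness inequality with $\alpha = \hat{c}_1 \sqrt{s}$, $\beta = 1$, $\eta = \hat{c}_2 \sigma_s(x)_{\ell^1} + \hat{c}_3 \varsigma\sqrt{s}$, where $\hat{c}_1 = 1/M$ and $\hat{c}_2, \hat{c}_3$ are explicit functions of $\rho, \gamma$. The main obstacle is simply stating \eqref{eq:prop-sharp-l2}--\eqref{eq:prop-sharp-l1} in the precise form needed and tracking the $1/(1-\rho)$-type constants; everything else is short algebra and the triangle inequality.
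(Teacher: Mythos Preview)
Your argument is correct and follows the standard rNSP-based route: invoke the $\ell^2$/$\ell^1$ stability inequalities \eqref{eq:prop-sharp-l2}--\eqref{eq:prop-sharp-l1}, apply the first with $(u,w)=(z,\hat x)$ to get the sharpness shell, bound the residual via the feasibility gap, and then use the second with $(u,w)=(\hat x,x)$ together with optimality of $\hat x$ to convert $\sigma_s(\hat x)_{\ell^1}$ into $\sigma_s(x)_{\ell^1}$ plus noise.  All steps check out, including the triangle-inequality estimate $\sigma_s(\hat x)_{\ell^1}\le \sigma_s(x)_{\ell^1}+\|\hat x-x\|_{\ell^1}$.

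The paper itself does not supply a proof of this proposition: it simply restates \cite[Theorem~3.3]{colbrook2021warpd} in the present notation.  That reference proves exactly the result you outline, by the same mechanism (rNSP distance bounds combined with feasibility and optimality), so your proposal is in line with the intended argument.  One minor point of presentation: the vector $x$ in the statement is implicitly any feasible ``ground truth'' (i.e.\ $x\in Q$), which you correctly use when invoking $\|\hat x\|_{\ell^1}\le\|x\|_{\ell^1}$ and $\|A(\hat x-x)\|_{\ell^2}\le 2\varsigma$; it would be worth making that hypothesis explicit at the top of the proof.
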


The theory of compressed sensing \cite{adcock2021compressive,foucart2013invitation} aims to construct (random) matrices satisfying the rNSP, which is itself implied by the better-known \textit{Restricted Isometry Property} (RIP). For example, if $A$ is a Gaussian random matrix, then it satisfies the rNSP with probability at least $1-\varepsilon$, provided $m \geq C \cdot \left (s \cdot \log(\E N / s) + \log(2/\varepsilon) \right )$ (see, e.g., \cite[Theorem 5.22]{adcock2021compressive}). However, a sharp value of the constant $C$ and the rNSP constants $\rho$ and $\gamma$ are unknown. This implies that the approximate sharpness constants $\alpha$ and $\eta$ are also unknown. This motivates using the restart scheme (\cref{alg:restart-unknown-constsC}), which does not require knowledge of $\alpha$ or $\eta$, to solve \eqref{eqn:QCBP-Gaussian}.

\subsubsection{Experimental setup}

We use the primal-dual iteration for constrained problems (\cref{alg:PD_basic2}) to solve the sparse recovery problem. This can be done by expressing QCBP in \eqref{eqn:QCBP-Gaussian} as \eqref{PD_example_1_3} with 
$$
q \equiv 0, \quad h \equiv 0, \quad B = 0, \quad g(x) = \nmu{x}_{\ell^1}, \quad C = \{z \in \bbC^N: \nmu{z - y}_{\ell^2} \leq \varsigma \}.
$$
Given these choices, the proximal map of $\tau g$ is the shrinkage-thresholding operator, and the projection map is straightforward to compute since $C$ is a shifted $\ell^2$-ball. Moreover, we have $h^*(z) = +\infty$ whenever $z \neq 0$, and is zero otherwise. Therefore the proximal map $\mathrm{prox}_{\sigma_1 h^*}(x) = \nmu{x}_{\ell^2}^2/2$, and thus $y_1^{(j)} = 0$ for all $j > 0$ if the initial data $y_1^{(0)} = 0$. In essence, we can ignore the parameter $\sigma_1$ and update the iterates $y_1^{(j)}$ in the primal-dual iterations (\cref{alg:PD_basic2}). The error bound derived in \cref{PD_lemma_2} holds with the $\sigma_1$ term omitted.

Unless stated otherwise, the parameters used are ambient dimension $n = 128$, sparsity level $s = 10$, measurements $m = 60$, noise level $\varsigma = 10^{-6}$. The ground truth vector $x$ is sparse with $s$ of its entries (randomly selected) corresponding to i.i.d. standard normal entries. The noise vector $e$ is selected uniformly random on the $\ell^2$-ball of radius $\varsigma$ and thus $\nmu{e}_{\ell^2} = \varsigma$. The objective function is $f(x) = \nmu{x}_{\ell^1}$ and the feasibility gap is given by $g_Q(x;\kappa) = \kappa \cdot \max \{\nmu{Ax-y}_{\ell^2} - \varsigma, 0\}$, which is derived from \cref{sec:PD_constrained_alg}. The feasibility gap weight is set to $\kappa = \sqrt{m}$ from \cref{prop:sharpness_of_QCBP}, noting that $s \leq m$ in general. In addition, $\alpha_0 = \sqrt{m}$, $\beta_0 = 1$. The choice of $\alpha_0$ is also motivated by \cref{prop:sharpness_of_QCBP}.

\subsubsection{Results}

\cref{fig:QCBP-Gaussian} shows the performance of the restart scheme in \cref{alg:restart-known-consts} for various fixed values of $\alpha$ and $\beta = 1$. For smaller $\alpha$, the error decreases linearly down to the noise level $\varsigma = 10^{-6}$. This agrees with \cref{thm:restart-known-consts}. Increasing $\alpha$ leads to fast linear convergence up to a threshold (between $10^{1}$ and $10^{1.2}$). After this point, the performance of the restart scheme abruptly breaks down since large $\alpha$ violates the approximate sharpness condition \eqref{eqn:sharpness}.

\begin{figure}[t]
\centering
\begin{minipage}[b]{1\textwidth}
\centering
\begin{overpic}[width=0.49\textwidth,trim={0mm 0mm 0mm 0mm},clip]{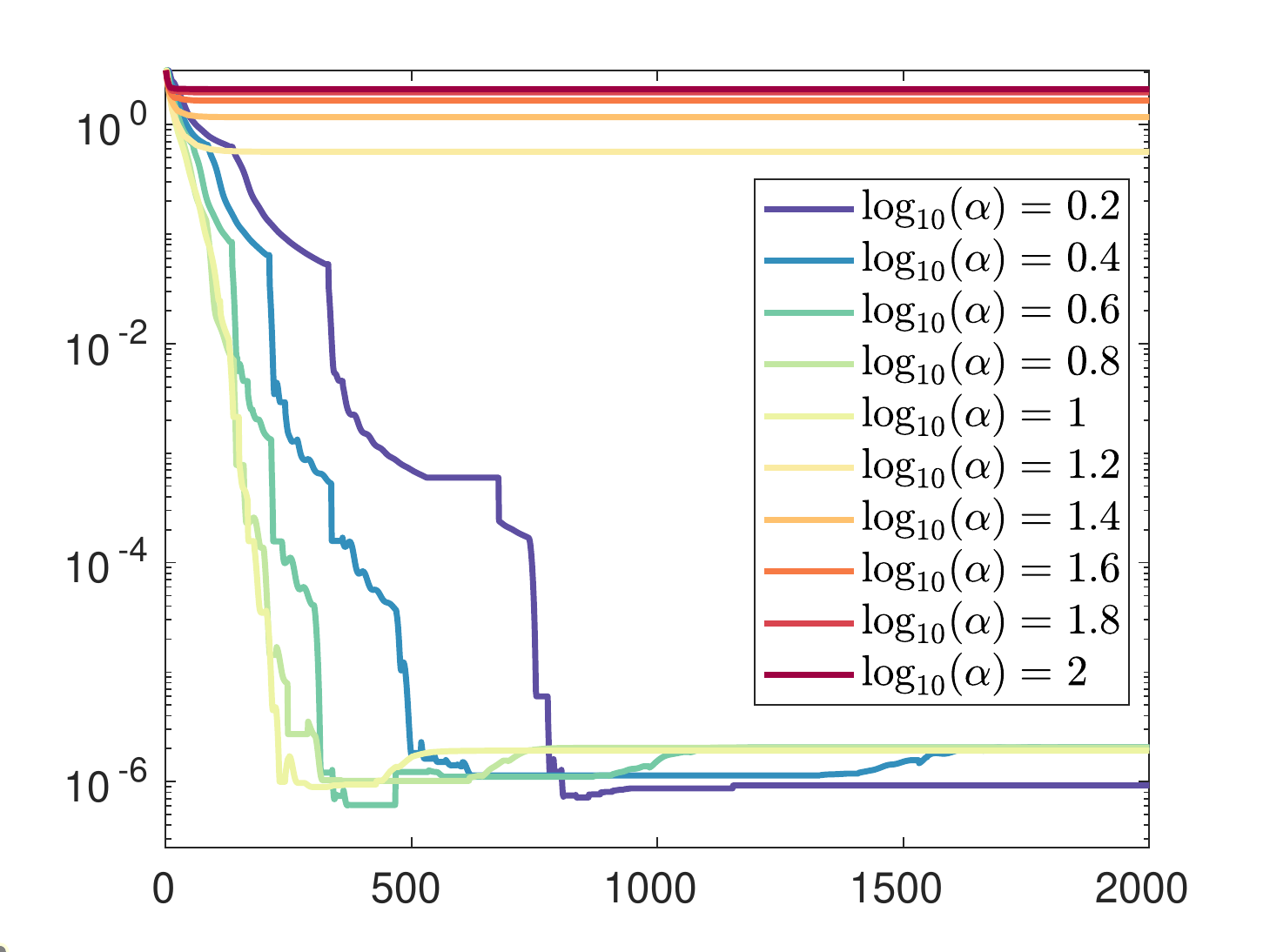}
\put (29,-2) {\small Total inner iterations $t$}
		\put (-1,27) {\small\rotatebox{90}{$\nmu{x_t - x}_{\ell^2}$}}
   \end{overpic}\vspace{2mm}
\begin{overpic}[width=0.49\textwidth,trim={0mm 0mm 0mm 0mm},clip]{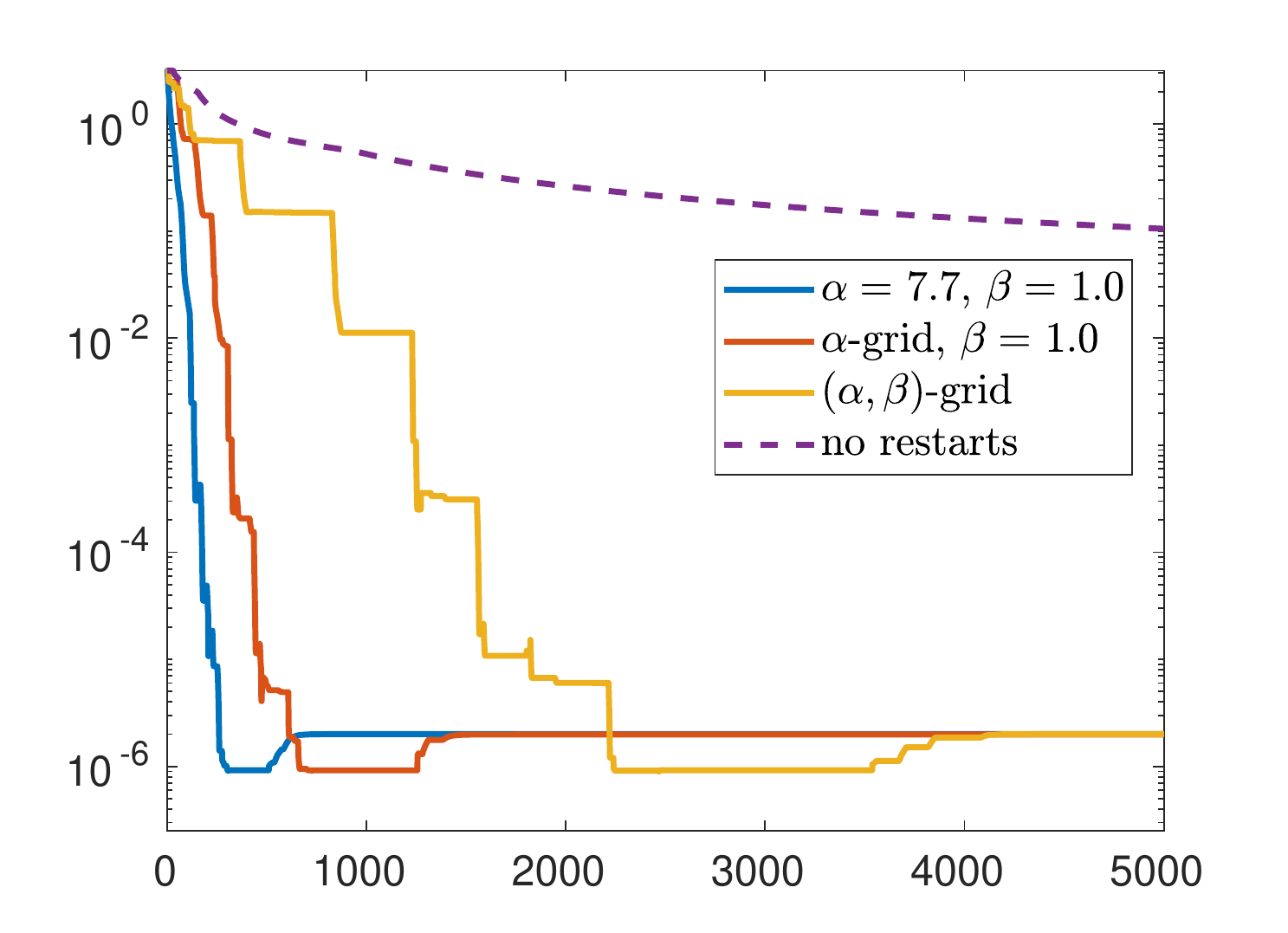}
\put (29,-2) {\small Total inner iterations $t$}
		\put (-1,27) {\small\rotatebox{90}{$\nmu{x_t - x}_{\ell^2}$}}
   \end{overpic}
\end{minipage}
\caption{Reconstruction error of restarted primal-dual iteration for QCBP with $\varsigma = 10^{-6}$. Left: The restart scheme with fixed sharpness constants $\beta = 1$ and various $\alpha$. Right: Various different schemes (including restarted and non-restarted schemes).}
\label{fig:QCBP-Gaussian}
\end{figure}

We use \cref{alg:restart-unknown-constsC} to overcome such parameter sensitivity. \cref{fig:QCBP-Gaussian} also compares the performance of the restart scheme with fixed $(\alpha,\beta) = (\sqrt{m},1)$ with restart schemes that (i) perform a grid search over $\alpha$, for fixed $\beta = 1$, and (ii) perform a grid search over both $\alpha$ and $\beta$. Both grid search schemes exhibit linear convergence, in agreement with \cref{thm:MAIN}. They converge less rapidly than the scheme with fixed $(\alpha,\beta)$ but require no empirical parameter tuning. Note that all restart schemes significantly outperform the non-restarted primal-dual iteration (``no restarts'').

\begin{figure}
\centering
\begin{minipage}[b]{1\textwidth}
\centering
\begin{overpic}[width=0.49\textwidth,trim={0mm 0mm 0mm 0mm},clip]{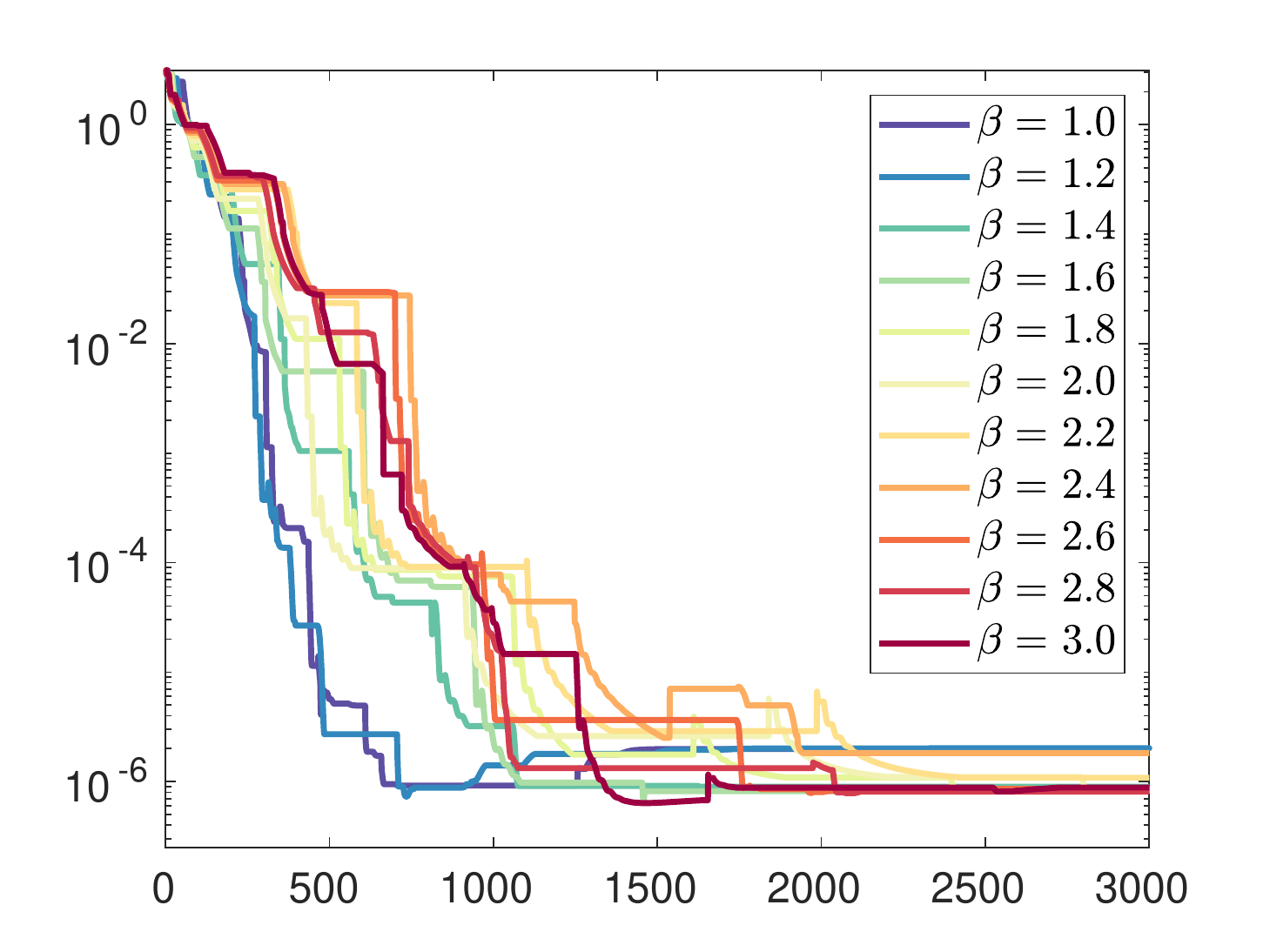}
\put (31,-2) {\small Total inner iterations $t$}
		\put (-1,27) {\small\rotatebox{90}{$\nmu{x_t - x}_{\ell^2}$}}
		\put (35,72) {\small Grid search over $\alpha$}
   \end{overpic}\hfill
\begin{overpic}[width=0.49\textwidth,trim={0mm 0mm 0mm 0mm},clip]{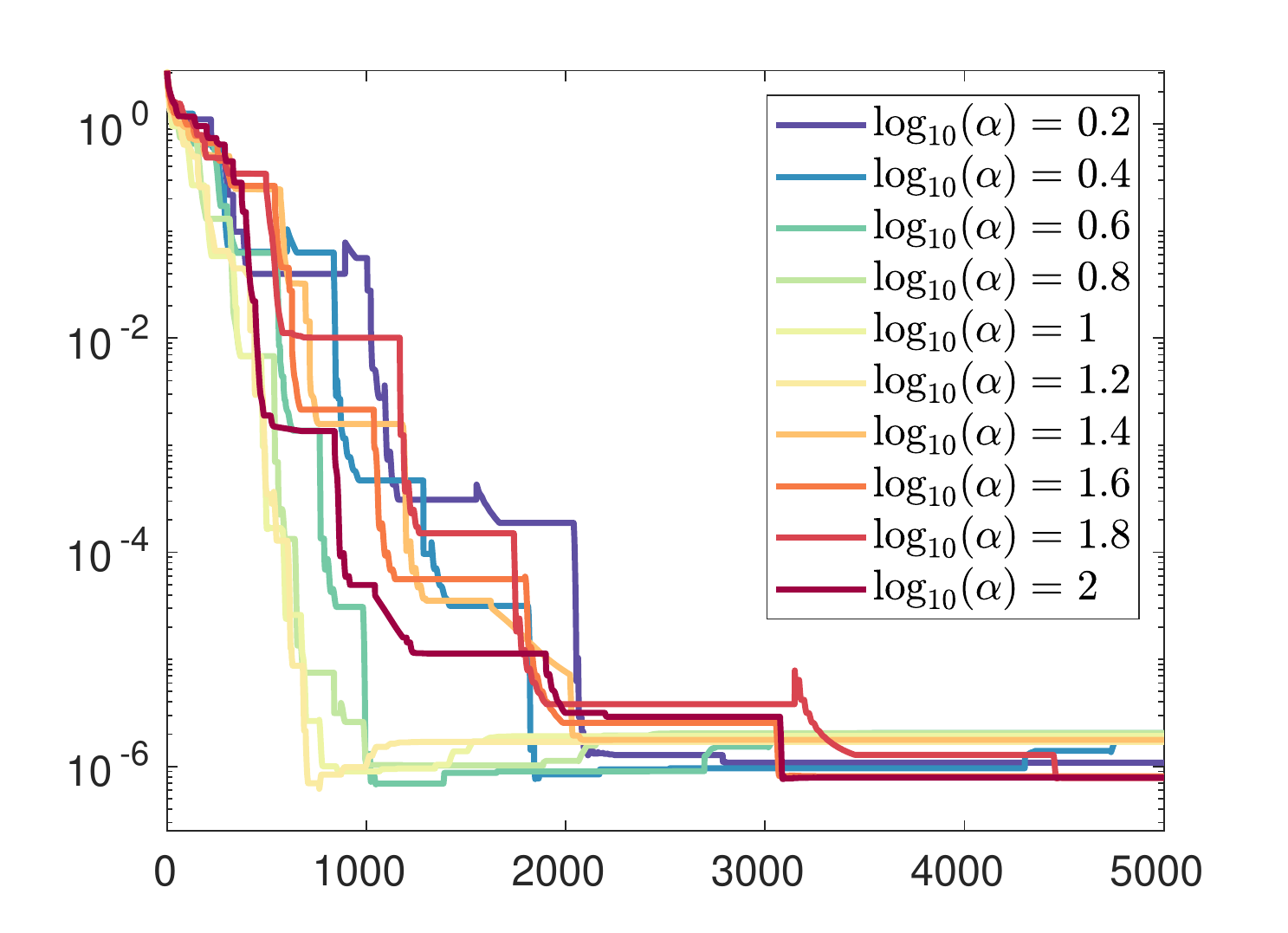}
\put (31,-2) {\small Total inner iterations $t$}
		\put (-1,27) {\small\rotatebox{90}{$\nmu{x_t - x}_{\ell^2}$}}
		\put (35,72) {\small Grid search over $\beta$}
   \end{overpic}
\end{minipage}
\caption{Reconstruction error of restarted primal-dual iteration for QCBP with $\varsigma = 10^{-6}$. Left: The restart scheme with grid search over $\alpha$ and various fixed $\beta$. Right: The restart scheme with grid search over $\beta$ and various fixed $\alpha$.}
\label{fig:QCBP-Gaussian-single-grid-search}
\end{figure}

\begin{figure}
\centering
\begin{minipage}[b]{1\textwidth}
\centering
\begin{overpic}[width=0.32\textwidth,trim={0mm 0mm 0mm 0mm},clip]{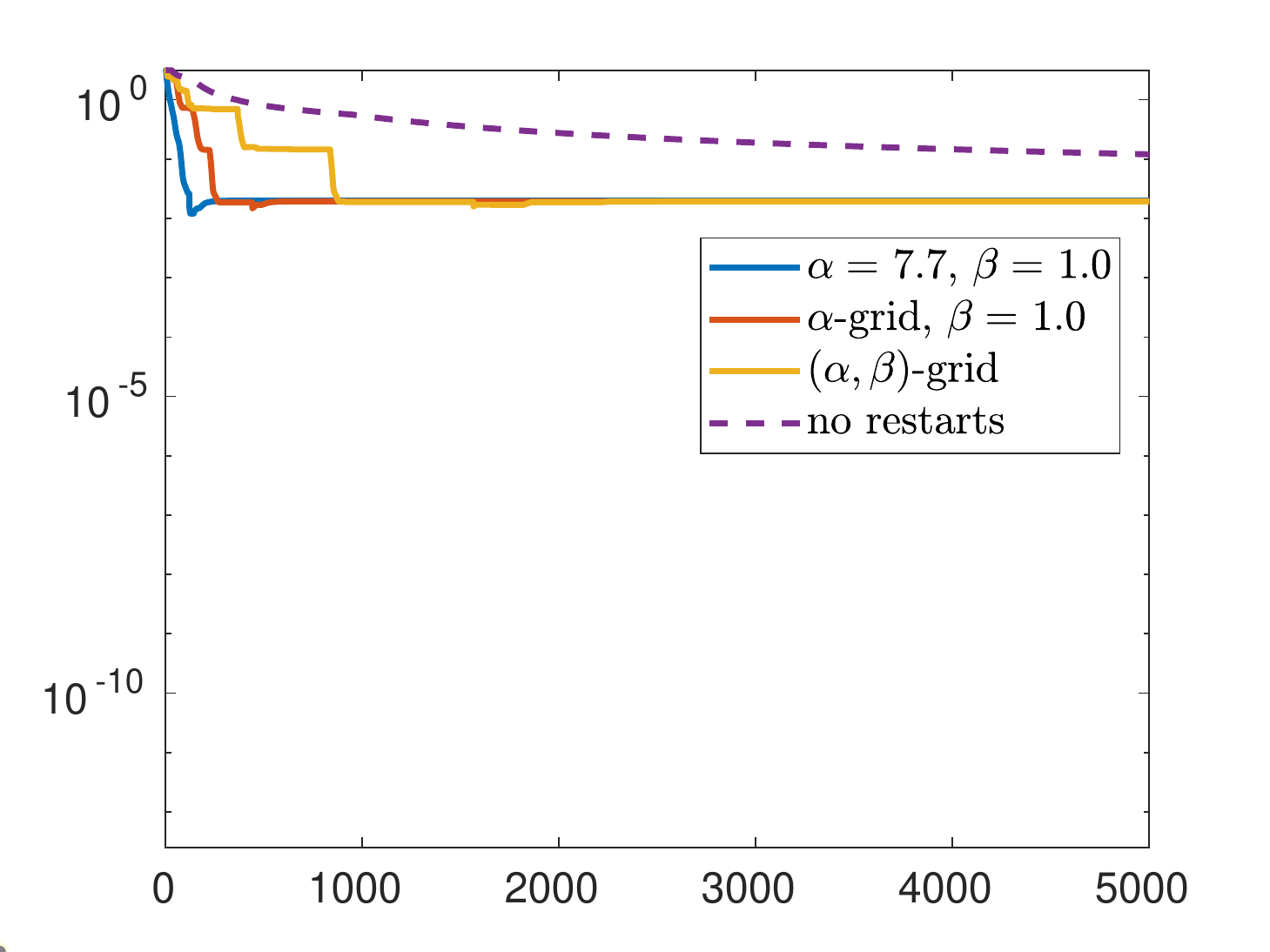}
\put (20,-2) {\small Total inner iterations $t$}
\put (42,72) {\small$\varsigma = 10^{-2}$}
\put (-2,25) {\small\rotatebox{90}{$\nmu{x_t - x}_{\ell^2}$}}
\end{overpic}
\begin{overpic}[width=0.32\textwidth,trim={0mm 0mm 0mm 0mm},clip]{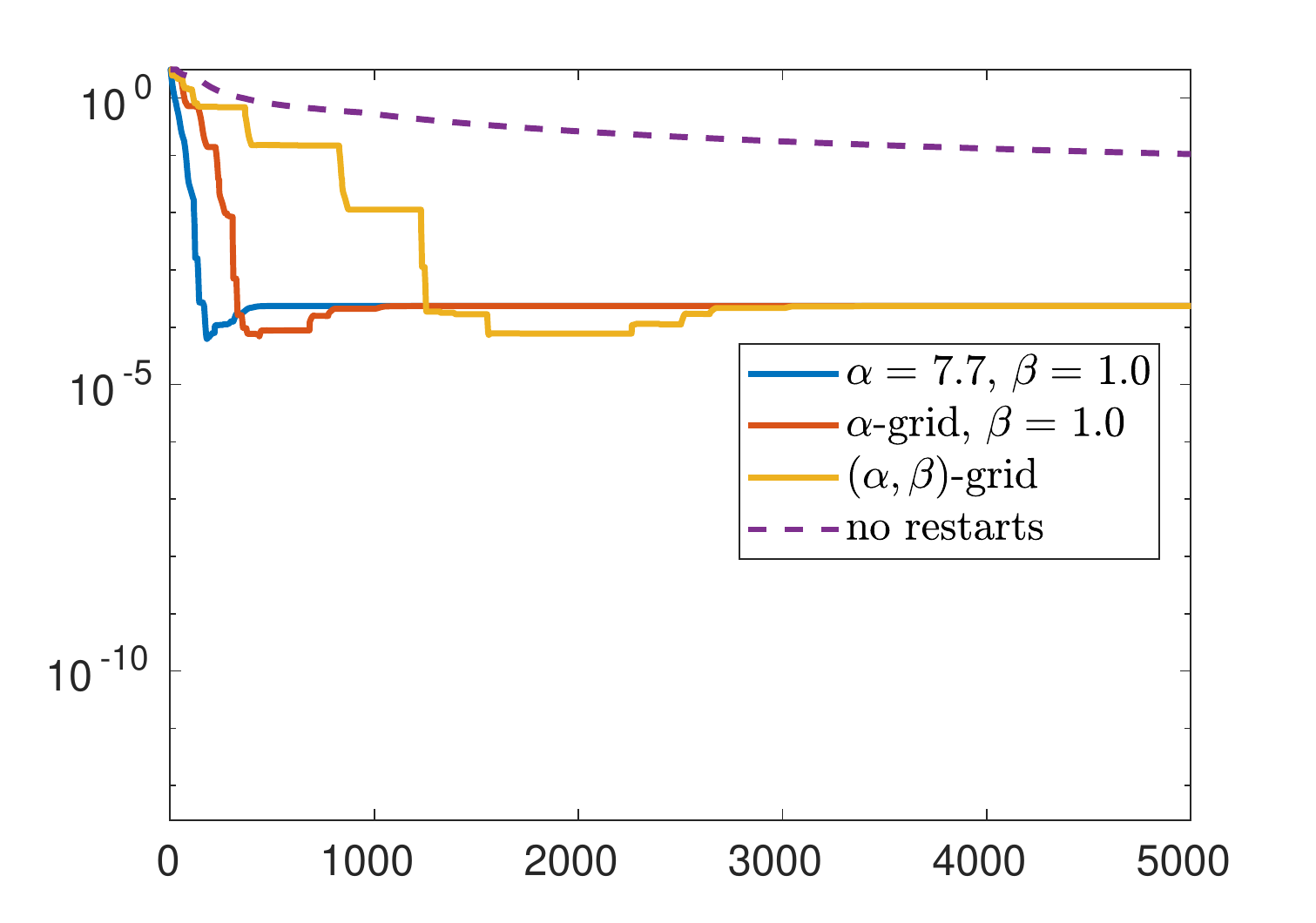}
\put (20,-2) {\small Total inner iterations $t$}
\put (42,72) {\small$\varsigma = 10^{-4}$}
\put (-2,25) {\small\rotatebox{90}{$\nmu{x_t - x}_{\ell^2}$}}
\end{overpic}
\begin{overpic}[width=0.32\textwidth,trim={0mm 0mm 0mm 0mm},clip]{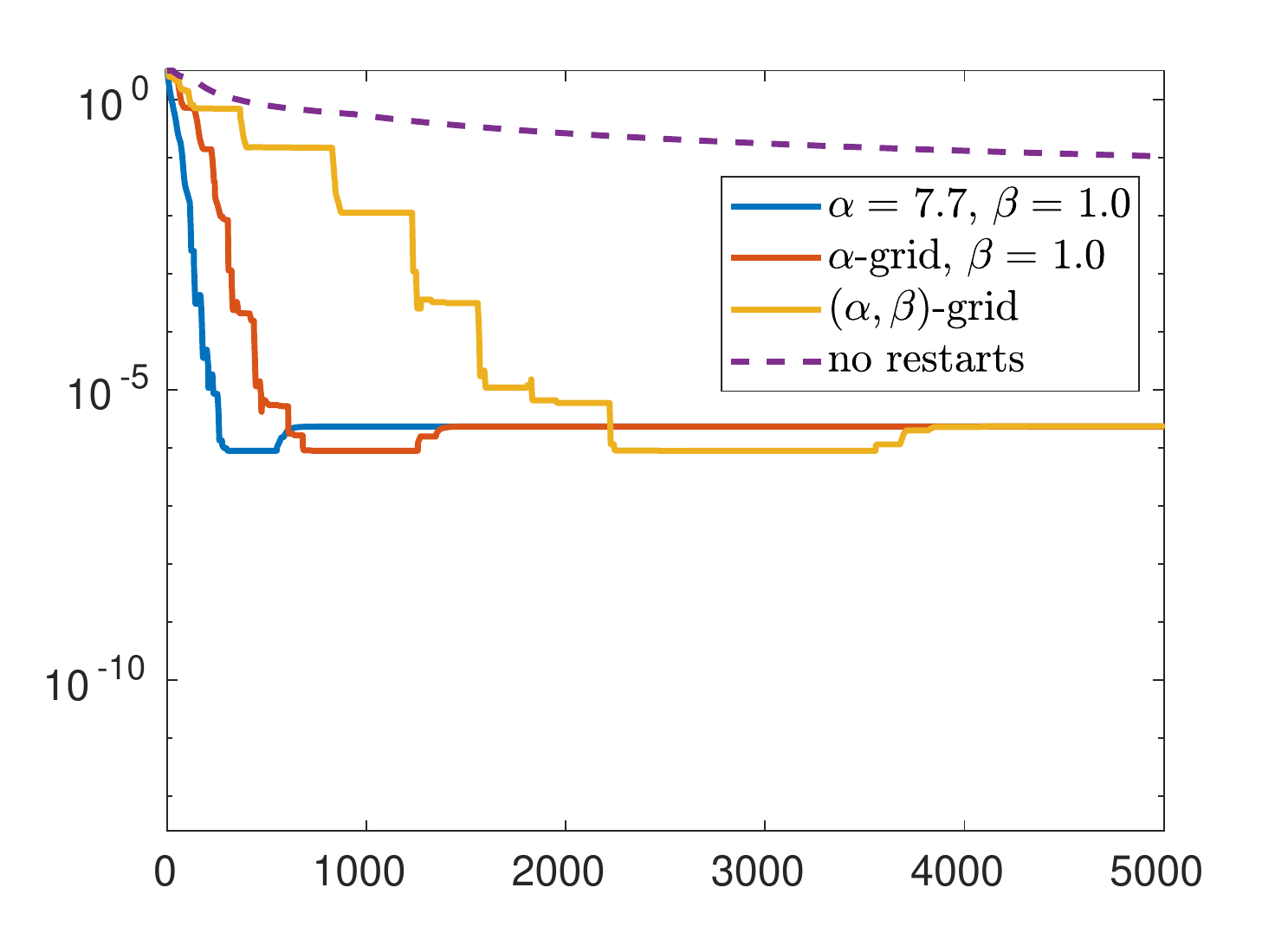}
\put (20,-2) {\small Total inner iterations $t$}
\put (42,72) {\small$\varsigma = 10^{-6}$}
\put (-2,25) {\small\rotatebox{90}{$\nmu{x_t - x}_{\ell^2}$}}
\end{overpic}\\\vspace{8mm}
\begin{overpic}[width=0.32\textwidth,trim={0mm 0mm 0mm 0mm},clip]{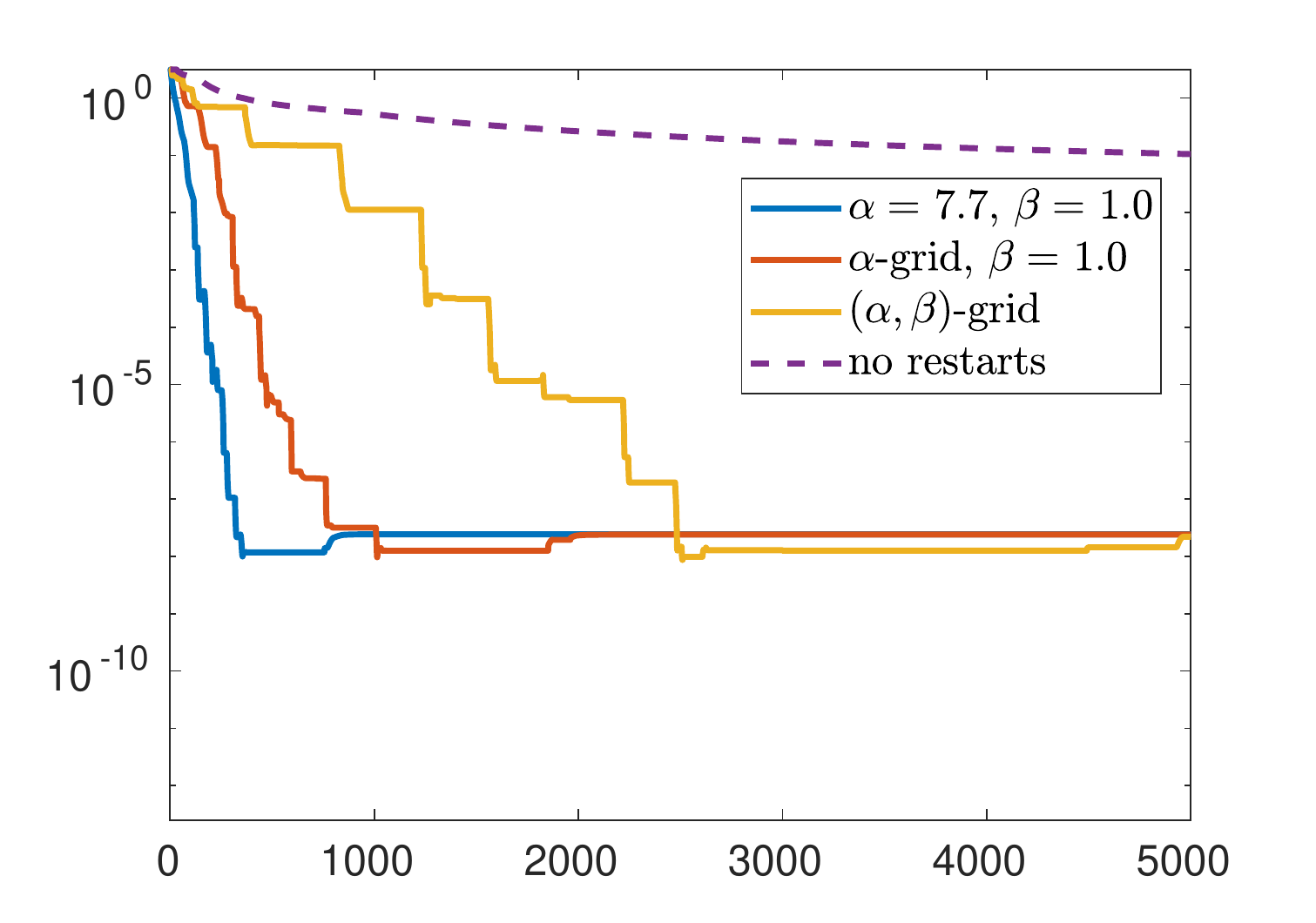}
\put (20,-2) {\small Total inner iterations $t$}
\put (42,72) {\small$\varsigma = 10^{-8}$}
\put (-2,25) {\small\rotatebox{90}{$\nmu{x_t - x}_{\ell^2}$}}
\end{overpic}
\begin{overpic}[width=0.32\textwidth,trim={0mm 0mm 0mm 0mm},clip]{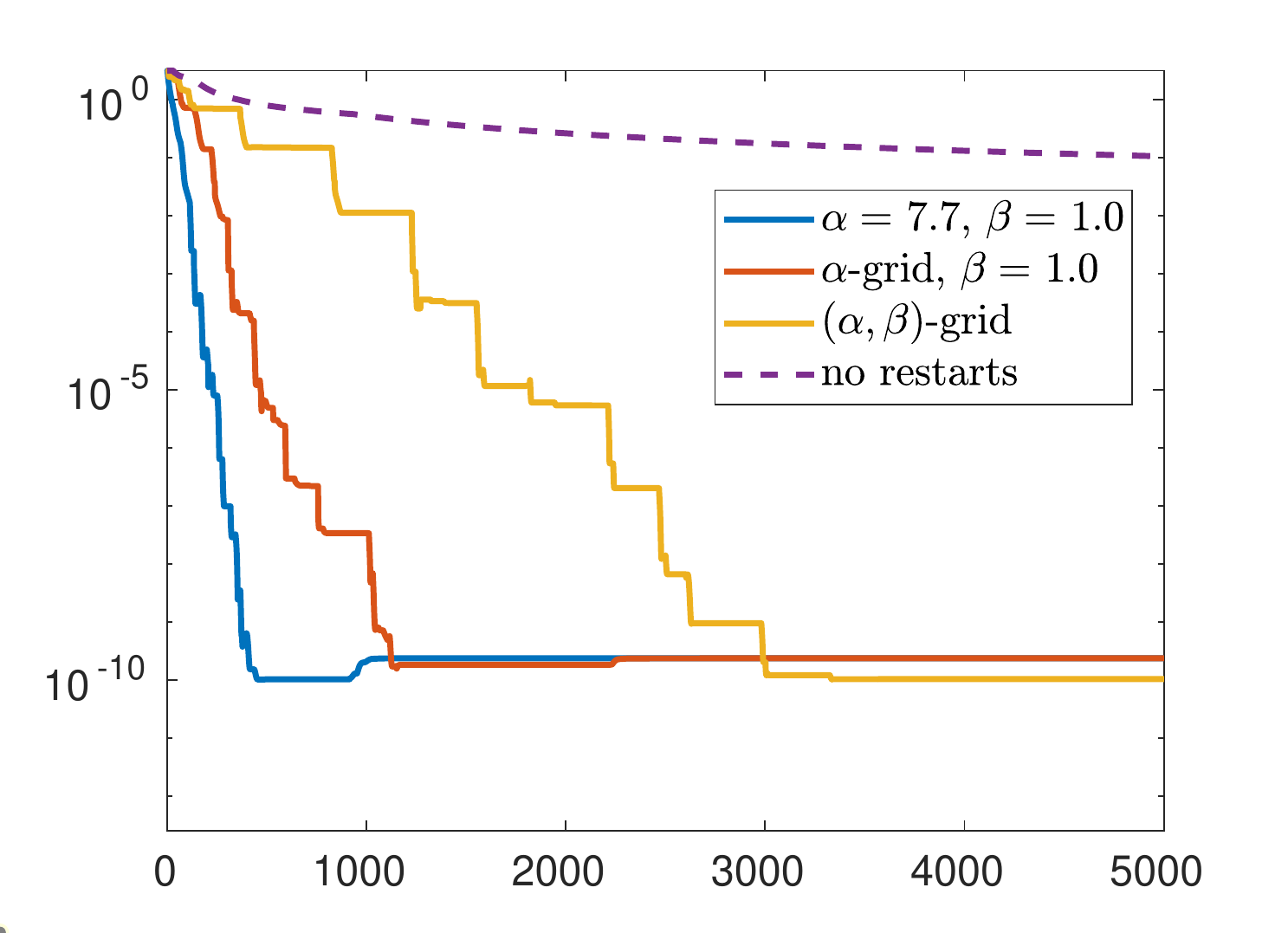}
\put (20,-2) {\small Total inner iterations $t$}
\put (42,72) {\small$\varsigma = 10^{-10}$}
\put (-2,25) {\small\rotatebox{90}{$\nmu{x_t - x}_{\ell^2}$}}
\end{overpic}
\begin{overpic}[width=0.32\textwidth,trim={0mm 0mm 0mm 0mm},clip]{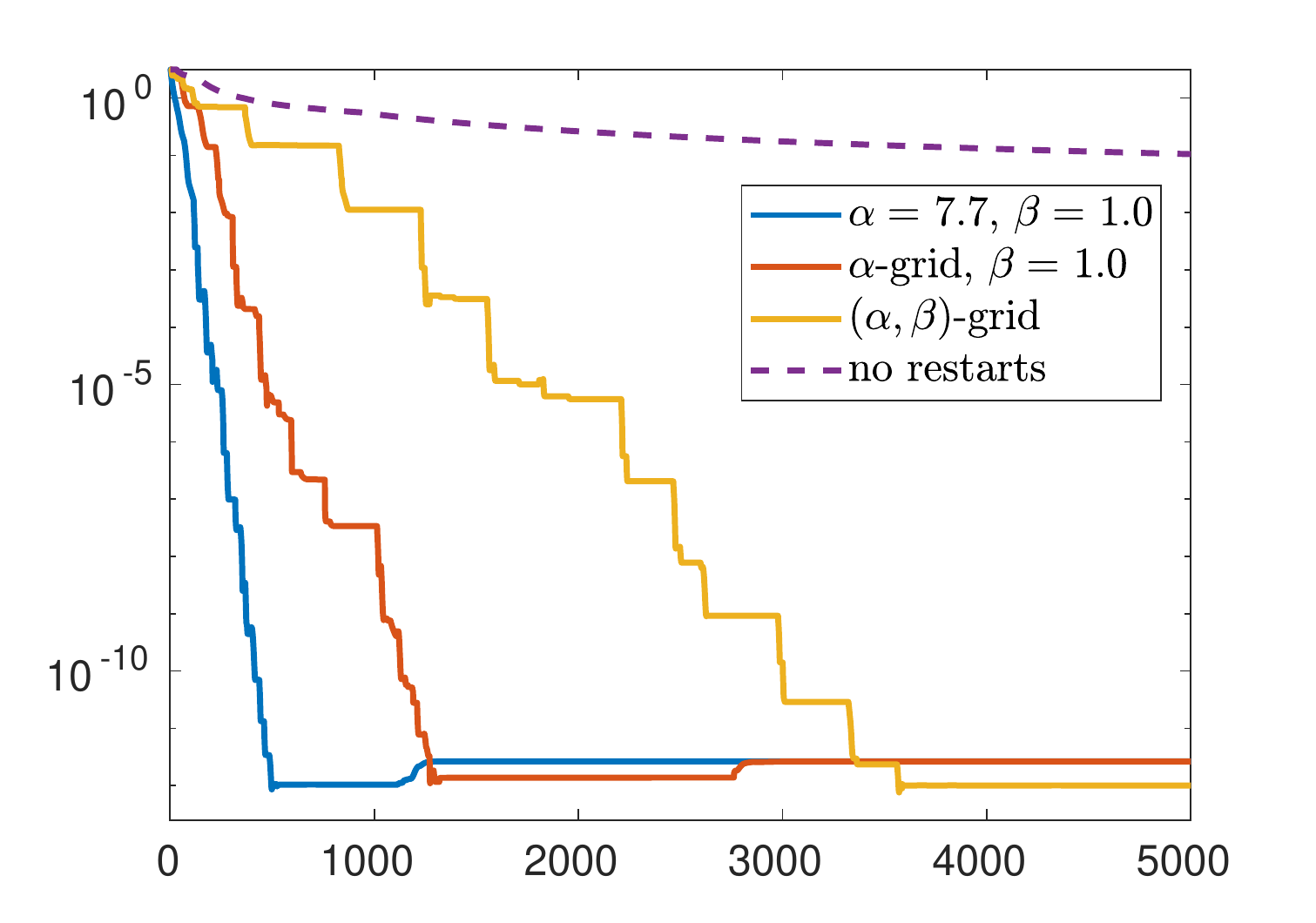}
\put (20,-2) {\small Total inner iterations $t$}
\put (42,72) {\small$\varsigma = 10^{-12}$}
\put (-2,25) {\small\rotatebox{90}{$\nmu{x_t - x}_{\ell^2}$}}
\end{overpic}
\end{minipage}
\caption{Reconstruction error of restarted primal-dual iteration for QCBP with $\varsigma = 10^{-2k}$ for $k = 1,2,\dots,6$. Each plot includes the various (restarted and non-restarted) schemes.}
\label{fig:QCBP-Gaussian-noise-levels}
\end{figure}

Next, we consider two cases of grid searching over exactly one sharpness constant and leaving the other fixed. \cref{fig:QCBP-Gaussian-single-grid-search} shows the results for fixed $\alpha$ with $\beta$ grid search and fixed $\beta$ with $\alpha$ grid search. Both yield linear decay, although at a slightly worse rate. A key point to note is the potential benefit of grid searching. Compare the reconstruction error with those for the fixed restart schemes in \cref{fig:QCBP-Gaussian} with $\log_{10}(\alpha) \geq 1.2$ and $\beta = 1$. In the fixed constant scheme, these parameter choices stall the error. However, $\beta$ grid search overcomes this and reconstructs $x$ within a tolerance proportional to $\varsigma$ after sufficiently many restarts.

Finally, \cref{fig:QCBP-Gaussian-noise-levels} considers the effect on the restart schemes when changing the noise level $\varsigma$. In all cases, the restart schemes linearly decay to a tolerance proportional to $\varsigma$, outperforming the non-restarted primal-dual iterations.

\subsection{Image reconstruction via TV minimization}
\label{sec:ne-tv-minimization}

In this experiment, we consider image reconstruction with Fourier measurements -- a sensing modality with applications notably in Magnetic Resonance Imaging (MRI) \cite{adcock2021compressive}. Specifically, we consider the recovery of a vector $x \in \bbR^n$ from noisy Fourier measurements $y = Ax + e \in \bbC^m$, where $A \in \bbC^{m \times n}$ corresponds to a subsampled Fourier matrix and $e \in \bbC^m$ models noise or perturbations. The vector $x$ is a vectorized complex 2-D image $X \in \bbC^{R \times R}$, where $n = R^2$ for some positive power-of-two integer $R$. The matrix $A$ has the form $A = m^{-1/2} P_\Omega F$, where $F \in \bbC^{n \times n}$ is the 2-D discrete Fourier transform and $\Omega \subseteq n$ is a sampling mask with $|\Omega| = m$. Here, $\Omega$ defines the matrix $P_\Omega \in \bbC^{m \times n}$, which selects the rows of $F$ by index according to the indices in $\Omega$. Lastly, $\nmu{e}_{\ell^2} \leq \varsigma$ for some noise level $\varsigma>0$. A widely used tool for reconstructing $x$ from $y$ is the \textit{total variation (TV) minimization} problem
$$
\min_{z \in \bbC^n} \nmu{V z}_{\ell^1} \ \text{subject to} \ \nmu{Az-y}_{\ell^2} \leq \varsigma,
$$
where $V$ is the 2-D (anisotropic) discrete gradient transform with periodic boundary conditions \cite{adcock2021improved}.

Similar to the sparse recovery problem described in the previous section, the TV-Fourier image reconstruction problem can be shown to have the approximate sharpness condition \eqref{eqn:sharpness} with high probability under a suitable random sampling pattern $\Omega$. Stating and proving this is more involved but can be done by carefully adapting the analysis within \cite[Sec. 7.4]{adcock2021improved}.

\subsubsection{Experimental setup}

The first-order solver we use is NESTA (NESTerov's Algorithm), an accelerated projected gradient descent algorithm used to solve problems of the form
$$
\min_{z \in \bbC^n} \nmu{W^* z}_{\ell^1} \ \text{subject to} \ \nmu{Az-y}_{\ell^2} \leq \varsigma, \qquad W \in \bbC^{n \times m'},
$$
where TV minimization is a special case with $W = V^\top$. NESTA is derived from Nesterov's method with smoothing, where the objective function $f(z) = \nmu{W^*z}_{\ell^1}$ is smoothed by replacing the $\ell^1$-norm with its Moreau envelope. This yields a $1/\mu$-smooth approximation $f_\mu(z) = \nmu{W^*z}_{\ell^1,\mu}$ of $f$ with parameters $(\nmu{W^*}_{\ell^2}^2,m'/2)$.
Here $\nmu{w}_{\ell^1,\mu} = \sum_{i=1}^{m'} |w_i|_\mu$ for $w = (w_i)_{i=1}^{m'}$ and $|\cdot|_\mu$ is the complex Huber function (see, e.g., \cite{neyra-nesterenko2022nestanets}). In particular, we have $\nmu{V}_{\ell^2} = 2 \sqrt{2}$ for TV minimization in 2-D.

The second part of the derivation of NESTA is finding closed-form expressions for the update steps. In general, this is only possible to do in special cases. However, NESTA considers $A$ with orthonormal rows up to a constant factor, i.e., $AA^* = \nu I$ for some $\nu > 0$. Such an assumption yields a closed form for the update formulas and is not unreasonable since many forward operators in compressive imaging have orthonormal rows. For example, with the subsampled Fourier matrix, we have $AA^* = (N/m)I$; hence, the desired property holds with $\nu = N/m$.

We reconstruct an $R \times R$ GPLU phantom image \cite{guerquin--kern2012realistic} with $R = 512$ so that the ambient dimension is $n = 512^2$. The noise $e$ is uniformly sampled from an $\ell^2$-ball of radius $\varsigma = 10^{-5}$, and so $\nmu{e}_{\ell^2} = \varsigma$. Two sampling masks are considered for the subsampled Fourier matrix $A$ and are shown in \cref{fig:TV-Fourier-masks}. The first is a near-optimal sampling scheme \cite[Sec.~4.2]{adcock2021improved}, and the second is a radial sampling scheme, where the latter is common in practice. Each mask yields approximately a 12.5\% sampling rate. For the restart scheme, the objective function is $f(z) = \nmu{Vx}_{\ell^1}$ and the feasibility gap $g_Q \equiv 0$ since NESTA always produces feasible iterates. The smoothing parameters $\mu$ are handled directly by the restarting procedure and explicitly depend on $\epsilon_{i,j,U}$ (see \cref{prop:nesterov-smoothing-Gamma}). The two main experiments were done for each of the two sampling masks. Lastly, we choose $\alpha_0 = \sqrt{m}$, $\beta_0 = 1$. The choice of $\alpha_0$ is motivated by \cite[Theorem 6.3]{colbrook2021warpd} which generalizes \cref{prop:sharpness_of_QCBP}.

\begin{figure}
\centering
\begin{minipage}[b]{1\textwidth}
\centering
\begin{overpic}[width=0.49\textwidth,trim={0mm 0mm 0mm 0mm},clip]{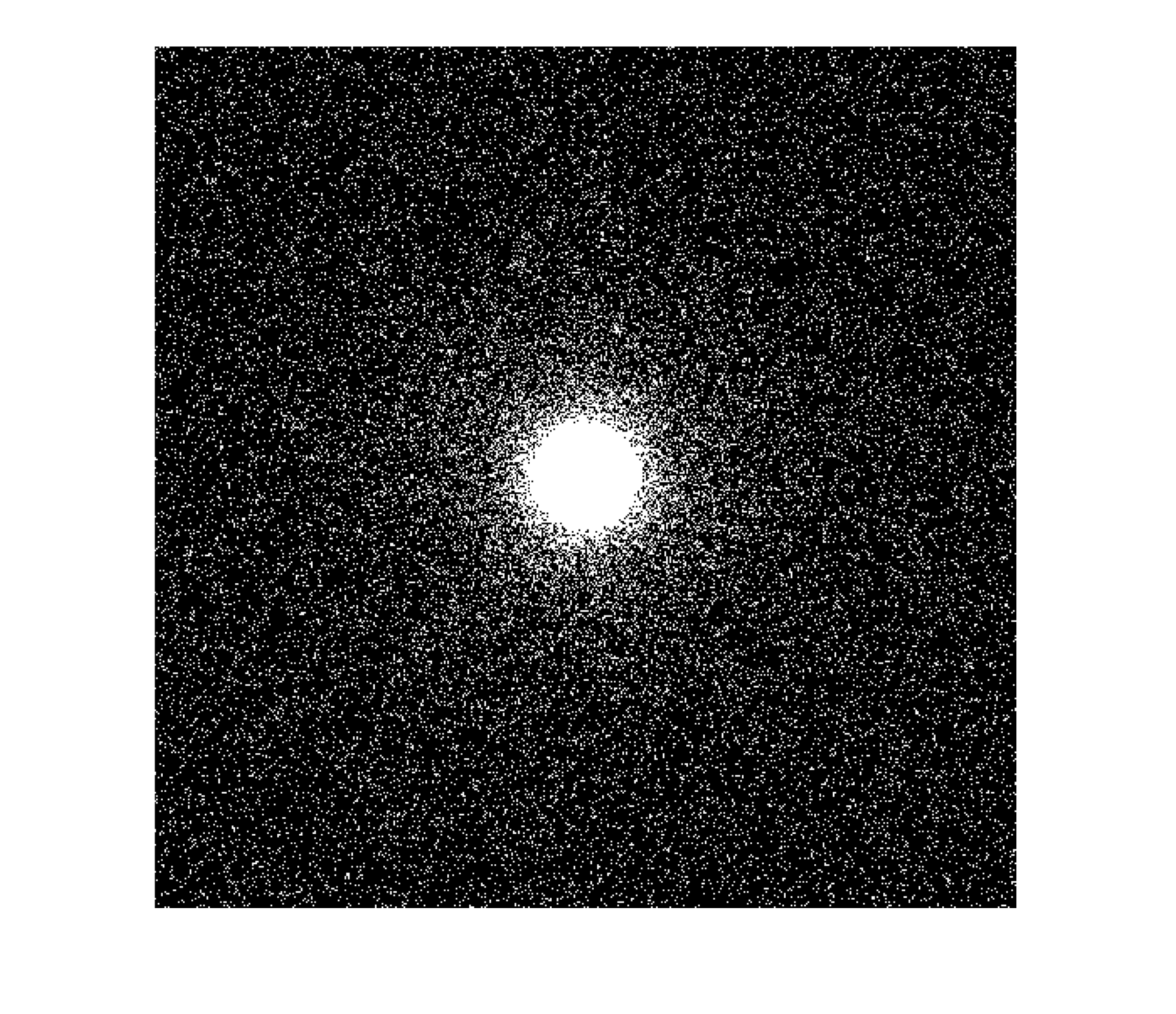}
\put (23,85) {\small Near-optimal sampling mask}
   \end{overpic}\vspace{2mm}
\begin{overpic}[width=0.49\textwidth,trim={0mm 0mm 0mm 0mm},clip]{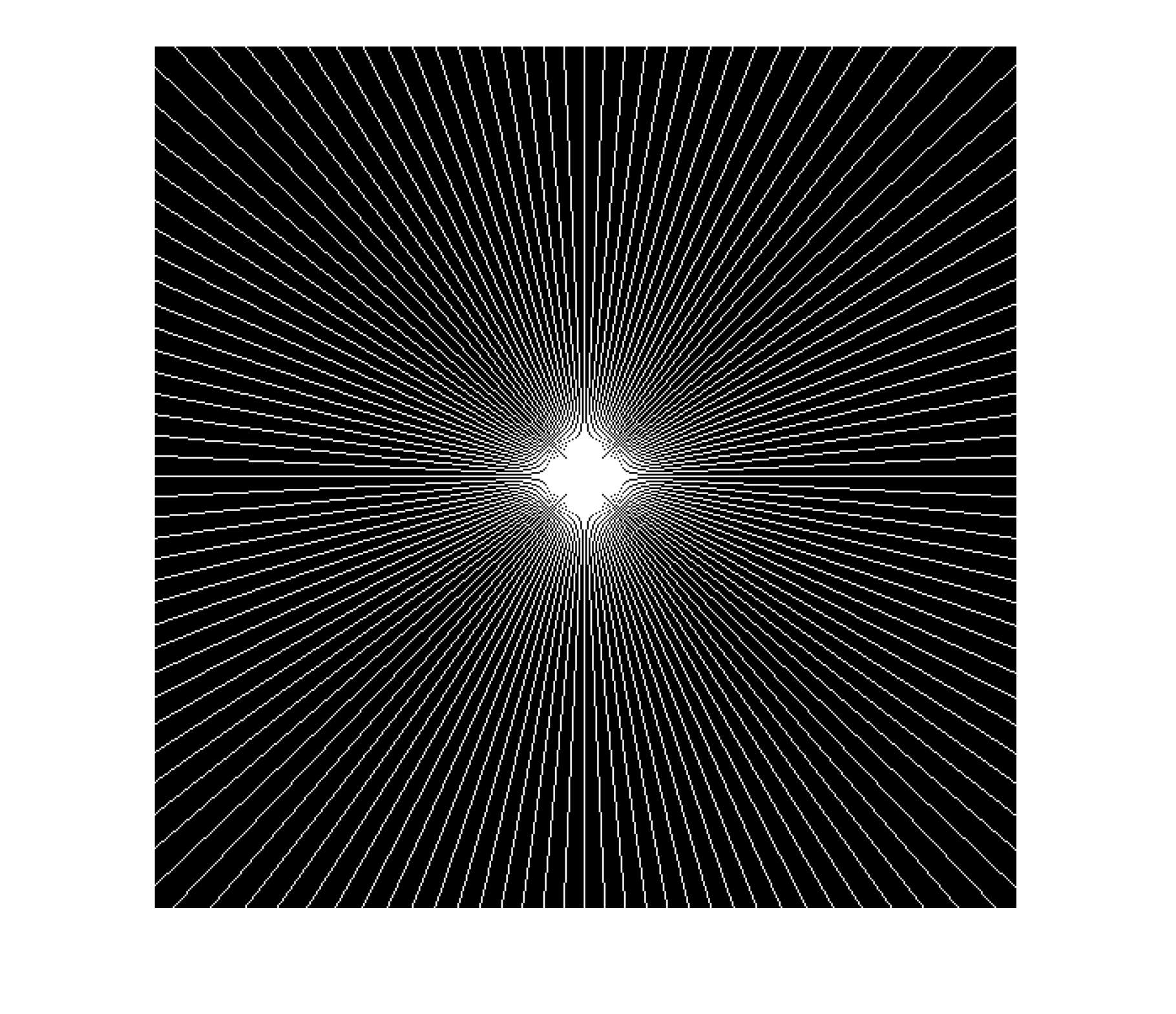}
\put (28,85) {\small Radial sampling mask}
   \end{overpic}\vspace{-5mm}
\end{minipage}\vspace{-3mm}
\caption{Sampling patterns for the Fourier measurements used in the image reconstruction experiments.}
\label{fig:TV-Fourier-masks}
\end{figure}

\subsubsection{Results}

\begin{figure}
\centering
\begin{minipage}[b]{1\textwidth}
\centering
\begin{overpic}[width=0.49\textwidth,trim={0mm 0mm 0mm 0mm},clip]{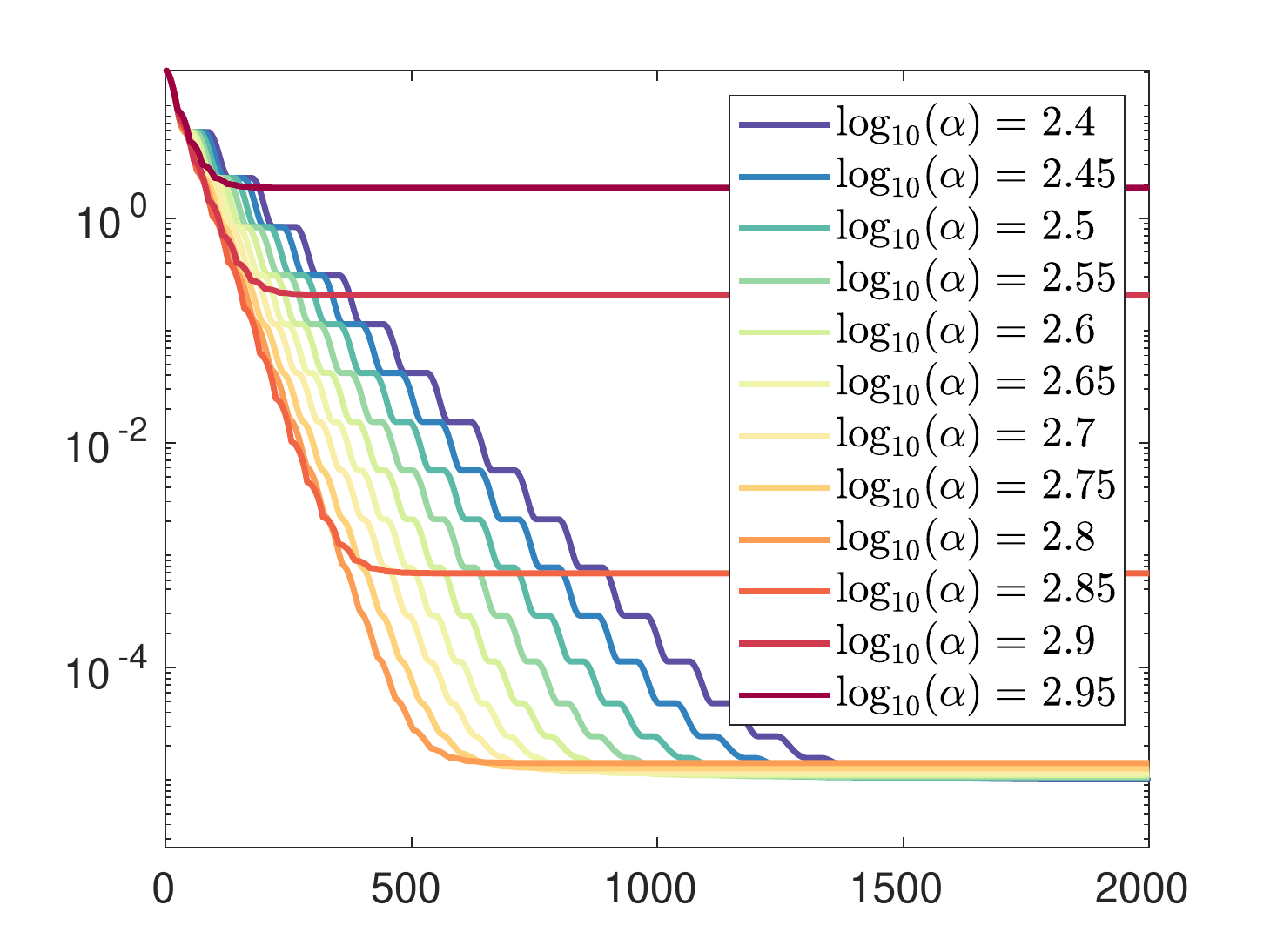}
\put (29,-2) {\small Total inner iterations $t$}
		\put (-1,27) {\small\rotatebox{90}{$\nmu{x_t - x}_{\ell^2}$}}
		\put (25,72) {\small Near-optimal sampling mask}
   \end{overpic}\vspace{2mm}
\begin{overpic}[width=0.49\textwidth,trim={0mm 0mm 0mm 0mm},clip]{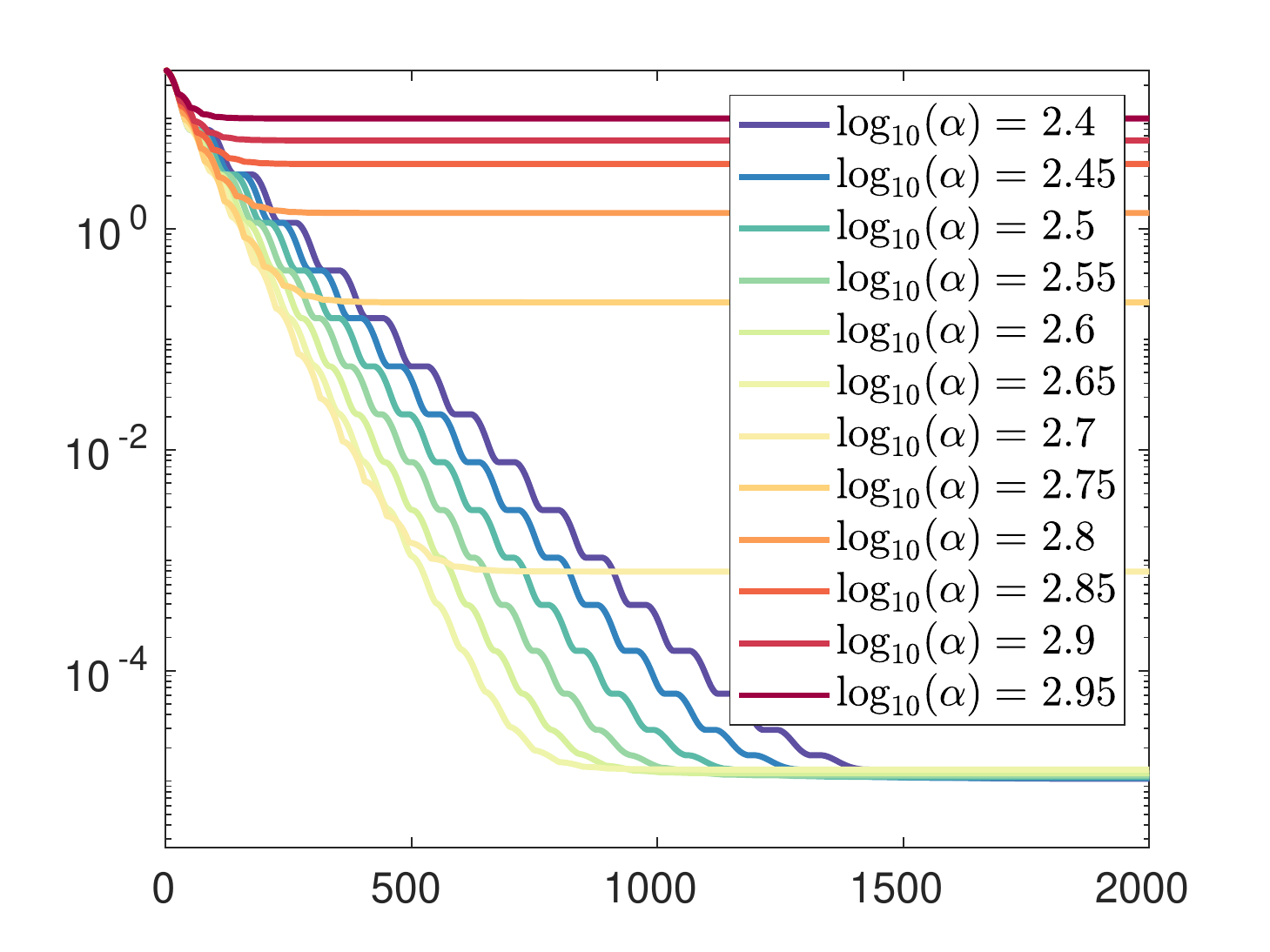}
\put (29,-2) {\small Total inner iterations $t$}
		\put (-1,27) {\small\rotatebox{90}{$\nmu{x_t - x}_{\ell^2}$}}
		\put (31,72) {\small Radial sampling mask}
   \end{overpic}\vspace{2mm}
\end{minipage}
\caption{Reconstruction error of restarted NESTA for TV minimization with $\varsigma = 10^{-5}$, and with the near-optimal and radial sampling masks, respectively. The restart scheme uses fixed sharpness constants $\beta=1$ and various $\alpha$.}
\label{fig:TVmin-Fourier-1}
\end{figure}

\begin{figure}
\centering
\begin{minipage}[b]{1\textwidth}
\centering
\begin{overpic}[width=0.49\textwidth,trim={0mm 0mm 0mm 0mm},clip]{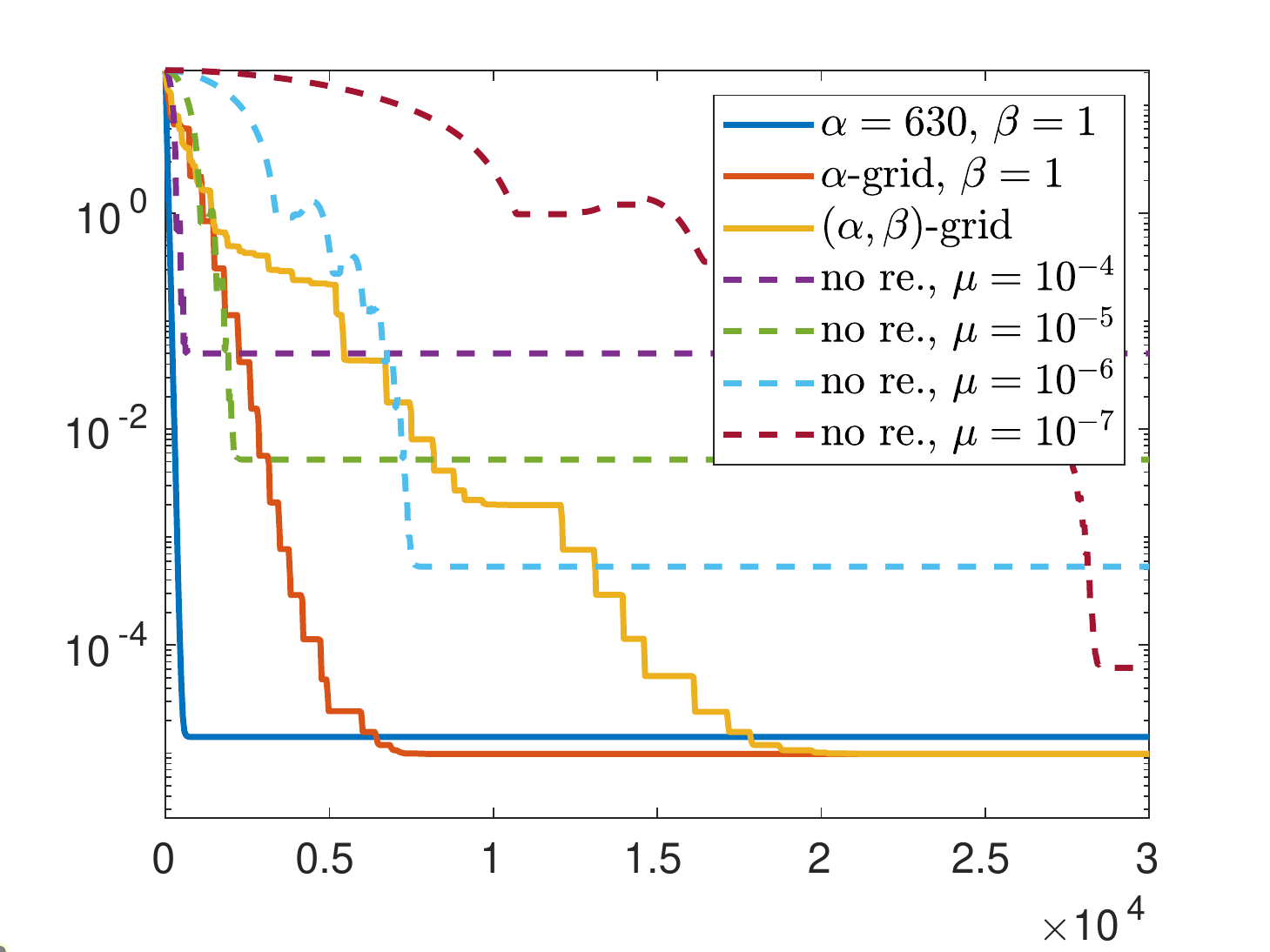}
\put (29,-2) {\small Total inner iterations $t$}
		\put (-1,27) {\small\rotatebox{90}{$\nmu{x_t - x}_{\ell^2}$}}
		\put (25,72) {\small Near-optimal sampling mask}
   \end{overpic}
\begin{overpic}[width=0.49\textwidth,trim={0mm 0mm 0mm 0mm},clip]{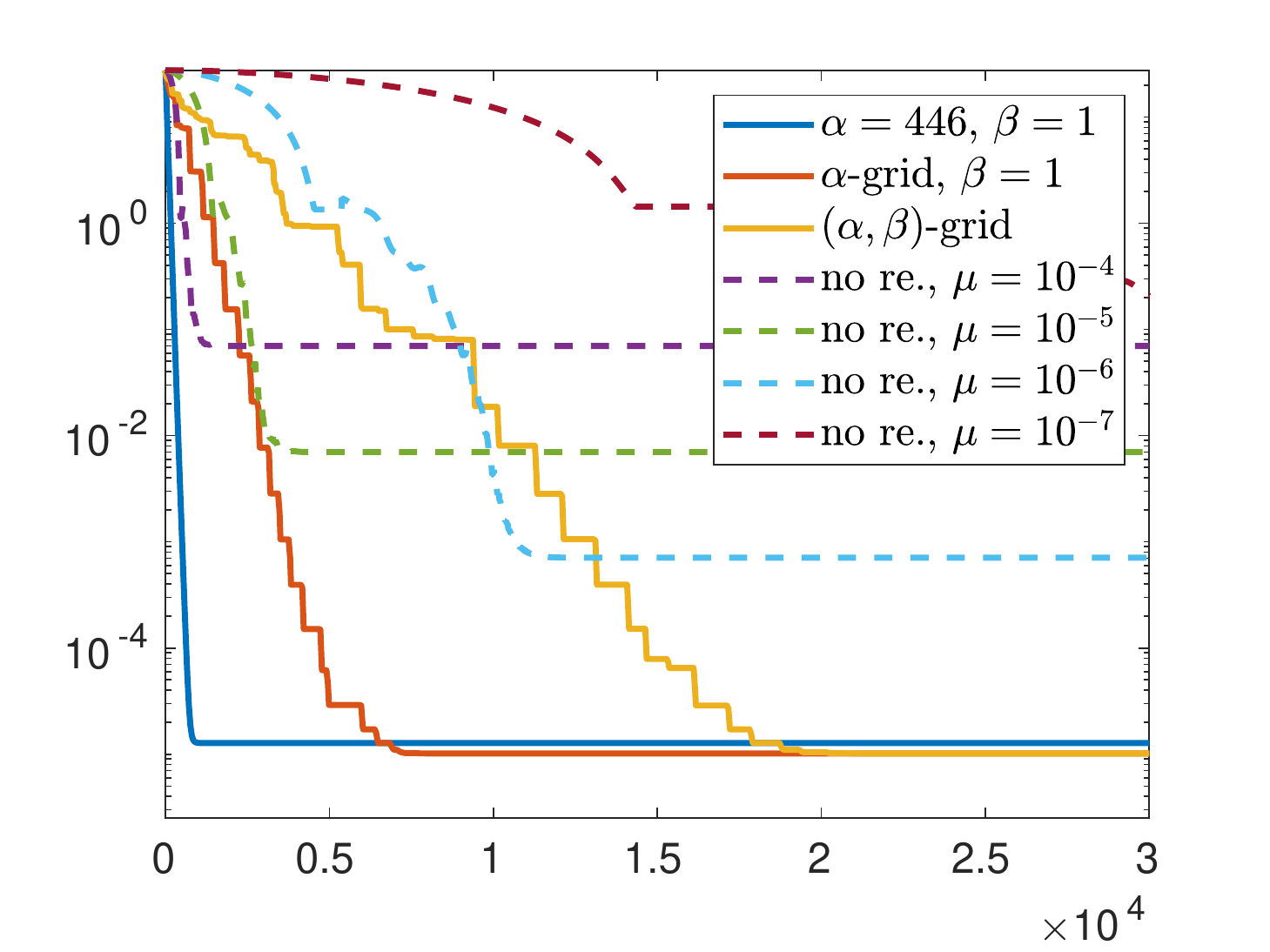}
\put (29,-2) {\small Total inner iterations $t$}
		\put (-1,27) {\small\rotatebox{90}{$\nmu{x_t - x}_{\ell^2}$}}
		\put (31,72) {\small Radial sampling mask}
   \end{overpic}
\end{minipage}
\caption{Reconstruction error of restarted NESTA for TV minimization with $\varsigma = 10^{-5}$, and with the near-optimal and radial sampling masks, respectively. Various restarted and non-restarted schemes are used.}
\label{fig:TVmin-Fourier-2}
\end{figure}

First, we run the restart scheme with fixed sharpness constants (no grid search) corresponding to pairs $(\alpha,\beta)$ with $\beta = 1$ and various $\alpha$ values. The reconstruction error versus total inner iterations is plotted in \cref{fig:TVmin-Fourier-1} with near-optimal sampling (left) and radial sampling (right). The results are very similar to the first sparse recovery via QCBP experiment. Again, the decay rate corresponds to linear decay as anticipated from \cref{thm:restart-unknown-consts-err}. The convergence rate improves as $\alpha$ increases, up to a threshold (about $\alpha = 630$ for near-optimal sampling, and about $\alpha = 446$ for radial sampling), where afterward the limiting tolerance increases steadily, yielding poor reconstruction results. This phenomenon is discussed in the first experiment of sparse recovery via QCBP. A key observation is how changing the sampling mask changes the threshold $\alpha$ value. This motivates using a grid search to avoid tuning $\alpha$ as a parameter for different sampling masks.

In the second experiment, we compare the reconstruction errors of several restart schemes and standalone NESTA (i.e., no restarts) with various smoothing parameters. This is shown in \cref{fig:TVmin-Fourier-2} with near-optimal sampling (left) and radial sampling (right). The smoothing parameters used are $\mu = 10^{i} \varsigma$, $i \in \{-2,1,0,1\}$. The results are analogous to the fourth experiment with sparse recovery via QCBP. The radial sampling mask produces marginally slower convergence rates than the near-optimal scheme. Moreover, we observe that converging to the limiting tolerance of NESTA is sensitive to the choice of smoothing parameter $\mu$. By making $\mu$ smaller, we better approximate the original problem and thus the reconstruction, but require more iterations to achieve a better approximation. In contrast, restarting NESTA via \cref{alg:restart-unknown-constsC} does not require any tuning of the smoothing parameter and outperforms the non-restarted algorithm.

\subsection{Feature selection via SR-LASSO}

Our third experiment considers feature selection via the \textit{Square Root LASSO (SR-LASSO)} problem \cite{adcock2022sparse,belloni2011square,belloni2014pivotal,geer2016estimation}. Let $X \in \bbR^{m \times n}$ be a data matrix, where each row corresponds to a data point, and each column corresponds to a feature, and $y \in \bbR^{m}$ the label vector for the data points. Since we wish to learn an \textit{affine} mapping from data points to labels, we augment $X$ by appending a new column consisting of ones, with the augmentation denoted by $A \in \bbR^{m \times (n+1)}$. Now fix $\lambda > 0$. Then we seek a vector $x \in \bbR^{n+1}$ that solves the SR-LASSO problem
$$
\min_{z \in \bbR^{n+1}} \nmu{Az-y}_{\ell^2} + \lambda \nmu{z}_{\ell^1}.
$$
An advantage of this problem over the classical LASSO is that it requires less tuning of the parameter $\lambda$ as the problem instance or noise level changes. See \cite{geer2016estimation} for discussion and recovery conditions for this problem. Feature selection is performed by identifying the indices of close-to-zero entries of $x$, which are the features to discard. This reduces the number of columns of $X$ for future processing or analysis.

The SR-LASSO is a well-known tool in high-dimensional statistics. It can also be used for sparse recovery problems, in which case approximate sharpness follows (like it did with QCBP) from the rNSP (\cref{def:rNSP}) \cite{colbrook2021warpd}. However, in the feature selection problem, properties such as the rNSP are unlikely to hold. In this case, more general recovery conditions for SR-LASSO (and LASSO), such as the \textit{compatibility condition} \cite{geer2016estimation}, are more useful. Under these conditions, one also has approximate sharpness with unknown constants.

\subsubsection{Setup}

We use the unconstrained primal-dual iterations (\cref{alg:PD_basic}) to solve SR-LASSO. We can express SR-LASSO as \eqref{PD_example_1_3} by
$$q \equiv 0, \quad g(x) = \lambda \nmu{x}_{\ell^1}, \quad h(Bx) = \nmu{Bx-y}_{\ell^2}, \quad B = A.$$
From this, the primal-dual updates can be computed explicitly. The proximal map $\tau g$ is the shrinkage-thresholding operator, and the proximal map of $\sigma h^*$ is a projection map onto the $\ell^2$-ball. In either case, the proximal maps are straightforward to compute. We compare the SR-LASSO objective error of various non-restarted and restarted schemes for three different datasets. The minimum of SR-LASSO for each dataset is computed using CVX \cite{cvx,gb08} with high precision and the SDPT3 solver and is used to compute the objective errors in \cref{fig:SRLASSO-data,fig:SRLASSO-beta}.

We use three datasets: wine quality (\texttt{wine}) \cite{misc_wine_quality_186} with $m = 6497$ points and $n = 11$ features, colon cancer (\texttt{cc}) \cite{misc_data_sets} with $m = 62$ points and $n = 2000$ features, and leukemia (\texttt{leu}) \cite{misc_data_sets} with $m = 38$ points and $n = 7129$ features. The \texttt{wine} data corresponds to a regression task of predicting wine quality, \texttt{cc} and \texttt{leu} are two-class classification tasks of diagnosing illness based on data features. We use $\lambda=3$, $2$, and $4$ for the \texttt{wine}, \texttt{cc}, and \texttt{leu} datasets, respectively. We measure sparsity $s$ of $\hat{x}$ by interpreting an entry as non-zero if its absolute value is greater than $10^{-5}$. The values $\alpha_0$ and $\beta_0$ are chosen empirically as estimates of the true sharpness constants $\alpha$ and $\beta$, respectively.

\subsubsection{Results}

\begin{figure}
\centering
\begin{minipage}[b]{1\textwidth}
\centering
\begin{overpic}[width=0.32\textwidth,trim={0mm 0mm 0mm 0mm},clip]{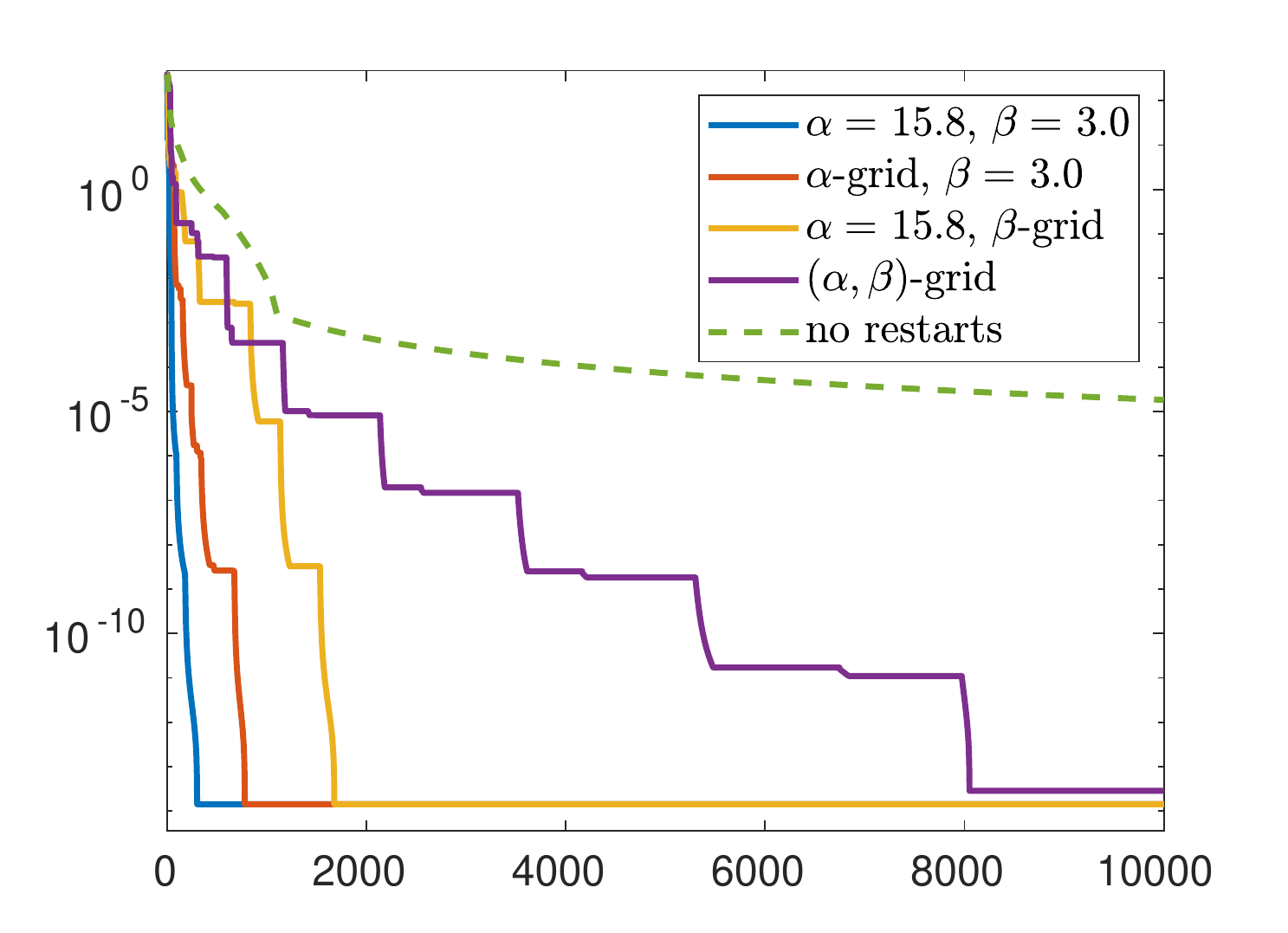}
\put (20,-4) {\small Total inner iterations $t$}
\put (-5,25) {\small\rotatebox{90}{$f(x_t) - \hat{f}$}}
\put (45,72) {\small\texttt{wine}}
\end{overpic}
\begin{overpic}[width=0.32\textwidth,trim={0mm 0mm 0mm 0mm},clip]{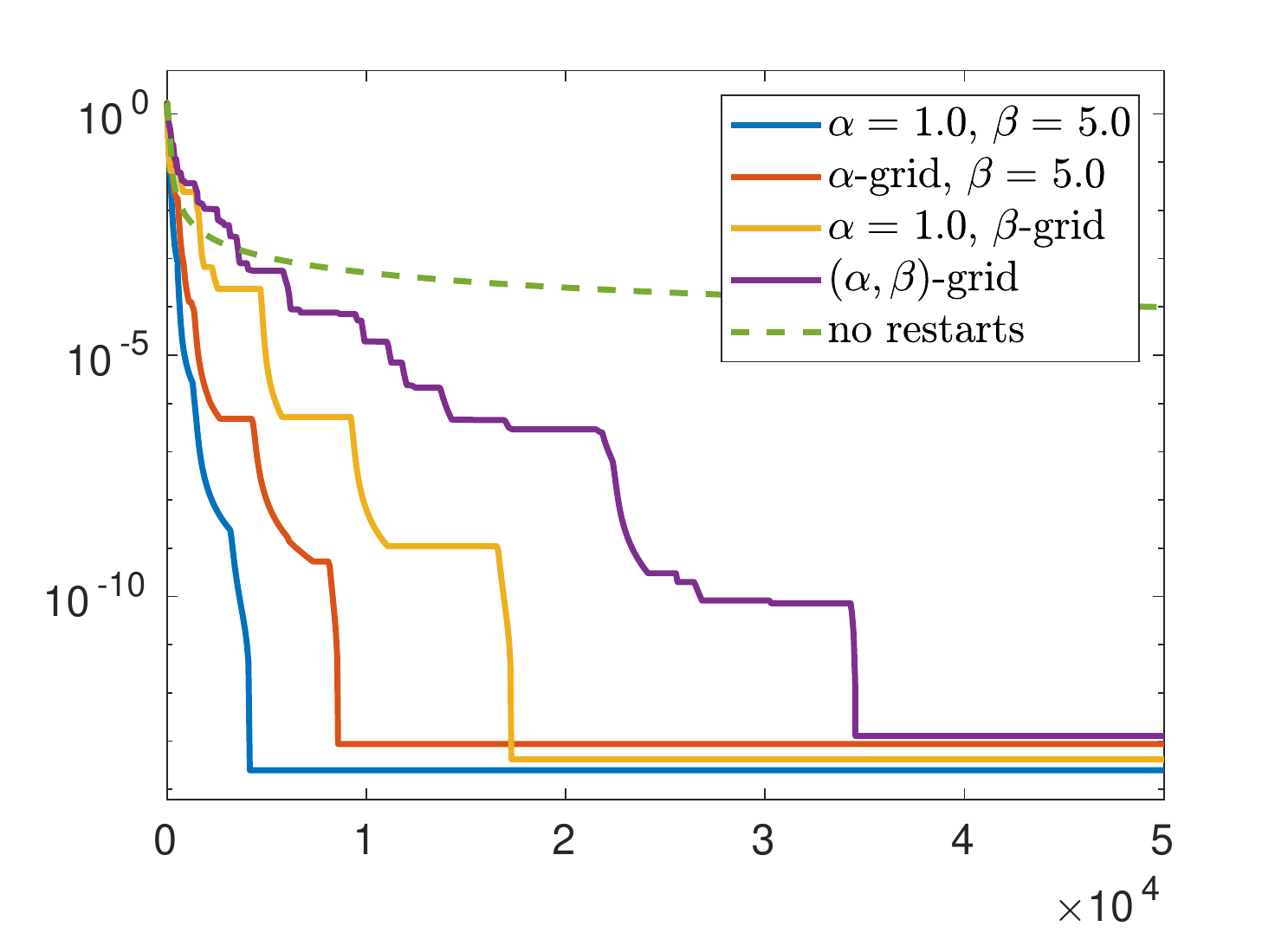}
\put (20,-4) {\small Total inner iterations $t$}
\put (-5,25) {\small\rotatebox{90}{$f(x_t) - \hat{f}$}}
\put (47,72) {\small\texttt{cc}}
\end{overpic}
\begin{overpic}[width=0.32\textwidth,trim={0mm 0mm 0mm 0mm},clip]{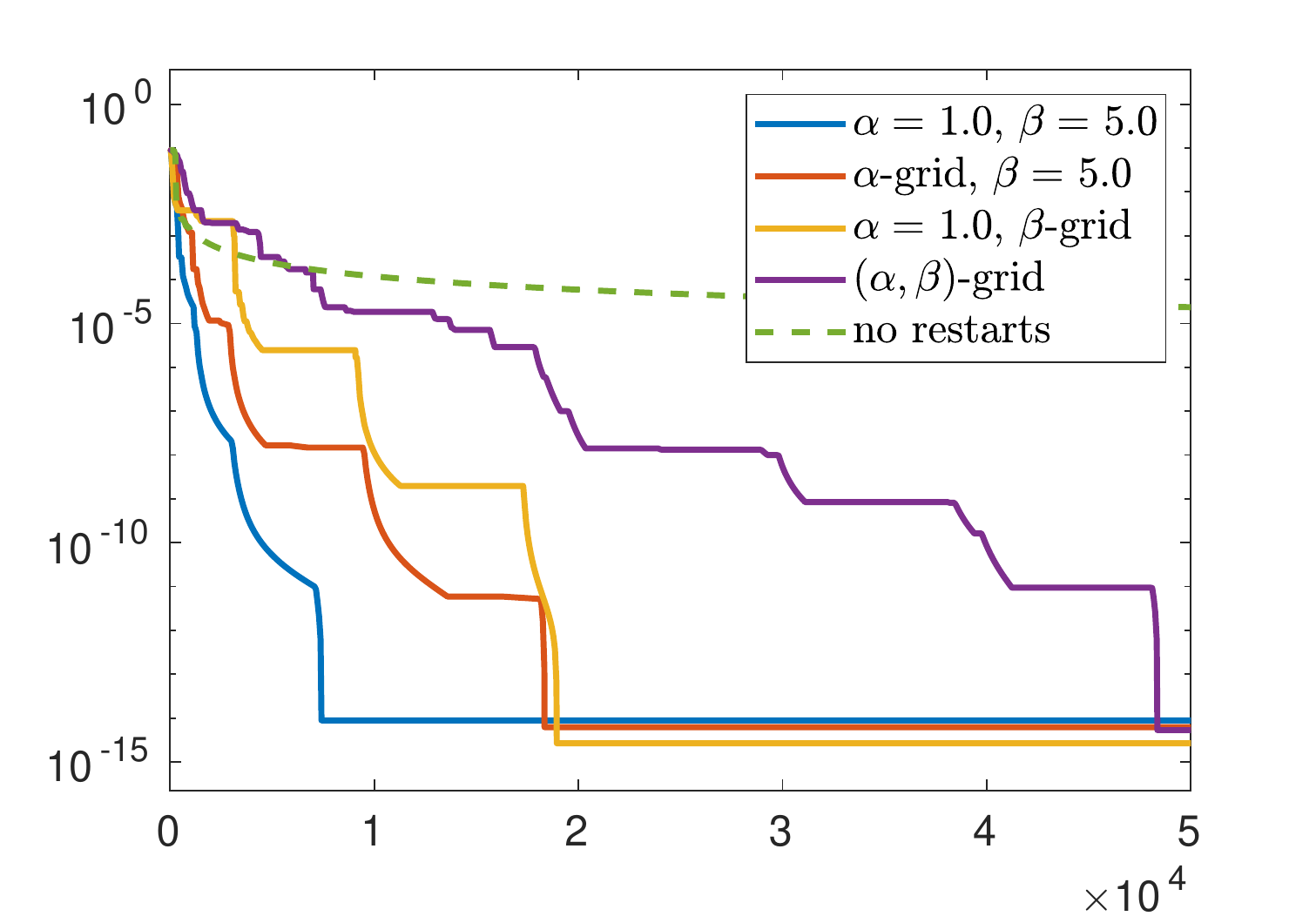}
\put (20,-4) {\small Total inner iterations $t$}
\put (-5,25) {\small\rotatebox{90}{$f(x_t) - \hat{f}$}}
\put (46,72) {\small\texttt{leu}}
\end{overpic}
\end{minipage}\vspace{2mm}
\caption{Objective error versus the total inner iteration of various (restarted and non-restarted) schemes of primal-dual iteration for SR-LASSO. The plots correspond to three different datasets.}
\label{fig:SRLASSO-data}
\end{figure}

\begin{figure}
\centering
\begin{minipage}[b]{1\textwidth}
\centering
\begin{overpic}[width=0.32\textwidth,trim={0mm 0mm 0mm 0mm},clip]{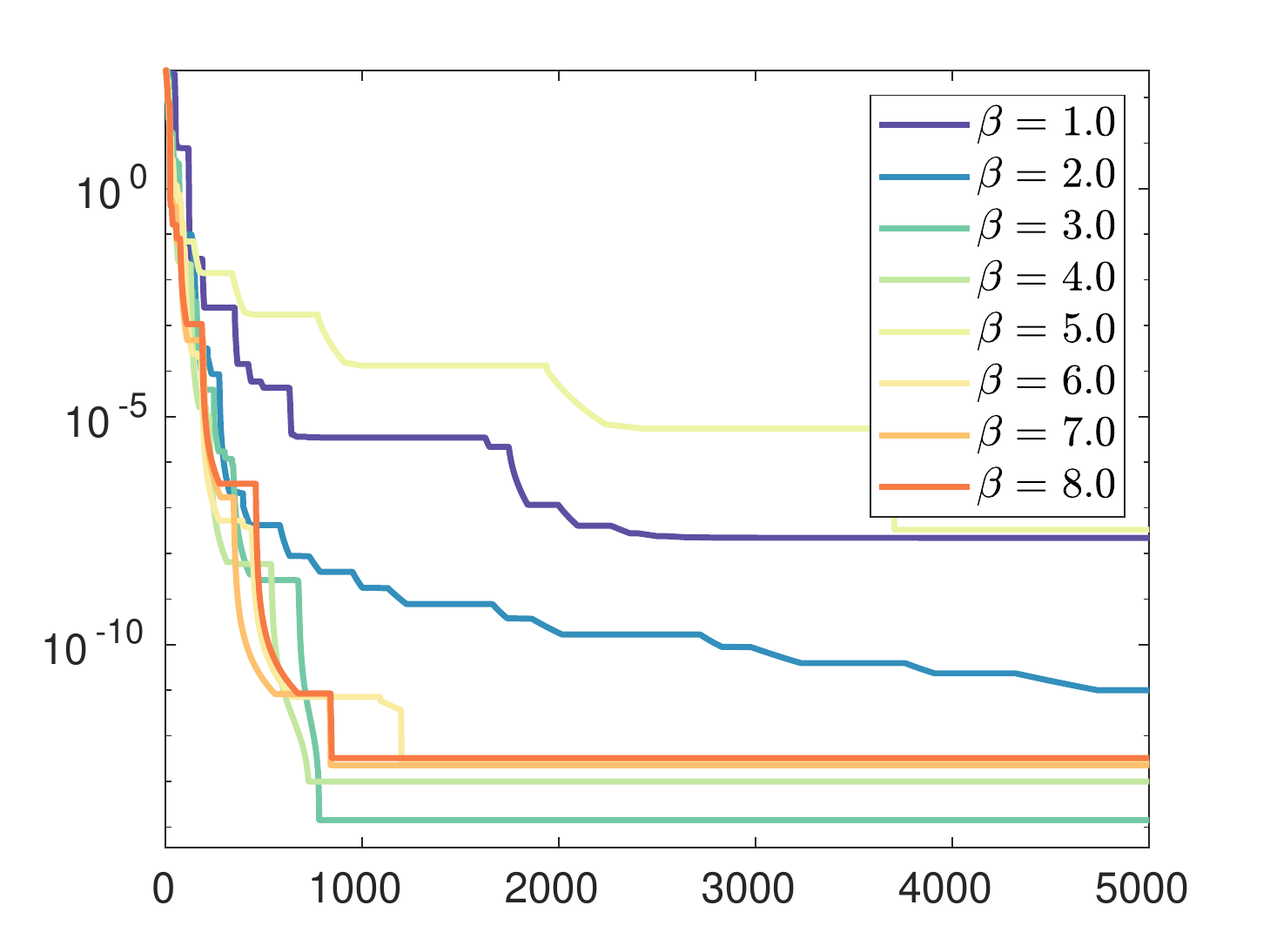}
\put (-5,25) {\small\rotatebox{90}{$f(x_t) - \hat{f}$}}
\put (45,72) {\small\texttt{wine}}
\end{overpic}
\begin{overpic}[width=0.32\textwidth,trim={0mm 0mm 0mm 0mm},clip]{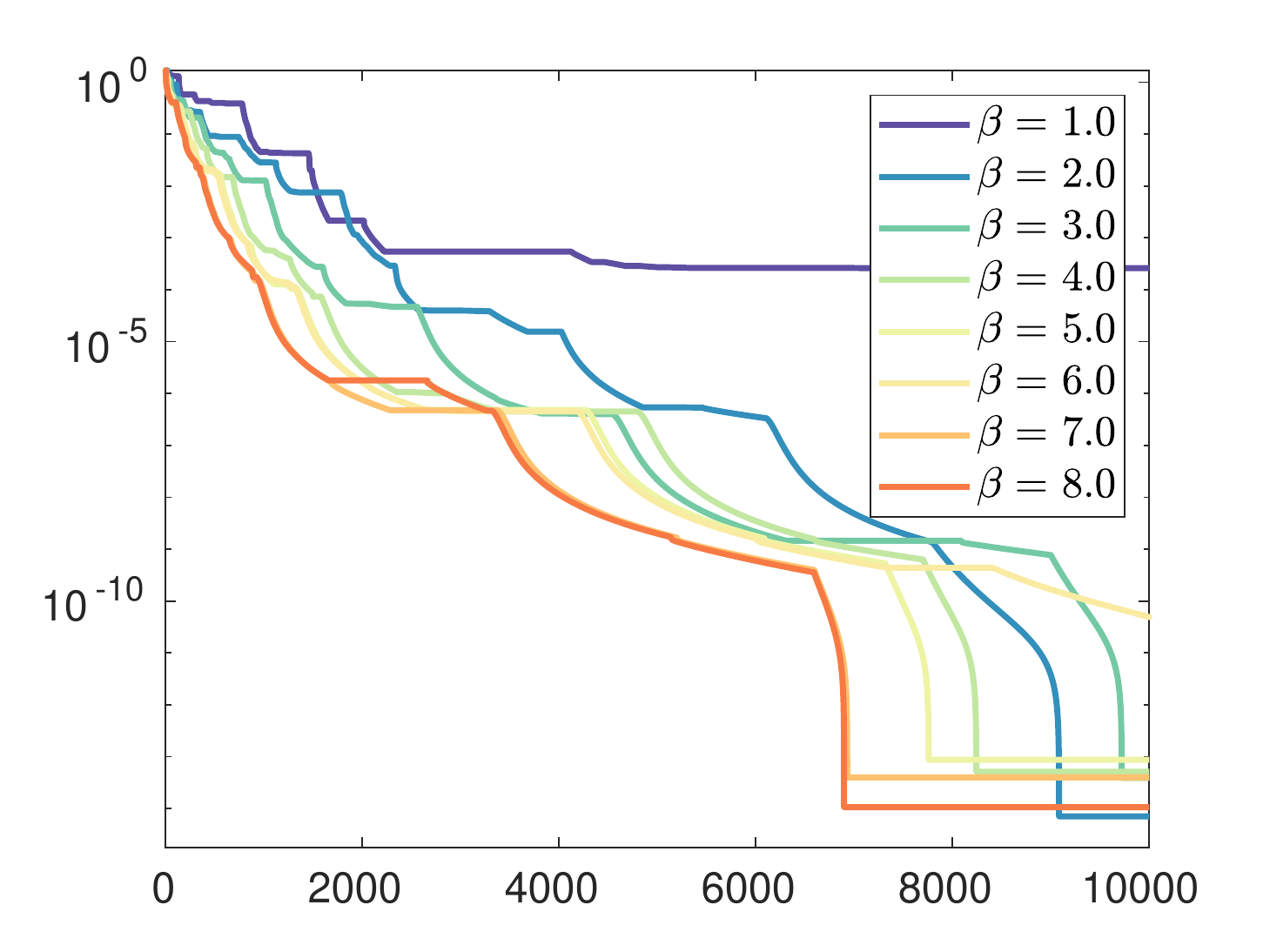}
\put (20,-4) {\small Total inner iterations $t$}
\put (-5,25) {\small\rotatebox{90}{$f(x_t) - \hat{f}$}}
\put (47,72) {\small\texttt{cc}}
\end{overpic}
\begin{overpic}[width=0.32\textwidth,trim={0mm 0mm 0mm 0mm},clip]{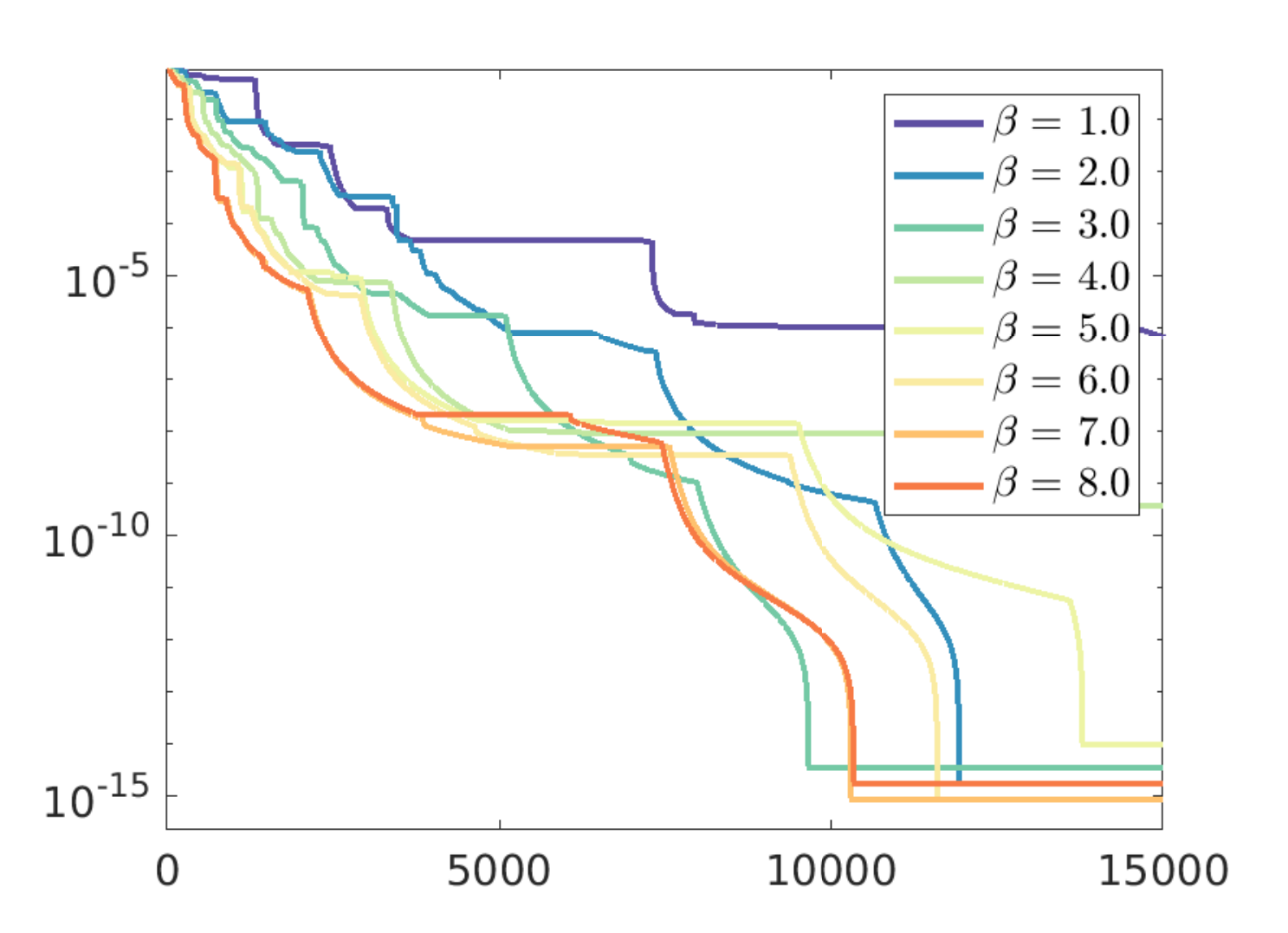}
\put (20,-4) {\small Total inner iterations $t$}
\put (-5,25) {\small\rotatebox{90}{$f(x_t) - \hat{f}$}}
\put (46,72) {\small\texttt{leu}}
\end{overpic}
\end{minipage}\vspace{2mm}
\caption{Objective error versus the total inner iteration of restarted primal-dual iteration for SR-LASSO. The plots correspond to a grid search over $\alpha$ with various fixed $\beta$ for three different datasets.}
\label{fig:SRLASSO-beta}
\end{figure}

\cref{fig:SRLASSO-data} shows the performance of various restart schemes for this problem on the three datasets. In all cases, the restarted schemes outperform the non-restarted scheme. The suitable values of $\alpha$ and $\beta$ differ significantly across the datasets, indicating that the optimal sharpness parameters are problem-dependent. This is further demonstrated in \cref{fig:SRLASSO-beta}, where we show the restart scheme for various fixed $\beta$ and grid search over $\alpha$ - the restart schemes with choices of $\beta > 1$ outperform the schemes that use $\beta = 1$. This contrasts the sparse recovery example, where theory and experiment suggest $\beta = 1$ as a good choice. This phenomenon is unsurprising since the approximate sharpness condition (see \eqref{eqn:sharpness}) for this problem is expected to depend highly on the data. Nonetheless, using our grid search scheme, we obviate the need to estimate or tune these parameters.

\subsection{Comparison with the restart scheme of \cite{renegar2021simple}}
\label{sec:comparison_sec}

Finally, we compare our restart schemes with the scheme introduced in \cite{renegar2021simple}. Specifically, we consider their scheme \texttt{Sync||FOM} where every first-order method instance broadcasts its current iterate to other instances. The variant produces the best numerical results in \cite{renegar2021simple}.
The comparison is drawn using the sparse recovery problem of \cref{sec:ne-sparse-recovery-via-QCBP}.
However, \texttt{Sync||FOM} and other restart schemes in \cite{renegar2021simple} are limited to first-order methods that can only produce feasible iterates, so they cannot be used with the primal-dual iteration, as was done in \cref{sec:ne-sparse-recovery-via-QCBP}.
To proceed, we slightly modify the sparse recovery problem so that NESTA, as in \cref{sec:ne-tv-minimization}, can be used instead.

Consider the sparse recovery problem from \cref{sec:ne-sparse-recovery-via-QCBP}, but where the measurement matrix $A \in \bbC^{m \times n}$ is now a subsampled Fourier matrix.
Specifically, $A$ has the form $A = m^{-1/2} P_\Omega F$ where $F \in \bbC^{n \times n}$ is the 1-D discrete Fourier transform and $\Omega \subseteq [n]$ is a sampling mask with $\abs{\Omega} = m$.
We construct the mask $\Omega$ by including each row as an i.i.d.\ Bernoulli random variable with probability of success equal to $m/n$. The expected value of $\abs{\Omega}$ is $m$.
The matrix $A$ can then be shown to have the rNSP with high probability (see, for instance, \cite[Lem.\ 3.2.1]{neyra-nesterenko2023unrolled}).
In turn, the approximate sharpness condition \eqref{eqn:sharpness} holds for the modified sparse recovery problem with high probability.

\subsubsection{Experimental setup}

Using NESTA from \cref{sec:ne-tv-minimization} to solve QCBP (\cref{sec:ne-sparse-recovery-via-QCBP}), we set $W = I$.
The subsampled Fourier matrix $A \in \bbC^{m \times n}$ satisfies the orthonormal row condition with $AA^* = (\sqrt{n}/m) I$.
This is enough for NESTA to be applicable.
The parameters used are ambient dimension $n = 128$, sparsity level $s = 15$, number of measurements $m = 60$, and noise level $\varsigma = 10^{-6}$.
The ground truth vector $x$ is sparse with $s$ of its entries randomly selected as i.i.d.\ standard normal entries. The noise vector $e$ is selected uniformly random on the $\ell^2$-ball of radius $\varsigma$ and so $\nmu{e}_{\ell^2} = \varsigma$.

In terms of restart schemes developed in this paper, the objective function is $f(x) = \nmu{x}_{\ell^1}$ and the feasibility gap can be set as $g_Q \equiv 0$ since NESTA always produces feasible iterates.
Again, the smoothing parameters $\mu$ are changed directly by the restarting procedure and explicitly depend on $\epsilon_{i,j,U}$. 
Lastly, $\alpha_0 = \sqrt{m}$, $\beta_0 = 1$. The choice of $\alpha_0$ is motivated by \cref{prop:sharpness_of_QCBP} as before.

Regarding \texttt{Sync||FOM}, the code was transcribed from the Julia implementation in \cite{renegar2021simple} into MATLAB. 
The objective error tolerance $\epsilon$ is a parameter in this scheme, specifically for the number of parallel instances created, equal to $N = \max(2, \log_2(1/\epsilon))$.
The smoothing parameters $\{\mu_k\}$ corresponding to $k = 1, \dots, N$ instances depend on each instance's tolerance $\epsilon_k$, where $\mu_k = \epsilon_k / n$ so that instance $k$ can achieve an objective error within $\epsilon_k$. The specific choice of $\mu_k$ is informed by \cref{prop:nesterov-smoothing-Gamma}.
Moreover, we track reconstruction and objective errors using the first-order method's iterations.
Specifically, each time \texttt{Sync||FOM} calls the first-order method, the iterate returned is kept (and the errors are computed) if it produces a lower objective function value than the previously kept iterate. Otherwise, the previous iterate is used to compute the errors.

\subsubsection{Results}

\begin{figure}
\centering
\begin{minipage}[b]{1\textwidth}
\centering
\begin{overpic}[width=0.49\textwidth,trim={0mm 0mm 0mm 0mm},clip]{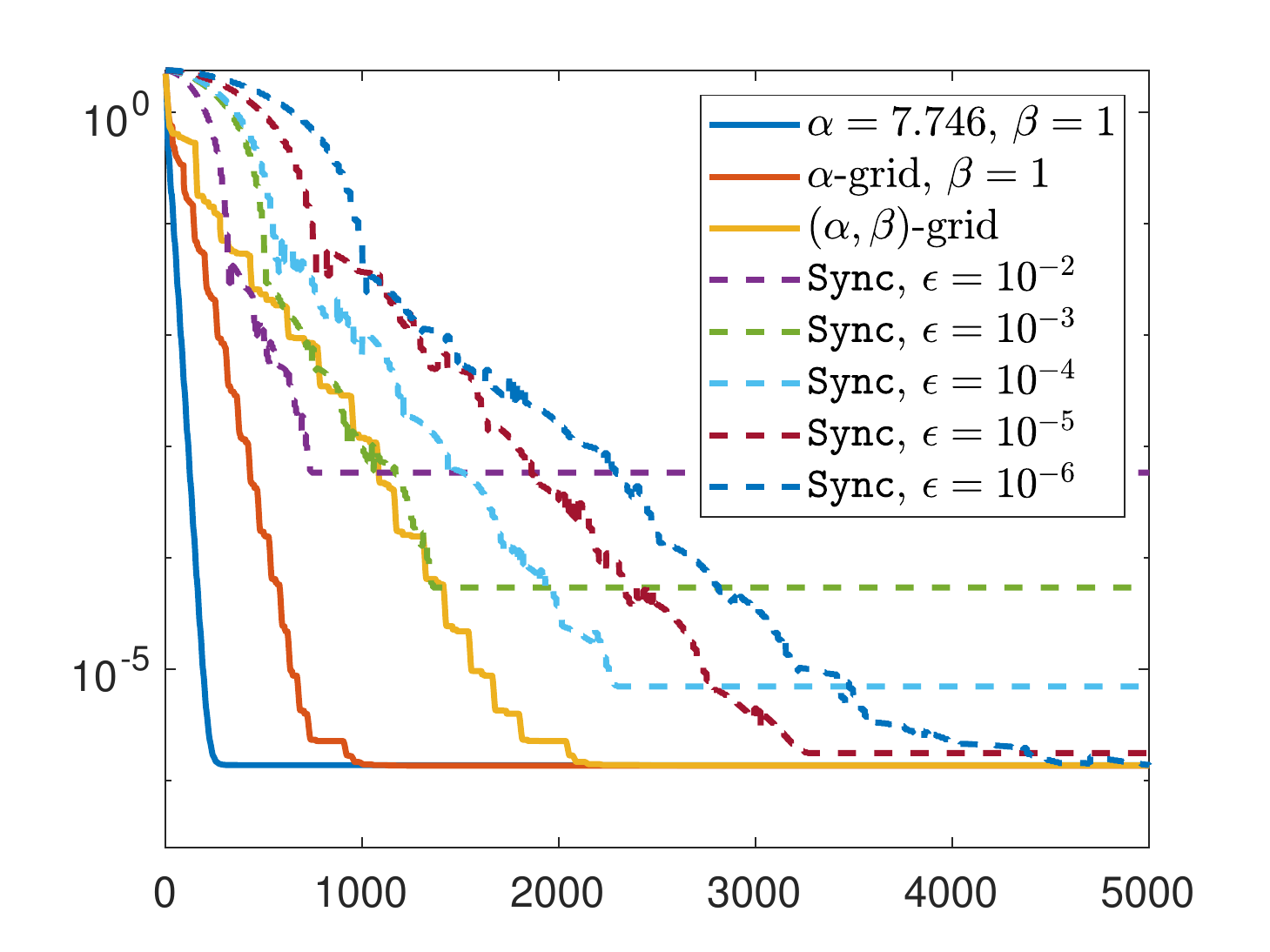}
\put (29,-2) {\small Total inner iterations $t$}
		\put (-1,27) {\small\rotatebox{90}{$\nmu{x_t - x}_{\ell^2}$}}
   \end{overpic}
\begin{overpic}[width=0.49\textwidth,trim={0mm 0mm 0mm 0mm},clip]{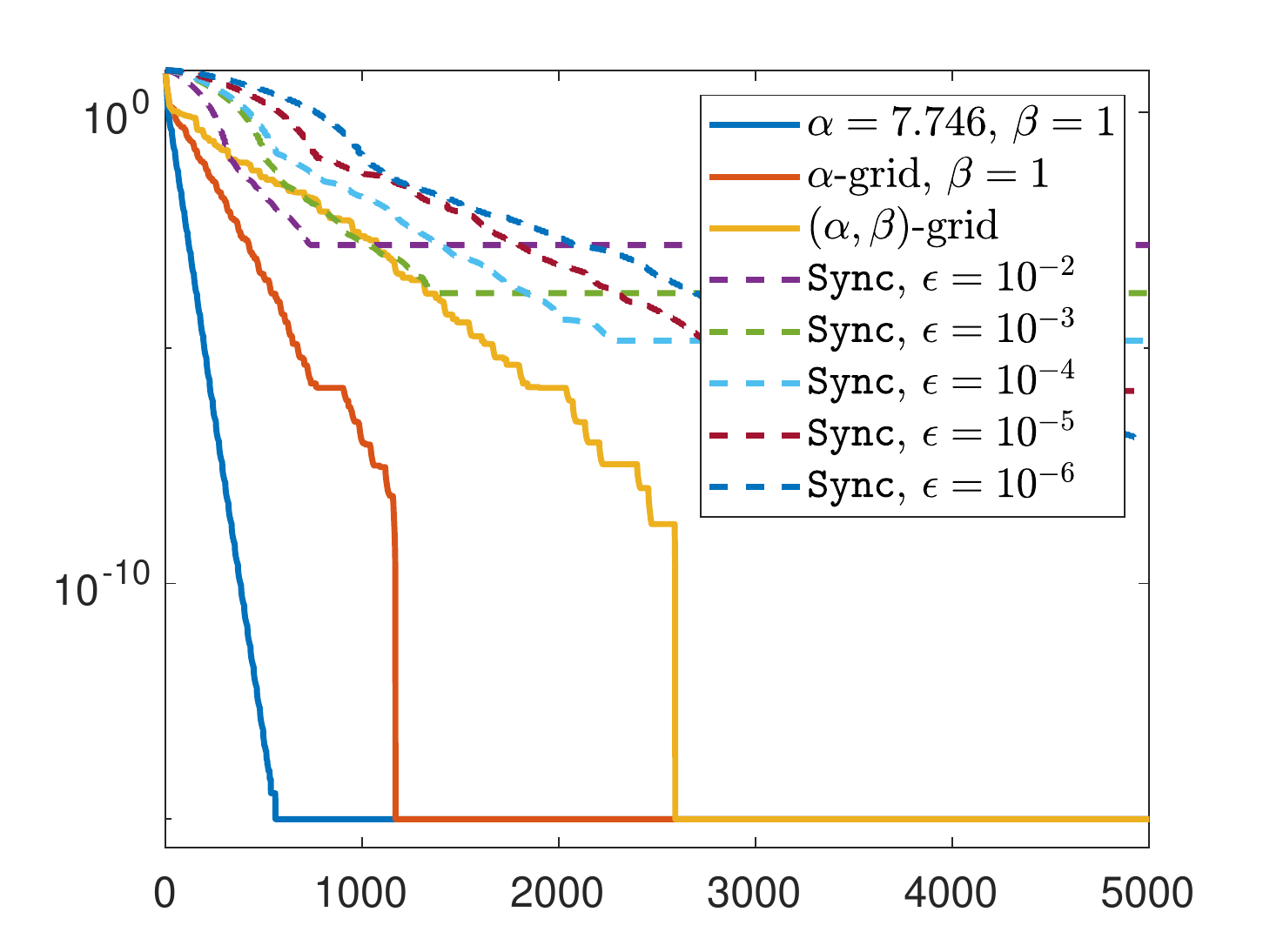}
\put (29,-2) {\small Total inner iterations $t$}
		\put (-1,27) {\small\rotatebox{90}{$f(x_t) - \hat{f}$}}
   \end{overpic}
\end{minipage}
\caption{Reconstruction error (left) and objective error (right) of restarted NESTA for QCBP with $\varsigma = 10^{-6}$. Various restart schemes are used to compare with Renegar and Grimmer's \texttt{Sync||FOM} restart scheme.}
\label{fig:RG-comparison}
\end{figure}

\begin{figure}
\centering
\begin{minipage}[b]{1\textwidth}
\centering
\begin{overpic}[width=0.49\textwidth,trim={0mm 0mm 0mm 0mm},clip]{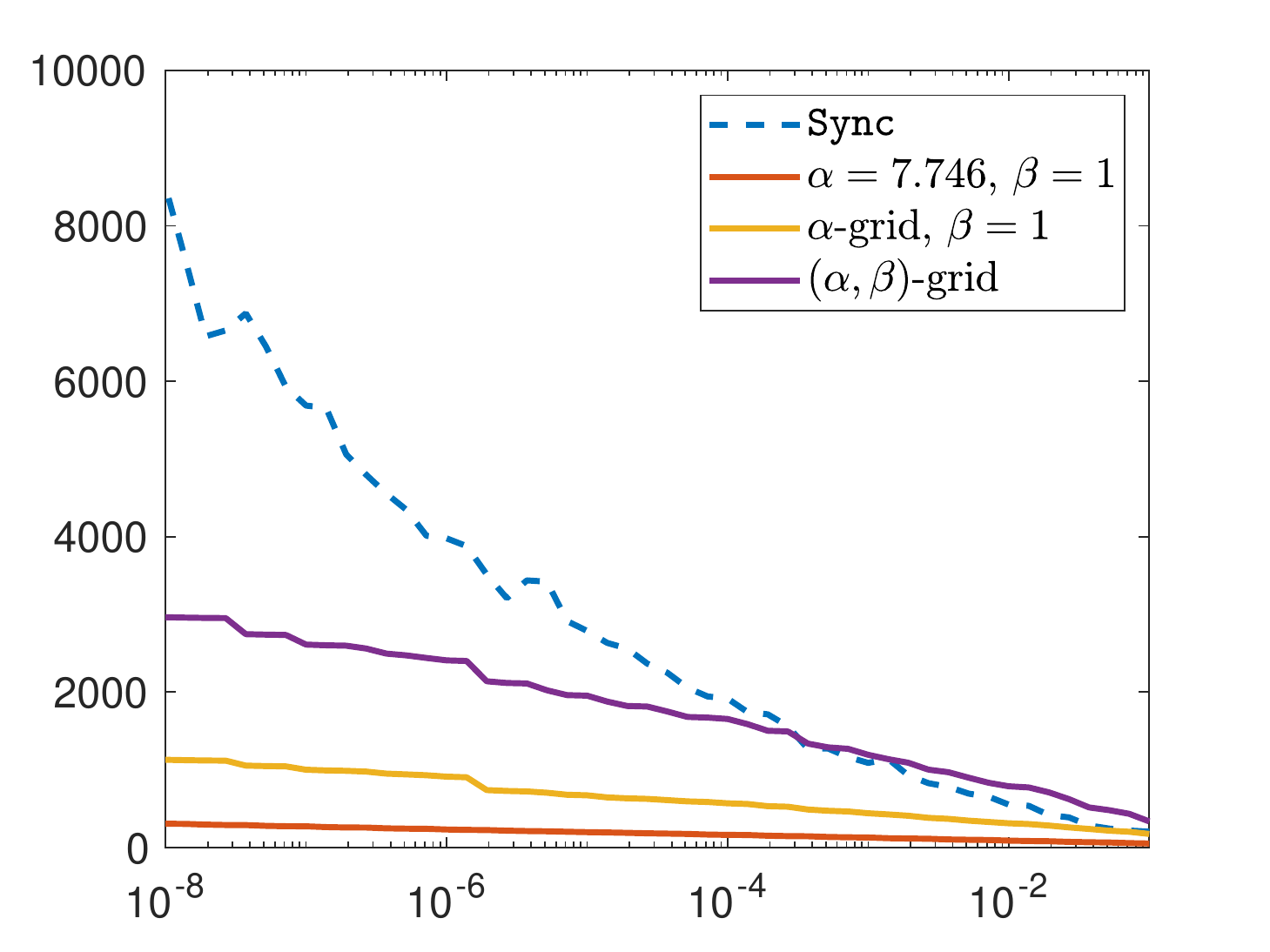}
\put (50,-2) {$\epsilon$}
        \put (-1,17) {\small\rotatebox{90}{\small Total inner iterations $t$}}
    \end{overpic}
\end{minipage}
\caption{Number of inner iterations needed to reach $\epsilon$ objective error using restarted NESTA for QCBP with $\varsigma = 10^{-6}$. Various restart schemes and Renegar and Grimmer's \texttt{Sync||FOM} are compared.}
\label{fig:RG-iters-vs-epsilon}
\end{figure}

\cref{fig:RG-comparison} shows the results of the experiment, where we compare the performance of several of our restart schemes and \texttt{Sync||FOM} with tolerance levels $\epsilon = 10^{-i}$, $i \in \{2,3,4,5,6\}$. Performance is measured in terms of reconstruction error and objective error versus total iteration.

As is evident from this figure, our restart schemes with fixed (optimized) $(\alpha,\beta)$ and with a grid search over $\alpha$ (with $\beta = 1$, following the theory of Section \ref{sec:ne-sparse-recovery-via-QCBP}) both outperform \texttt{Sync||FOM} in reconstruction speed and achieving reconstruction up to the tolerance $\eta$, a quantity proportional to the noise level $\varsigma$. The grid search over $\alpha$ and $\beta$ also achieves this tolerance and manages to do so more quickly than \texttt{Sync||FOM} instances that achieve the same tolerance.
\texttt{Sync||FOM} decreases in performance but achieves a smaller limiting tolerance as $\epsilon$ is made smaller.
This indicates an optimal choice of $\epsilon$ depending on the tolerance $\eta$, which is generally unknown.
A key advantage of our restart schemes is that they do not require knowledge of $\eta$.
Note that similar remarks can be made when examining objective error.
In addition, we expect that after enough iterations \texttt{Sync||FOM} with parameter $\epsilon$ achieves an objective error within $\epsilon$. This is precisely what is observed in \cref{fig:RG-comparison} (right).

Lastly, \cref{fig:RG-iters-vs-epsilon} compares how many inner iterations are needed for each restart scheme to achieve an objective error of $\epsilon$. The QCBP problem falls within row 2 of Table \ref{rates_table} with $\beta = 1$. Hence, our scheme should achieve an error of $\epsilon$ using $\ord{\log(1/\epsilon)}$ iterations. We see exactly this behavior for all three versions of our scheme. Conversely, for \texttt{Sync||FOM}, the number of iterations scales like $\log^2(1/\epsilon)$. This is exactly as shown in \cite{renegar2021simple} (see Corollary 5 and the following discussion), which is worse than the performance of our schemes.

\section{Conclusion}\label{sec:conclusion}

We have developed a framework that accelerates first-order methods under approximate sharpness conditions. These conditions generalize traditional sharpness definitions by incorporating an unknown constant perturbation into the objective error, offering greater robustness (e.g., to noise or model classes). Our scheme achieves optimal convergence rates for a wide variety of problems without requiring prior knowledge of the constants appearing in \eqref{eqn:sharpness}. Additionally, our method does not necessitate that the first-order methods produce feasible iterates, adding a layer of flexibility beneficial for techniques such as primal-dual iterations. Our numerical experiments demonstrate that our schemes are practical and often result in significant performance enhancements compared to non-restarted schemes or restart schemes with suboptimal parameter selections.

There are numerous possible avenues for future research and extensions of our framework. One potential area involves adapting the metric in \eqref{eqn:sharpness} to a Bregman distance and acceleration for convex optimization problems within Banach spaces. Another exciting direction is the application of our methods to non-convex bilevel optimization schemes. Additionally, developing a generic framework that utilizes sharpness-type bounds for saddle point problems presents an interesting challenge. For instance, in saddle-point problems such as \eqref{PD_example_1_2} and \eqref{PD_example_1_4}, it might be feasible to design restart schemes that use primal-dual gaps instead of $f(x) - \hat{f}$ in \eqref{eqn:sharpness} - see \cite{applegate2022faster} and \cite{fercoq2022quadratic} for primal-dual gap sharpness and restart schemes in the cases of $\beta=1$ and $\beta=2$, respectively. See also \cite{kerdreux2022restarting,kerdreux2019restarting} for recent work on restarts based on gap functions for Frank-Wolfe algorithms. Another promising research area involves linking $d_1$ and $d_2$ to rates and condition numbers in scenarios of approximate sharpness, a topic previously explored for sharpness in \cite{roulet2020sharpness}. Lastly, we envision extending our restart schemes to accommodate stochastic first-order methods, which could significantly impact larger-scale machine-learning problems.

\appendix

\section{Further optimal choices of parameters}

In this appendix, we derive optimal choices of parameters for our algorithms.

\subsection{The optimal choice of $r$ in \cref{alg:restart-known-consts}}
\label{sec:bestr}

Suppose that $d_2= d_1/\beta$ and
$$
\left\lceil\frac{\log(\epsilon_0 /\varepsilon)}{\log(1/r)} \right\rceil\leq 2\frac{\log(\epsilon_0 /\varepsilon)}{\log(1/r)}.
$$
Using this new bound instead, the total number of iterations $T$ performed by $\Gamma$ is bounded by
$$
T\leq \left\lceil\frac{\log(\epsilon_0 /\varepsilon)}{\log(1/r)} \right\rceil+\frac{C2^{d_1/\beta+1}}{\alpha^{d_1/\beta} }{\log(\epsilon_0 /\varepsilon)} \frac{r^{-d_2}}{\log(1/r)}.
$$
Hence $T$ is bounded by an $\varepsilon$-dependent constant times $r^{-d_2}/\log(1/r)$, which can be minimized analytically by choosing $r = \E^{-1/d_2}$. The optimal $r$ here does not depend on the approximate sharpness constants. Therefore, one has
\eas{
T \leq \left\lceil{d_2\log(\epsilon_0 /\varepsilon)} \right\rceil+\frac{C \E d_2 2^{d_1/\beta+1}}{\alpha^{d_1/\beta} }{\log(\epsilon_0 /\varepsilon)}
}
This is meaningful in choosing one less parameter, namely $r$ for \cref{alg:restart-known-consts}.

An optimal value of $r$ can also be found for the case $d_2>d_1/\beta$. However, this optimal value depends on $\varepsilon$ in a somewhat complicated manner. In the limit $\varepsilon\downarrow 0$, the optimal choice is
$$
r=\left(\frac{d_2}{2d_2-d_1/\beta}\right)^{\frac{1}{d_2-d_1/\beta}},
$$
which does depend on the sharpness constant $\beta$. As $d_2-d_1/\beta\downarrow 0$, this choice converges to the choice $r = \E^{-1/d_2}$, that is obtained when $d_2=d_1/\beta$. Similarly, if $d_2<d_1/\beta$, then the optimal choice depends on $\varepsilon$ in a complicated manner but converges to the choice $r = \E^{-1/d_2}$ as $d_2-d_1/\beta\uparrow 0$.

In any of these cases, the same argument for optimal $r$ applies to the algorithms in \cref{sec:restart-unknown-constants}. In the case that $\beta$ is unknown, we recommend the choice $r = \E^{-1/d_2}$.

\subsection{How to choose $a$, $b$}\label{rem:choice-of-ab}
In the case of \cref{known_beta} and assuming \eqref{growth_cond_c}, we can select an optimal value of $a$. From \cref{known_beta} and $\alpha_*\geq \alpha/a$, the part of $\tau$ that depends on $a$ is bounded by $\mathcal{O}({(| \lfloor \log_a (\alpha/\alpha_0) \rfloor | + 1)^{c_1} a^{d_1/\beta}})$. We can upper bound this further by dropping the floor function and, then, dropping the $+1$ in brackets. We are then led to minimizing
$$
|\log_a (\alpha/\alpha_0)|^{c_1}a^{d_1/\beta}=|\log(\alpha/\alpha_0)|^{c_1}a^{d_1/\beta}/\log(a)^{c_1}.
$$
Under these assumptions, the optimal value of $a$ is $\E^{c_1\beta/d_1}$. In the case of \cref{known_alpha}, there is no clear optimal choice for $b$ since the optimal choice is $\varepsilon$-dependent.

\subsection{How to choose $c_1$, $c_2$}
For \cref{known_beta,rad_ord_cor1}, an optimal choice of $c_1 > 1$ exists, but it depends on the unknown parameter $\alpha$. To see this, minimize the lower bound of $t$ in the aforementioned corollaries with respect to $c_1$, noting that the only term in $\tau$ that depends on $c_1$ is $(| \lfloor \log_a (\alpha/\alpha_0) \rfloor | + 1)^{c_1}$. Hence, we must minimize
$
(| \lfloor \log_a (\alpha/\alpha_0) \rfloor | + 1)^{c_1}c_1/(c_1-1).
$
Assuming $\alpha_0 \neq \alpha$, differentiating and finding a minima gives 
$$c_1 = \frac{1 + \sqrt{1 + \frac{4}{\log \left(| \lfloor \log_a (\alpha/\alpha_0) \rfloor | + 1 \right) }}}{2}.$$
By the same reasoning, for \cref{known_alpha,rad_ord_cor1} and $\beta_0 \neq \beta$, the optimal choice of $c_2 > 1$ depends on the unknown parameter $\beta$ and is given by
$$c_2 = \frac{1 + \sqrt{1 + \frac{4}{\log \left(| \lceil \log_b (\beta/\beta_0) \rceil | + 1 \right) }}}{2}.$$
Intuitively, if $\alpha_0$ is far from $\alpha$ then $c_1$ should be closer to $1$, and similarly for $\beta_0$ and $\beta$ regarding $c_2$. Without prior knowledge, we recommend a sensible default such as $c_1 = c_2 = 2$.

\section{Miscellaneous proofs}

In this appendix, we prove several results that were stated in \cref{sec:FOM_examples}.

\subsection{Nesterov's method with smoothing}
\label{sec_append_nestarov}

\begin{proof}[Proof of \cref{lem:nesterovs-thm-smoothable-f}]
Applying \cref{lem:Nesterov's theorem} with the function $f_\mu$ and using the second part of \cref{def:uv-smoothable} gives
$$f_\mu (x_k) - f_\mu (x) \leq \frac{4 u p(x;x_0)}{\mu k(k+1) \sigma_p}.$$
Now, using both inequalities in the first part of \cref{def:uv-smoothable} gives the result.
\end{proof}

\begin{proof}[Proof of \cref{prop:nesterov-smoothing-Gamma}]
Suppose that $x_0 \in Q$ with $d(x_0, \widehat{X}) \leq \delta$. Then by \cref{lem:nesterovs-thm-smoothable-f} with $\hat{x} \in \widehat{X} \subseteq Q$, we have
$$f(x_N) - \hat{f} \leq \frac{4 u p(\hat{x};x_0)}{ \mu N(N+1) \sigma_p} + v \mu . $$
Using $\frac{1}{N(N+1)} \leq \frac{1}{N^2}$, $\sigma_p = 1$ and $p(\hat{x}) \leq \frac{1}{2} \delta^2$ by choice of $p$, we get
$$
f(x_N) - \hat{f} \leq \frac{2 u \delta^2 } {\mu N^2} + v \mu.
$$
Substituting $\mu = \frac{\epsilon}{2 v}$ and using that $N \geq 2 \sqrt{2uv} \cdot \frac{\delta}{\epsilon}$ gives the result.
\end{proof}

\subsection{Primal-dual iterations for unconstrained problems}
\label{proofs_PD_appendix}

\begin{proof}[Proof of \cref{PD_lemma_1}]
We use \eqref{saddle_bound_WARPd1} and prove bounds on each of the terms on the left-hand side.
First, we have
$$
\mathcal{L}\left(X_k,y\right)=\langle BX_k,y\rangle_{\mathbb{R}}+q(X_k)+g(X_k)-h^*(y).
$$
Since $h$ is convex and lower semicontinuous, $h^{**}=h$. It follows that
$$
h(BX_k)=\max_{y\in\mathbb{C}^{m}} \langle BX_k,y\rangle_{\mathbb{R}}-h^*(y)=-\min_{y\in\mathbb{C}^{m}}(h^*(y)-\langle BX_k,y\rangle_{\mathbb{R}}).
$$
The objective function is convex and lower semicontinuous, and the set of minimizers is $y$ such that
$$
0 \in \partial \left ( h^*(\cdot) - \ip{\cdot}{B X_k} \right )(y) = \partial h^*(y) - B X_k.
$$
Rearranging and using the Legendre--Fenchel identity, we deduce that this set of minimizers is precisely $\partial h(B X_k)$. It follows that
\begin{equation}
\label{bdbd1}
\mathcal{L}\left(X_k,y\right)=f(X_k), \qquad \forall y \in \partial h(B X_k).
\end{equation}
Second, we have
$$
\mathcal{L}\left(x,Y_k\right)=\langle Bx,Y_k\rangle_{\mathbb{R}}+q(x)+g(x)-h^*(Y_k).
$$
The above argument shows that
$$
h(Bx)=\max_{y\in\mathbb{C}^{m}} \langle Bx,y\rangle_{\mathbb{R}}-h^*(y)\geq \langle Bx,Y_k\rangle_{\mathbb{R}}-h^*(Y_k).
$$
It follows that
\begin{equation}
\label{bdbd2}
\mathcal{L}\left(x,Y_k\right)\leq f(x).
\end{equation}
The bound \eqref{objective_bound_WARPd1} now follows by combining \eqref{bdbd1} and \eqref{bdbd2}.
\end{proof}

\begin{proof}[Proof of \cref{PDGH_prop_1}]
First, consider general $\tau,\sigma>0$ with $\tau(\sigma L_B^2+L_q)= 1$. For input $x_0$ with $d(x_0,\widehat{X})\leq\delta$, \eqref{PDGH_prop_1_assumption} and \eqref{objective_bound_WARPd2} imply that for $x\in \widehat{X}$,
$$
f(X_N)-\hat{f}\leq \frac{1}{N}\left(\frac{\delta^2}{\tau}+\frac{L_h^2}{\sigma}\right)=\frac{1}{N}\left(\sigma\delta^2L_B^2+\frac{L_h^2}{\sigma}+\delta^2L_q\right).
$$
Choosing the step size $\sigma>0$ to minimize the right-hand side leads to
$$
\sigma=\frac{L_h}{\delta L_B},\quad \tau=\frac{\delta}{L_BL_h+\delta L_q},\quad f(X_N)-\hat{f}\leq \frac{\delta}{N}\left(2L_BL_h+\delta L_q\right).
$$
Equations \eqref{PD_fits1} and \eqref{PD_cost1} now follow by taking $N=\left \lceil \frac{\delta}{\epsilon}\left(2L_BL_h+\delta L_q\right) \right \rceil.$ 
\end{proof}

\subsection{Primal-dual iterations for constrained problems}
\label{proofs_PD_appendix2}

\begin{proof}[Proof of \cref{PD_lemma_2}]
Our proof is similar to the technique in \cite{gao2019randomized}. Using the same arguments as the proof of \cref{PD_lemma_1}, \eqref{saddle_bound_WARPd1b} implies that for $y_2^{(0)}=0$,
\begin{align*}
&f(X_k)-f(x)+\langle AX_k,y_2\rangle_{\mathbb{R}}-\sup_{z\in C}\langle z,y_2\rangle_{\mathbb{R}}-\langle Ax,[Y_k]_2\rangle_{\mathbb{R}}+\sup_{z\in C}\langle z,[Y_k]_2\rangle_{\mathbb{R}}\\
&\quad\quad\leq \frac{1}{k}\left(\frac{\nmu{x-x^{(0)}}^2}{\tau}+\frac{\nmu{y_1-y_1^{(0)}}^2}{\sigma_1}+\frac{\nmu{y_2}^2}{\sigma_2}\right), \quad \forall x\in\mathbb{C}^n,\ y_1 \in \partial h(B X_k),\ y_2\in\mathbb{C}^{m'}.
\end{align*}
If $x\in Q$, then
$$
-\langle Ax,[Y_k]_2\rangle_{\mathbb{R}}+\sup_{z\in C}\langle z,[Y_k]_2\rangle_{\mathbb{R}}\geq 0.
$$
Let $\hat{z}\in C$ be of minimal distance to $AX_k$ and let $y_2$ be a multiple of $AX_k-\hat{z}$ such that $y_2$ has norm $\kappa$. Since $C$ is convex, the following holds \cite[Theorem 6.41]{beck2017first}
$$
\langle z,y_2\rangle_{\mathbb{R}}\leq \langle \hat{z},y_2\rangle_{\mathbb{R}},\quad \forall z\in C.
$$
It follows that
$$
\langle AX_k,y_2\rangle_{\mathbb{R}}-\sup_{z\in C}\langle z,y_2\rangle_{\mathbb{R}}\geq \langle AX_k-\hat{z},y_2\rangle_{\mathbb{R}}=\kappa\cdot\inf_{z\in C} \nm{AX_k-z}=g_Q(\kappa;X_k).
$$
Combining the inequalities yields \eqref{objective_bound_WARPd1b}.
\end{proof}

\begin{proof}[Proof of \cref{PDGH_prop_2}]
First, consider general $\tau,\sigma_1,\sigma_2>0$ with $\tau(\sigma_1 L_B^2+\sigma_2 L_A^2+L_q)=1$. For input $x_0$ with $d(x_0,\widehat{X})\leq\delta$, we argue as in the proof of \cref{PDGH_prop_1} (but now using \cref{PD_lemma_2}) to obtain
\begin{equation}
\label{objective_bound_WARPd3b}
f(X_N)-\hat{f}+g_Q(\kappa;X_N)\leq \frac{1}{N}\left(\frac{\delta^2}{\tau}+\frac{L_h^2}{\sigma_1}+\frac{\kappa^2}{\sigma_2}\right)=\frac{1}{N}\left(\sigma_1\delta^2L_B^2+\frac{L_h^2}{\sigma_1}+\sigma_2\delta^2L_A^2+\frac{\kappa^2}{\sigma_2}+\delta^2L_q\right).
\end{equation}
Optimizing the proximal step sizes leads to
$$
\tau=\frac{\delta}{\kappa L_A+L_hL_B+\delta L_q},\quad \sigma_1=\frac{L_h}{\delta L_B},\quad \sigma_2=\frac{\kappa}{\delta L_A}.
$$
Substituting these values into \eqref{objective_bound_WARPd3b} leads to
$$
f(X_N)-\hat{f}+g_Q(X_N)\leq \frac{\delta}{N}\left(2\kappa L_A+2L_hL_B+\delta L_q\right).
$$
The rest of the proof follows the same argument as the proof of \cref{PDGH_prop_1}.
\end{proof}

\linespread{0.89}\selectfont{}
\bibliographystyle{abbrv}
\bibliography{restart_bib}

\end{document}